\documentclass{base_class}

\title[]{Integral Geometry on the Octonionic Plane} 

\author{Jan Kotrbat{\'y}}\thanks{JK was supported by DFG grants WA3510/1-1 and BE 2484/5-2.}
\author{Thomas Wannerer}\thanks{TW was supported by DFG grant WA3510/3-1}
 
\email{kotrbaty@math.uni-frankfurt.de}
\email{thomas.wannerer@uni-jena.de}
\address{Goethe University Frankfurt, Institute of Mathematics, Robert-Mayer-Str. 10, 60629 Frankfurt, Germany}
\address{Friedrich-Schiller-Universit\"at Jena, Fakult\"at f\"ur Mathematik und Informatik, Institut für Mathematik, Ernst-Abbe-Platz 2, 07743 Jena, Germany}
\subjclass[2020]{}
\date{\today}

\begin{document}

\maketitle

\begin{abstract}
	We describe explicitly the algebra of $\Spin(9)$-invariant, translation-invariant, continuous valuations on the octonionic plane.  Namely, we present a basis in terms of invariant differential forms and determine the Bernig--Fu convolution on this space. The main technical ingredient we introduce is an extension of the invariant theory of the Lie group $\Spin(7)$ to the isotropy representation of the action of $\Spin(9)$ on the 15-dimensional sphere, reflecting the underlying octonionic structure. As an application, we compute the principal kinematic formula on the octonionic plane and express in our basis certain $\Spin(9)$-invariant valuations introduced previously by Alesker.
\end{abstract}

\section{Introduction}

 The kinematic formulas of integral geometry have attracted considerable attention throughout the twentieth century, including important contributions by eminent mathematicians such as Blaschke, Chern, and Federer.  The profound connections of this subject with convex geometry were first systematically investigated in  Hadwiger's seminal monograph \emph{Vorlesungen \"uber Inhalt, Oberfl\"ache und Isoperimetrie} \cite{Hadwiger:Vorlesungen}: Characterizing linear combinations of the intrinsic volumes $\mu_0,\dots,\mu_n$ as the only continuous, rigid-motion-invariant valuations on convex bodies (finitely additive functions on the collection of convex compact subsets of Euclidean space),  Hadwiger obtained a strikingly elegant proof  of the  kinematic formulas in Euclidean space:
	\begin{align}
		\label{eq:kf}
		\int_{\b{\SO(n)}}\mu_k(A\cap \b g B)d\b g&=\sum_{i,j=0}^n c_{i,j}^k\mu_i(A)\mu_j(B),
	\end{align}
	where $A,B\subset\RR^n$ are arbitrary convex bodies, $c_{i,j}^k\in\RR$ explicitly known constants, and $d\b g$ a  Haar measure on the group $\b{\SO(n)}=\SO(n)\ltimes\RR^n$ of rigid motions.

The groundbreaking work of Alesker \cite{Alesker:OnConjecture,Alesker:Irreducibility} on valuations on convex bodies deepened the connection between convex and integral geometry 
	by revealing  remarkable algebraic structures  behind kinematic formulas. The prospect of a more structural understanding and the availability of powerful new tools triggered a renewed interest and led to brilliant recent advances in integral geometry.

 A fundamental discovery of  Alesker generalizing Hadwiger's characterization of the intrinsic volumes is that the subspace $\Val^G\subset\Val$ of $G$-invariant valuations is finite-dimensional for any closed subgroup $G\subset\SO(n)$ acting transitively on the sphere $S^{n-1}$. As a consequence, the kinematic formulas \eqref{eq:kf} exist for any such  $G$ instead of $\SO(n)$ with the intrinsic volumes replaced by a basis of $\Val^G$. The classification of groups acting transitively on spheres is a classical result due to Montgomery and Samelson \cite{MontgomerySamelson:Spheres} and Borel \cite{Borel:Transitive}; namely, there are six infinite series
\begin{align*}
	\SO(n),\quad \U(k),\SU(k)\subset\SO(2k), \quad \Sp(k),\Sp(k)\U(1),\Sp(k)\Sp(1)\subset\SO(4k),
\end{align*}
and three exceptions
\begin{align*}
	\G_2\subset\SO(7),\quad \Spin(7)\subset\SO(8), \quad\Spin(9)\subset\SO(16).
\end{align*}
The ultimate goal of understanding the integral geometry in each of these cases (which are evidently related to the four normed division algebra of real and complex numbers, the quaternions, and the octonions), remains, in spite of significant progress, a challenge.

An obvious hurdle in establishing \eqref{eq:kf} for the other groups $G$ is to determine the constants $c_{i,j}^k$. While in the classical case $G=\SO(n)$ the constants are immediately computed by plugging in Euclidean balls for $A$ and $B$, this  method fails misserably in all other cases due to the greater numbers of unknowns and lack of bodies for which the integrals can be directly evaluated. Fortunately,  Alesker's theory of valuations provides a framework for investigating kinematic formulas which is far superior to any such ad hoc attempts: Building on his solution of McMullens's conjecture, Alesker defined in \cite{Alesker:Product} a natural product of valuations that satisfies, among other remarkable properties, a Poincar\'e-type duality and turns $\Val^G$ into a subalgebra. The \emph{fundamental theorem of algebraic integral geometry} (ftaig) is the crucial observation of Fu \cite{Fu:Unitary}, enhanced later by Bernig and Fu \cite{BernigFu:Convolution},  that the structure of the algebra $\Val^G$ is encoded in the corresponding kinematic formulas and vice versa.

Applying the ftaig and using different bases of $\Val^{\U(k)}$ previously introduced in \cite{Alesker:HL,Tasaki:Generalization1,Tasaki:Generalization2,Park:PHD} as well as Fu's elegant description \cite{Fu:Unitary} of the algebra structure, Bernig and Fu accomplished in their fundamental paper \cite{BernigFu:HIG} the determination of the full array of kinematic formulas in the Hermitian space $\CC^k$. Bernig further extended these results to the group $\SU(k)$ and fully resolved also the two exceptional cases $G_2$ and $\Spin(7)$, see \cite{Bernig:SU,Bernig:G2, Bernig:ProductFormula}. In the case of symplectic groups, which are closely related to the quaternions, on the contrary, our current knowledge is rather limited: First, combinatorial formulas for the Betti numbers of the valuation algebras were found by Bernig \cite{Bernig:Quaternionic}. Second, the algebra of $\Sp(2)\Sp(1)$-invariant valuations on the quaternionic plane $\HH^2$ and the corresponding kinematic formulas were determined  by Bernig and Solanes \cite{BernigSolanes:Classification,BernigSolanes:H2}. However, quaternionic integral geometry in general remains elusive, and so did the remaining exceptional case $G=\Spin(9)$ to which the present article is devoted. For completeness, let us also mention recent initial progress in extending the above developments  to tensor valuations \cite{BernigHug:Tensor}, to curved space forms \cite{SolanesW:Spheres, BFS:ComplexSpaceForms}, non-compact isotropy groups \cite{BFS:PseudoRiemannian,BFS:Crofton}, and valuations on function spaces \cite{CLM:Hadwiger2,Knoerr:Unitarily1}.

\subsection{Our results}

In this article we describe a basis of $\Val^{\Spin(9)}$, determine the multiplicative structure on this space, and compute explicitly the corresponding principal kinematic formula as an application. Before stating our results, let us summarize the a priori knowledge we have. As we will see, many of the structures we deal with throughout the text are closely related to the octonionic  multiplication; in particular, this holds for the group $\Spin(9)\subset\SO(16)$. In what follows, we therefore tacitly identify 
$\RR^8$ with $\OO$ and $\RR^{16}$ with $\OO^2$ (more details are given in Section \ref{ss:OO}).

First of all, one has the McMullen grading  $\Val^{\Spin(9)}=\bigoplus_{k=0}^{16}\Val_k^{\Spin(9)}$, where $\Val_k^{\Spin(9)}$ is the subspace of $k$-homogeneous valuations. The dimensions $ b_k=\dim\Val_k^{\Spin(9)}$ were recently determined by Bernig and Voide \cite[Theorem 1]{BernigVoide:Spin} as follows:
\begin{align}
	\renewcommand{\arraystretch}{1.3}
	\label{eq:BettiSpin9}
	\begin{array}{|c||c|c|c|c|c|c|c|c|c|c|c|c|c|c|c|c|c|}
		\hline
		k & 0&1&2&3&4&5&6&7&8&9&10&11&12&13&14&15&16
		\\\hline
		b_k& 1&1&2&3&6&10&15&20&27&20&15&10&6&3&2&1&1
		\\\hline
	\end{array}\,.
\end{align}
In particular, $\dim\Val^{\Spin(9)}=143$. Clearly, we have the intrinsic volumes $\mu_k\in\Val_k^{\Spin(9)}$. Further valuations $T_k\in\Val_k^{\Spin(9)}$ and $U_{16-k}\in\Val_{16-k}^{\Spin(9)}$, $0\leq k\leq 8$, naturally reflecting the octonionic structure, were introduced by Alesker   \cite{Alesker:Spin9}. However, \eqref{eq:BettiSpin9} shows that there are in fact many more.

In order to determine a basis of the space $\Val^{\Spin(9)}$, we will use a by now standard construction of valuations via the integration of differential forms over the normal cycle: Each $\phi\in\Val_k^{\Spin(9)}$, $0\leq k\leq 15$, can be represented by a $\b{\Spin(9)}$-invariant smooth 
 15-form on the sphere bundle $S\OO^2=\OO^2\times S^{15}$. The representation is far from unique but the ambiguity is precisely quantified by a certain second-order differential operator by virtue of the Bernig--Bröcker kernel theorem  \cite{BernigBroecker:Rumin}. This approach is particularly convenient for determining the convolution  of valuations introduced by Bernig and Fu \cite{BernigFu:Convolution}.

The first step of our construction, namely the description of the algebra $\Omega^*(S\OO^2)^{\b{\Spin(9)}}$, reduces by transitivity and invariance to describing $\left[\largewedge(T_pS\OO^2)^*\right]^{\Stab_p\b{\Spin(9)}}$ for a single point $p$. The tangent space decomposes under the action of $\Stab_p\b{\Spin(9)}\cong\Spin(7)$ as
\begin{align}
	\label{eq:Tp}
	T_pS\OO^2\cong\RR\oplus\Imag\OO\oplus\OO\oplus\Imag\OO\oplus\OO.
\end{align}
Consider the corresponding ($\OO$-valued) coordinate 1-forms $\alpha,\theta_0,\theta_1,\varphi_0,\varphi_1$ and recall that the usual wedge product extends naturally to a (non-associative) product of $\OO$-valued forms, see \cite{Kotrbaty:O-valued}. Extending the invariant theory of $\Spin(7)$ to the direct sum $\Imag\OO\oplus\OO$ of the vector and spin representation, we obtain:
\begin{theorem}
	The algebra $\left[\largewedge(T_pS\OO^2)^*\right]^{\Spin(7)}$ has 97 generators. All of them are polynomials (in the sense of the wedge product of $\OO$-valued forms) in $\alpha,\theta_0,\theta_1,\varphi_0,\varphi_1$, and their conjugates.
\end{theorem}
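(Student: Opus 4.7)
The plan is to reduce the computation of $\left[\largewedge(T_pS\OO^2)^*\right]^{\Spin(7)}$ to a first fundamental theorem (FFT) of invariant theory for $\Spin(7)$ on the isotropy representation \eqref{eq:Tp}. The trivial $\RR$-summand provides the generator $\alpha$ automatically. The remaining four summands consist of two copies of the vector representation $V_7=\Imag\OO$ and two copies of the spin representation $V_8=\OO$, whose $\Spin(7)$-invariants should be controlled by the octonionic extension of $\Spin(7)$-invariant theory announced in the abstract. Intuitively, this extended FFT asserts that all $\Spin(7)$-equivariant polynomial maps involving both $V_7$ and $V_8$ are generated by the natural octonionic operations: the Euclidean inner products on $\Imag\OO$ and $\OO$, the octonionic multiplication, conjugation and projection to the imaginary part, together with the associative 3-form and the Cayley 4-form.

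To pass from polynomial to exterior invariants, I would polarize and interpret each basic operation as a wedge polynomial in the $\OO$-valued coordinate 1-forms $\alpha,\theta_0,\bar\theta_0,\theta_1,\bar\theta_1,\varphi_0,\bar\varphi_0,\varphi_1,\bar\varphi_1$, using the non-associative wedge product of $\OO$-valued forms from \cite{Kotrbaty:O-valued}. The candidate generators can then be organized by their multi-degree in $(\alpha,\theta_0,\theta_1,\varphi_0,\varphi_1)$: since $\alpha$ is a single $1$-form and each of the remaining blocks contributes a bounded number of independent odd components, only finitely many multidegrees produce nonzero invariants. The branching rules for $\Spin(7)$ together with the known decompositions of $V_7^{\otimes a}\otimes V_8^{\otimes b}$ into irreducibles prescribe the multiplicity of the trivial representation in each fixed multidegree. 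Summing these multiplicities, with suitable accounting for relations among the generating octonionic expressions, is what yields the count $97$.

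The main obstacle is the extended FFT itself, namely proving that the octonionic expressions listed above exhaust all $\Spin(7)$-equivariant maps between direct sums of copies of $V_7$ and $V_8$. While the case of copies of $V_7$ alone is classical, the interplay with the spin representation through the identification $\Spin(7)\subset\SO(\OO)$ must be handled carefully, in particular to verify that no additional mixed invariants appear when the two representations are combined. A secondary difficulty is that the wedge product of $\OO$-valued forms is neither commutative nor associative, so bracketings, orderings, and the resulting syzygies must be tracked explicitly throughout the enumeration in order to confirm the precise count of $97$ generators and to show that every one of them is indeed a polynomial in the five $1$-forms $\alpha,\theta_0,\theta_1,\varphi_0,\varphi_1$ and their conjugates.
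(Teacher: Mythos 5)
Your overall architecture matches the paper's: the trivial summand of \eqref{eq:Tp} contributes $\alpha$, and everything else hinges on a first fundamental theorem for $\Spin(7)$ acting on $\Imag\OO\oplus\OO$, which is then converted into alternating forms written as wedge polynomials in the $\OO$-valued coordinate $1$-forms. However, there is a genuine gap at the heart of your proposal: the extended FFT is precisely the statement that has to be proved, and you leave it as an ``intuitive'' assertion while explicitly admitting it is the main obstacle. The paper's proof supplies the missing device: the $\Spin(7)$-equivariant maps $j\colon u\mapsto L_u\in\OO\otimes\OO$ and $\pi\colon x\otimes y\mapsto\tfrac12(x\b y-y\b x)$ with $\pi\circ j=\id$ (Proposition \ref{pro:Spin7equivariant}) make the contraction maps $\calF_{l,m}\colon P_{l-1,m+2}\to P_{l,m}$ surjective, so every mixed invariant is pulled back from Schwarz's FFT for the spin representation alone; a systematic reduction of the resulting ``chains'' (using $L_uL_v+L_vL_u=-2\ip uv\id$, the Moufang identities, and the expression \eqref{eq:PhiIP} for $\Phi$) then produces the finite list of Theorem \ref{thm:Plm}. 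Without some argument of this kind, your claim that the natural octonionic operations exhaust all mixed $\Spin(7)$-invariants is unsupported; note also that your candidate list is not quite right, e.g.\ the associative $3$-form on $\Imag\OO$ is only $\G_2$-invariant and cannot occur, since the vector action of $\Spin(7)$ on $\Imag\OO$ is all of $\SO(7)$.

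A secondary problem is the counting step. Summing the multiplicities of the trivial representation over all multidegrees of $\largewedge(T_pS\OO^2)^*$ computes the dimensions of the spaces of invariant forms degree by degree; it does not give the number of \emph{algebra generators}, which requires subtracting all products of lower-degree invariants. The count $97$ in the paper is not obtained by any such multiplicity bookkeeping but by the explicit construction: the $1+8+36+36+15=96$ generators of $\bigl(\largewedge V^*\bigr)^{\Spin(7)}$ listed in Theorem \ref{thm:Spin7invforms}, obtained by feeding basis vectors into the multilinear invariants of Theorem \ref{thm:Plm} and rewriting the result via $\Real$ of iterated wedge products of $\theta_0,\theta_1,\varphi_0,\varphi_1$ and their conjugates, together with $\alpha$. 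Your phrase ``suitable accounting for relations among the generating octonionic expressions'' is exactly the nontrivial content that the outline does not provide.
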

Precise formulas for the generators are given in Theorem \ref{thm:Spin7invforms}, see also the proof of Theorem \ref{thm:Spin9invforms}. Let us point out that the generators themselves are real-valued; however, in order to express them in a closed form it is crucial that they are regarded as elements of the larger algebra of $\OO$-valued forms. 

The second task, the accomplishment of which is necessary for determining the kernel of the map
$\Omega^{15}(S\OO^2)^{\b{\Spin(9)}}\to\Val^{\Spin(9)}$ as well as for computing the convolution, is to compute the exterior derivative of the invariant forms. Since we only know the forms explicitly in a point, we employ Cartan's method of moving frames and regard them temporarily as differential forms on the group $\b{\Spin(9)}$ itself. Using the Maurer--Cartan equations and the description of the Lie algebra $\spin(9)\subset \End(\OO^2)$ we introduce, we obtain the following formal differentiation rules made rigorous by Theorem \ref{thm:d} below:

\begin{theorem}
	\label{thm:d0}
	The exterior derivative on $\Omega^*(S\OO^2)^{\b{\Spin(9)}}\cong\left[\largewedge(T_pS\OO^2)^*\right]^{\Spin(7)}$ is completely determined by the following  equations
	\begin{align*}
		d  (\alpha+\theta_0)&=-\varphi_0\wedge(\alpha+\theta_0)-\theta_1\wedge\b{\varphi_1},\\
		d \theta_1&=(\alpha+\theta_0)\wedge\varphi_1-\varphi_0\wedge\theta_1+\theta_1\wedge\varphi_0,\\
		d \varphi_0&=-\varphi_0\wedge\varphi_0-\varphi_1\wedge\b{\varphi_1},\\
		d \varphi_1&=\varphi_1\wedge\varphi_0.
	\end{align*}
\end{theorem}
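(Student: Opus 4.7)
\emph{Proof plan.} The strategy is the classical moving-frame technique: identify $S\OO^2$ with the homogeneous space $\b{\Spin(9)}/\Spin(7)$, regard $\alpha, \theta_0, \theta_1, \varphi_0, \varphi_1$ as components of the Maurer--Cartan form on $\b{\Spin(9)}$, and unwind the structure equation $d\omega = -\omega \wedge \omega$. Pick the base point $p_0 = (0, e_0)$ with $e_0 = (1, 0) \in S^{15}$, whose isotropy group is $\Spin(7)$. The $\b{\Spin(9)}$-invariant forms on $S\OO^2$ are then exactly the $\Spin(7)$-basic, bi-invariant forms on $\b{\Spin(9)}$, and the $\Spin(7)$-module decomposition of $\b{\spin(9)} = \spin(9) \oplus \OO^2$ mirrors \eqref{eq:Tp}.

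Using the model of $\spin(9) \subset \End(\OO^2)$ established earlier in the paper, the translation part of the Maurer--Cartan form is an $\OO^2$-valued $1$-form whose two components, after splitting $\OO = \RR \oplus \Imag\OO$ along $e_0$, are precisely $\alpha + \theta_0$ and $\theta_1$. The $\spin(9)$-part is packaged as an anti-Hermitian $2\times 2$ block of $\OO$-valued $1$-forms, $\Phi = \bigl(\begin{smallmatrix} \varphi_0 & -\b{\varphi_1} \\ \varphi_1 & \varphi_0 \end{smallmatrix}\bigr)$, with $\varphi_0 \in \Imag\OO$ and $\varphi_1 \in \OO$. The Maurer--Cartan equation for the semidirect product then decouples into $d\theta = -\Phi \wedge \theta$ (encoding the action of $\spin(9)$ on $\OO^2$) and $d\Phi = -\Phi \wedge \Phi$ (encoding the bracket in $\spin(9)$). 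Expanding these two block identities in the $\OO$-valued entries, using the wedge calculus of $\OO$-valued forms from \cite{Kotrbaty:O-valued}, yields the four stated equations.

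The main obstacle is the non-associativity of $\OO$: the Lie algebra $\spin(9)$ is not literally the space of $2\times 2$ anti-Hermitian octonionic matrices, so the block product $\Phi \wedge \Phi$ must be interpreted using the precise $\spin(9)$-model the authors construct, and bracket terms that would cancel in the associative setting may survive. Moreover, the non-commutativity of octonionic multiplication is exactly what distinguishes $\varphi_0 \wedge \theta_1$ from $\theta_1 \wedge \varphi_0$ and forces both to appear with opposite signs in the $d\theta_1$-equation, and likewise accounts for the appearance of $\theta_1 \wedge \b{\varphi_1}$ rather than $\b{\varphi_1} \wedge \theta_1$ in $d(\alpha+\theta_0)$. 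Once this algebraic bookkeeping is under control, one checks that each block equation lands in the expected $\Spin(7)$-invariant subspace (so that no forbidden components sneak in), and finally descends from $\b{\Spin(9)}$ to $S\OO^2$ by restricting to the $\Spin(7)$-horizontal complement at $p_0$; this step is automatic, because every term is already written in the $\Spin(7)$-equivariant basis $\alpha, \theta_0, \theta_1, \varphi_0, \varphi_1$.
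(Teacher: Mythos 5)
Your overall strategy---pulling the invariant forms back to $\b{\Spin(9)}$ and exploiting the Maurer--Cartan equation---is the same as the paper's, but the proposal has a genuine gap at the central step. The $\spin(9)$-part of the Maurer--Cartan form cannot be packaged as the $2\times 2$ octonionic matrix $\Phi=\bigl(\begin{smallmatrix}\varphi_0&-\b{\varphi_1}\\ \varphi_1&\varphi_0\end{smallmatrix}\bigr)$: that ansatz has only $7+8=15$ independent entries, while $\dim\spin(9)=36$. The missing $21$ components are the $\Spin(7)$-vertical $1$-forms $\varphi_{1,1}^{a,b}$, $1\leq b<a\leq 7$, and they genuinely appear when you differentiate $\theta_1$ and $\varphi_1$ on the group; since they are linearly independent of $\alpha,\theta_0,\theta_1,\varphi_0,\varphi_1$, your claim that the descent to $S\OO^2$ is `automatic because every term is already written in the equivariant basis' fails---the four displayed formulas are simply false as identities for $d$ of the Maurer--Cartan components on $\b{\Spin(9)}$. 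The paper resolves this by introducing the projection $h$ that kills the $\varphi_{1,1}^{a,b}$ and the operator $d_h=h\circ d$, and then proving $d\circ\pi^*=d_h\circ\pi^*$ using the splitting of the tangent bundle of $\b{\Spin(9)}$ into $H=\{\varphi_{1,1}^{a,b}=0\}$ and the complementary vertical bundle (equivalently: $d$ of a basic form is again basic, hence horizontal, so the vertical terms contribute nothing). Without some such argument, formulas obtained on the group are not rigorously tied to the exterior derivative on $\Omega^*(S\OO^2)^{\b{\Spin(9)}}$; ``restricting to the horizontal complement at $p_0$'' is not a substitute, because $d$ is not determined by pointwise restriction.

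Second, even modulo the vertical forms the block picture does not by itself produce the stated right-hand sides, and deferring this to ``interpreting the block product using the $\spin(9)$-model'' skips the actual computation. What is needed is the explicit linear description of $\spin(9)\subset\End(\OO^2)$ (Corollary \ref{cor:spin9}): modulo $\ker h$ one has $\varphi_{0,0}^{a,b}\equiv\varphi_{0,0}^{a\b b,0}$, so the $(0,0)$-block does act on $\alpha+\theta_0$ as left multiplication by $\varphi_0$, but the $(1,1)$-block satisfies $\varphi_{1,1}^{a,0}\equiv 2\varphi_{0,0}^{a,0}$, $\varphi_{1,1}^{0,a}\equiv-2\varphi_{0,0}^{a,0}$ and $\varphi_{1,1}^{a,b}\equiv0$ for $a,b\geq1$; it is this asymmetry (a factor $2$ concentrated on the real line, not $L_{\varphi_0}$) that, after reassembling the octonion-valued forms, yields the two-sided term $-\varphi_0\wedge\theta_1+\theta_1\wedge\varphi_0$ in $d\theta_1$ and the term $\varphi_1\wedge\varphi_0$ in $d\varphi_1$. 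Your sketch attributes these terms to the non-commutativity of $\OO$ inside a formal block product, but no such product reproduces them without the relations above; supplying those relations and the projection argument is exactly the missing content of the proof.
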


Based on the two main ingredients, we can construct in terms of the invariant forms a basis of $\Val^{\Spin(9)}$ and 
 determine the convolution product on this space. We thus obtain our main result:
\begin{theorem}
\label{thm:main}
	There is an isomorphism of graded algebras
	\begin{align*}
		\Val^{\Spin(9)}\cong\RR[t,s,v,u_1,u_2,w_1,w_2,w_3,x_1,x_2,y,z]/I,
	\end{align*}
	where $t,s,v,u_i,w_i,x_i,y,z$ have degrees $1,\dots,8$, respectively, and $I=(f_1,\dots,f_{78})$ is the ideal generated by the  homogeneous elements $f_1,\dots,f_{78}$ listed in Appendix \ref{app:relations}.  Moreover, both homogeneous generating sets $\{t,s,v,u_1,u_2,w_1,w_2,w_3,x_1,x_2,y,z\}$ and $\{f_1,\dots,f_{78}\}$ are minimal.
\end{theorem}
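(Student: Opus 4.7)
The plan is to realize each of the twelve generators as an explicit $\Spin(9)$-invariant valuation defined by integrating an invariant differential form over the normal cycle. Using the basis of $\left[\largewedge(T_pS\OO^2)^*\right]^{\Spin(7)}$ provided by Theorem~\ref{thm:Spin7invforms}, for each degree $k\in\{1,\dots,8\}$ I would select one or several invariant $15$-forms on $S\OO^2$ whose associated $k$-homogeneous valuations are linearly independent from the valuations obtainable as convolution products of the already-chosen generators of lower degree. The degree-$1$ generator $t$ may be taken to be the first intrinsic volume $\mu_1$; in higher degrees natural candidates come from Alesker's valuations $T_k$ and $U_{16-k}$, supplemented by further valuations reflecting the octonionic structure uncovered by our extension of the invariant theory of $\Spin(7)$.

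Next I would compute all pairwise convolution products of the generators. In the form representation this amounts to wedging the representing invariant forms and then reducing modulo the kernel of the map $\Omega^{15}(S\OO^2)^{\b{\Spin(9)}}\to\Val^{\Spin(9)}$; this kernel is computable by the Bernig--Br\"ocker kernel theorem once the relevant exterior derivatives are known, and Theorem~\ref{thm:d0} supplies precisely this information. Performing this computation for every pair of generators yields explicit polynomial identities among $t,s,v,u_1,u_2,w_1,w_2,w_3,x_1,x_2,y,z$; after elimination of redundancies, these provide the $78$ candidate relations $f_1,\dots,f_{78}$.

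To confirm the isomorphism, I would verify that the Hilbert series of the presentation $\RR[t,s,v,u_1,u_2,w_1,w_2,w_3,x_1,x_2,y,z]/I$ agrees term-by-term with the Betti numbers from~\eqref{eq:BettiSpin9}. Surjectivity of the canonical map onto $\Val^{\Spin(9)}$ together with the equality of graded dimensions then forces its kernel to equal $I$. Minimality of the generating set follows from a graded Nakayama argument: a generator of degree $k$ is redundant exactly when it lies in the span of products of strictly lower-degree generators in $\Val_k^{\Spin(9)}$, which can be checked degree by degree using the Poincar\'e pairing on $\Val^{\Spin(9)}$ (whose nondegeneracy is reflected by the symmetry $b_k=b_{16-k}$). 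Minimality of the relation set reduces to an analogous Koszul-homology computation at bidegree $(2,\bullet)$. The principal obstacle I anticipate is the sheer volume of symbolic computation required: wedging forms built from many non-commuting, non-associative octonionic expressions and systematically reducing modulo Rumin-type relations.
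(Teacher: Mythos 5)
Your overall architecture---generators realized by invariant differential forms, convolution computed via the Rumin differential and the structure equations of Theorem \ref{thm:d0}, dimension comparison with the Bernig--Voide numbers \eqref{eq:BettiSpin9}, and a graded Nakayama argument for minimality---is essentially the one used in the paper. The step that fails is the claim that the $78$ relations can be extracted from the \emph{pairwise} convolution products of the twelve generators. A relation obtained this way has the shape (product of two generators) minus (linear combination of basis monomials), so in each degree $k$ you get at most as many relations as there are pairs of generator degrees summing to $k$. In degree $9$ there are only ten such products, namely $tz$, $sy$, $vx_1$, $vx_2$ and the six products $u_iw_j$, whereas the ideal $I$ requires twenty minimal generators of degree $9$ (these are $f_9,\dots,f_{28}$ in Appendix \ref{app:relations}; cf.\ the first graded Betti numbers $\beta_{1,k}$ recorded in Section \ref{s:Q}). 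Relations with leading monomials such as $ts^4$, $s^3v$ or $s^2w_1$ simply cannot arise from pairwise products, so the ideal generated by your candidate relations is provably strictly smaller than $I$. This is precisely why the paper runs the degree-by-degree algorithm of Lemma \ref{lem:ideal} and Corollary \ref{cor:ideal}: in each degree one multiplies all previously selected basis monomials by the generators, extracts a maximal subset that is linearly independent in $\Val^{\Spin(9)}$ (independence being decided via Theorem \ref{thm:kernel}), and records the resulting linear dependencies; this produces simultaneously the $143$-element monomial basis and a finite generating set of $I$, which is then minimized by Lemma \ref{lem:minimal}.

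Two smaller points. First, surjectivity of $\RR[t,\dots,z]\to\Val^{\Spin(9)}$ is asserted but not derived in your plan; it is exactly the statement that the monomials in the chosen generators span every graded piece---in particular in degrees $9,\dots,16$, where no new generators are available---and in the paper it falls out of the same algorithm through the count $143=\sum_k b_k$. Your Hilbert-series test would in practice detect a failure of either point (with too few relations the Hilbert function of the quotient exceeds $b_k$ somewhere), but as written the proposal does not produce the correct relation set in the first place, nor a proof that the map is onto. Second, under the convolution grading used here one has $\deg\phi=16-k$ for $\phi\in\Val_k$, so the degree-one generator must be a multiple of $\mu_{15}$ (indeed $\hat t=-2\mu_{15}$), not $\mu_1$; the latter would be the right choice only for the Alesker product.
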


 An explicit isomorphism is given in the proof of this theorem in Section \ref{s:main}. An explicit basis of $\Val^{\Spin(9)}$ consisting of monomials in the generating valuations is listed in Corollary \ref{cor:basis}.

Let us mention two applications of the main result. First, our information about the algebra suffices to compute explicitly via the ftaig the principal kinematic formula on the octonionic plane:
\begin{align}
	\label{eq:pkf0}
	\int_{\b{\Spin(9)}}\chi(A\cap \b g B)d\b g.
\end{align}
Here $\chi\in\Val_0^{\Spin(9)}$ is the Euler characteristic and $A,B\subset\OO^2$ are any convex bodies. Finally, we use \eqref{eq:pkf0} to express Alesker's valuations $T_k$, $0\leq k\leq 8$,  in our monomial basis.

\subsection{Plan of the paper and notation}

In Section \ref{s:background}, we summarize necessary background about valuations, the octonions, the Lie groups $\Spin(9)$ and $\Spin(7)$ and the Maurer--Cartan form, and collect several general facts from commutative algebra. In Section \ref{s:spin9}, the Lie algebra $\spin(9)$ is described in terms of a system of linear equations using the octonionic structure. Section \ref{s:InvSpin7} is devoted to the extension of the invariant theory of $\Spin(7)$ to the representation $\Imag\OO\oplus\OO$. The generators of the algebra $\left[\largewedge(T_pS\OO^2)^*\right]^{\Spin(7)}$ are constructed in Section \ref{s:Spin7invforms} and interpreted as generators of $\Omega^*(S\OO^2)^{\b{\Spin(9)}}$ in Section \ref{s:Spin9invforms}. The differentiation rules (Theorem \ref{thm:d0}) are proved in Section \ref{s:d}. In  Section \ref{s:main}, our main result is established, i.e., $\Val^{\Spin(9)}$ is explicitly described as an algebra. The applications, namely, the computation of the principal kinematic formula and the expression of the Alesker's valuations in our basis are discussed in Sections \ref{s:pkf} and \ref{s:Tk}, respectively. Finally, in Section \ref{s:Q} we formulate several questions arising from our work.

The following conventions will be adhered to throughout the article: All vector spaces are real and all tensor products are taken over $\RR$. $\Grass_k$ denotes the Grassmannian of $k$-dimensional linear subspaces of $\RR^n$ and $dE$ the Haar probability measure on it. $\calK(\RR^n)$ is the set of convex bodies, i.e., non-empty convex compact subsets of $\RR^n$. On this space, the Minkowski addition is defined as $A+B=\{a+b\mid a\in A,b\in B\}$. $D^n\in\calK(\RR^n)$ is the $n$-dimensional closed Euclidean unit ball, and $\omega_n$  is its volume, in particular, $\omega_{2k}=\frac{\pi^k}{k!}$. Finally, we denote $[0]=1$ and $[k]=\frac{k\omega_k}{2\omega_{k-1}}$, $k\in\NN$, and with $[k]!=[k][k-1]\cdots[1]$ we set $\begin{bmatrix}n\\k\end{bmatrix}=\frac{[n]!}{[k]![n-k]!},\quad 0\leq k\leq n.$

\subsection*{Acknowledgements}

The first-named author is indebted to Andreas Bernig and Gil Solanes for their hospitality, encouragement, and useful discussions at early stages of this work.
 The second-named author wishes to thank Hendrik S\"u\ss{} for helpful discussions about commutative algebra.

\section{Background}
\label{s:background}

\subsection{Smooth valuations on convex bodies}

Let us begin by reviewing basic facts about the space of continuous and translation-invariant valuations, with emphasis on the dense subspace of smooth valuations and certain algebraic structures  it admits. For more information we refer the reader to Schneider's book \cite[Chapter 6]{Schneider:BM}, the lecture notes by Alesker and Fu \cite{AleskerFu:Barcelona}, and the references therein.

In this paper, a valuation is a function $\phi\maps{\calK(\RR^n)}\RR$ satisfying
\begin{align*}
\phi(A\cup B)+\phi(A\cap B)=\phi(A)+\phi(B)
\end{align*}
for any $A,B\in\calK(\RR^n)$ with $A\cup B\in\calK(\RR^n)$. Let $\Val$ denote the linear space of all continuous and translation-invariant valuations on $\calK(\RR^n)$. Basic examples are the Euler characteristic $\chi$, which is constant $1$, and the Lebesgue measure $\vol_n$. More generally, the intrinsic volumes given by the Steiner formula
\begin{align}
\label{eq:Steiner}
\vol_n(A+\lambda D^n)=\sum_{k=0}^n\omega_k\mu_{n-k}(A)\lambda^k,\quad A\in\calK(\RR^n), \quad \lambda \geq 0,
\end{align}
are elements of $\Val$. Observe that, in particular, $\mu_0=\chi$ and $\mu_n=\vol_n$.

 By a result of McMullen \cite{McMullen:EulerType}, the space $\Val=\bigoplus_{k=0}^n \Val_k$ is a Banach space, graded by the degree of homogeneity of a valuation. Moreover, $\Val_k=\Val_k^+\oplus\Val_k^-$ with respect to the parity. Observe that $\mu_k\in\Val_k^+$. Recall also that $\Val_0=\RR\chi$ and, by Hadwiger's characterization of volume, $\Val_n=\RR\vol_n$. Based on the latter, the Klain function $\Klain_\phi\maps{\Grass_k}{\RR}$ of $\phi\in\Val_k^+$ is defined by $\phi|_E=\Klain_\phi(E)\vol_k$. An important result of Klain \cite{Klain:Even} is that $\phi$ is uniquely determined by $\Klain_\phi$.

Consider the natural (left) action of $\GL(n)$ on $\Val$. The fundamental result of Alesker \cite{Alesker:Irreducibility} is that the representations $\Val_k^\pm$ of $\GL(n)$ are topologically irreducible. This has numerous important consequences. First, as conjectured by McMullen \cite{McMullen:Continuous}, the valuations $\vol_n(\Cdot+A)$, $A\in\calK(\RR^n)$, span in $\Val$ a dense subset. Second, denote $S\RR^n=\RR^n\times S^{n-1}$ and recall that the normal cycle $N(A)=\{(y,v)\in S\RR^n\mid \ip{v}{y-z}\geq 0\text{ for all }z\in A\}$ is for any $A\in\calK(\RR^n)$ a naturally oriented $(n-1)$-dimensional Lipschitz submanifold of $S\RR^n$. Then the valuations
\begin{align*}
A\mapsto c\vol_n(A)+\int_{N(A)}\omega,
\end{align*}
where $c\in\RR$ and $\omega\in\Omega^{n-1}(S\RR^n)$ is translation-invariant, are precisely the $\GL(n)$-smooth vectors in $\Val$. Such valuations are called smooth and the dense subspace of them is denoted by $\Val^\infty$. Recall that $\Val^\infty$ is equipped with a natural Fr\'echet space topology (stronger than the Banach space topology induced from $\Val$).

Observe that the space $\Omega^*(S\RR^n)^{\tr}$ of translation-invariant forms is naturally bi-graded and that, via integration over the normal cycle, $(k,n-1-k)$-forms give rise to $k$-homogeneous valuations. An important result due to Bernig and Bröcker determines the kernel of this map. Let $\rho\maps{S\RR^n}{\RR^n}$ be the obvious projection and let
\begin{align}
\label{eq:ContactForm} 
\alpha(X)=\ip{v}{d\rho(X)},\quad X\in T_{p,v} S\RR^n,
\end{align}
be the contact 1-form on $S\RR^n$. For any $\omega\in\Omega^{n-1}(S\RR^n)^{\tr}$ there exists by symplectic linear algebra a unique form $\alpha\wedge\xi\in\Omega^{n-1} (S\RR^n)^{\tr}$ such that $D\omega=d(\omega+\alpha\wedge\xi)$ is a multiple of $\alpha$. $D\omega$ is referred to as the Rumin differential of $\omega$.
\begin{theorem}[Bernig--Bröcker \cite{BernigBroecker:Rumin}]
\label{thm:kernel}
Let $1\leq k\leq n-1$ and $\omega\in\Omega^{k,n-1-k}(S\RR^n)^{\tr}$. The valuation defined by $\omega$ is identically zero if and only if $D\omega=0$.
\end{theorem}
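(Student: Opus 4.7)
The plan is to treat the two implications separately, with the ``if'' direction following from Stokes' theorem and the ``only if'' direction requiring a first-variation argument.

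For the backward direction, I would set $\tilde\omega = \omega + \alpha \wedge \xi$, so that $d\tilde\omega = D\omega = 0$ by hypothesis. Since $N(A)$ is Legendrian, $\alpha|_{N(A)} \equiv 0$, and hence $\int_{N(A)} \omega = \int_{N(A)} \tilde\omega$. Consider the smooth map $\Phi \colon [0, 1] \times N(A) \to S\RR^n$ defined by $\Phi(t, y, v) = (ty, v)$. Stokes' theorem applied to $\Phi^*\tilde\omega$ (whose exterior derivative vanishes) yields
\[
\int_{N(A)} \tilde\omega = \int_{\{0\} \times N(A)} \Phi^* \tilde\omega.
\]
The right-hand side vanishes for bidegree reasons: $\Phi$ collapses $\{0\}\times N(A)$ into $\{0\} \times S^{n-1}$, so the pullback of every horizontal $1$-form is zero; yet every term of $\tilde\omega$ carries at least one horizontal factor ($\omega$ because $k \geq 1$, and $\alpha \wedge \xi$ because $\alpha$ is itself horizontal). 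This proves the vanishing of the valuation on smooth strictly convex bodies, and the full statement follows by continuity of $A \mapsto \int_{N(A)} \omega$ together with the density of smooth strictly convex bodies in $\calK(\RR^n)$.

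For the forward direction, assume $\phi_\omega(A) := \int_{N(A)} \omega$ vanishes identically. The key observation is that $D\omega$ is encoded in the first variation of $\phi_\omega$. For a $C^2$ strictly convex body $A$ and $f \in C^\infty(S^{n-1})$, let $A_t$ denote the convex body whose support function is $h_A + tf$. Differentiating $\int_{N(A_t)} \tilde\omega$, pulling back via the Gauss map, and using that $d\tilde\omega$ coincides with $D\omega$ modulo $\alpha$, one derives a first-variation formula of the shape
\[
\frac{d}{dt}\bigg|_{t=0} \phi_\omega(A_t) = \int_{N(A)} (f\circ\pi)\, \iota_T D\omega,
\]
for a suitable vector field $T$ transverse to $N(A)$ (a multiple of the Reeb field). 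The hypothesis makes the left-hand side vanish for every test function $f$, so $\iota_T D\omega|_{N(A)} = 0$ for every smooth strictly convex body $A$. A transversality argument, exploiting that normal cycles of such bodies furnish Legendrian submanifolds whose tangent spaces cover a dense subset of Legendrian planes at each point of $S\RR^n$, then upgrades this pointwise to $D\omega = 0$ on all of $S\RR^n$.

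The main obstacle is the forward direction: one must set up the first-variation formula with enough precision to isolate the Rumin differential $D\omega$ rather than merely the ordinary exterior derivative $d\omega$, and then carry out the transversality argument converting vanishing on all normal cycles into pointwise vanishing of an invariant form on the entire sphere bundle.
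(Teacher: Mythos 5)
This statement is quoted in the paper from \cite{BernigBroecker:Rumin} without proof, so your argument has to be judged on its own. Your ``if'' direction is essentially correct and standard: replacing $\omega$ by $\omega+\alpha\wedge\xi$ (harmless on Legendrian cycles) and contracting via $(t,y,v)\mapsto(ty,v)$, Stokes' theorem reduces everything to the slice $t=0$, where the hypothesis $k\geq 1$ kills the integrand for bidegree reasons; only routine care is needed to apply Stokes on the Lipschitz cycle $N(A)$, and in fact the argument works for arbitrary convex bodies, so the final approximation step is not even needed.

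The ``only if'' direction, however, has a genuine gap in its final step, beyond the fact that the first-variation formula is asserted rather than proved. Write $D\omega=\alpha\wedge\psi$. What the variational argument can give you is that $\psi$, restricted to the contact hyperplane $H=\ker\alpha$, vanishes on every Lagrangian plane of $(H,d\alpha|_H)$ that occurs as a tangent plane of $N(A)$ for $A$ smooth and strictly convex. Two problems arise. First, these tangent planes are exactly the graphs of positive definite symmetric maps, which form an open but \emph{not} dense subset of the Lagrangian Grassmannian; this is repairable, because the value of a fixed form on the graph of a symmetric map is polynomial in that map, so vanishing on an open set of such graphs propagates to all Lagrangian planes. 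Second, and more seriously, vanishing on \emph{all} Lagrangian planes does not force a middle-degree form to vanish: it only places $\psi|_H$ in the ideal generated by the symplectic form $d\alpha|_H$. Concretely, any nonzero form of the shape $\alpha\wedge d\alpha\wedge\tau$ is a multiple of $\alpha$ whose contraction with the Reeb field restricts to zero on every Legendrian plane, so the principle you invoke (``vanishing on a dense family of Legendrian tangent planes implies the form is zero'') is false and cannot distinguish $D\omega=0$ from $D\omega=\alpha\wedge d\alpha\wedge\tau$. The missing ingredient is the structural property of the Rumin differential: since $D\omega=d(\omega+\alpha\wedge\xi)$ is closed, $0=dD\omega=d\alpha\wedge\psi-\alpha\wedge d\psi$ shows that $\psi|_H$ is primitive, i.e.\ $d\alpha\wedge\psi\equiv 0$ modulo $\alpha$; writing $\psi|_H=(d\alpha\wedge\gamma)|_H$ from the Lagrangian-vanishing step and using the Lefschetz isomorphism (injectivity of wedging with $(d\alpha|_H)^2$ on $\Lambda^{n-3}H^*$) then gives $\gamma=0$, hence $\psi\equiv 0$ modulo $\alpha$ and $D\omega=0$. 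Without this symplectic-linear-algebra input your argument does not close.
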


Finally, it is a consequence of the irreducibility theorem that the space $\Val^\infty$ carries the structure of a commutative graded algebra;  more precisely, Bernig and Fu \cite{BernigFu:Convolution} showed that
\begin{align*}
\vol_n(\Cdot+A)*\vol_n(\Cdot+B)=\vol_n(\Cdot+A+B),
\end{align*}
where $A,B\in\calK(\RR^n)$ have smooth and strictly positively curved boundary, defines on $\Val^\infty$ an associative, commutative, continuous, and bilinear product with identity element $\vol_n$, that satisfies $\Val_{n-k}^\infty*\Val_{n-l}^\infty\subset\Val_{n-k-l}^\infty$ and Poincar\'e duality, meaning that the convolution pairing $\Val_k^\infty\times\Val_{n-k}^\infty\to\Val_0^\infty\cong\RR$ is non-degenerate. The convolution admits a simple algebraic formula in terms of differential forms: Let $\phi_i\in\Val^\infty$ be defined by $\omega_i\in\Omega^{n-1}(S\RR^n)^{\tr}$. Then
\begin{align}
\label{eq:convolutionForms}
\phi_1*\phi_2=\int_{N(\Cdot)}*_1^{-1}(*_1\omega_1\wedge*_1D\omega_2),
\end{align}
where $*_1$ acts on $\Omega^{k,n-1-k}(S\RR^n)^{\tr}\cong\largewedge^k(\RR^n)^*\otimes\Omega^{n-1-k}(S^{n-1})$ as $*_1(\sigma\otimes\tau)=(-1)^{{n-k\choose 2}}(*\sigma)\otimes\tau$, $*\sigma$ being the standard Hodge dual of $\sigma\in\largewedge(\RR^n)^*$. Moreover, it is easy to see that
\begin{align}
\label{eq:Dconv}
D\left[*_1^{-1}(*_1\omega_1\wedge*_1D\omega_2)\right]=*_1^{-1}(*_1D\omega_1\wedge*_1D\omega_2).
\end{align}
For completeness, let us point out that there exists another, geometrically different, but algebraically isomorphic  product structure on valuations, namely the Alesker product \cite{Alesker:Product}.

Recall that the intrinsic volumes are smooth and consider the degree $-1$ linear operator on $\Val^\infty$ given by $\Lambda\phi=\mu_{n-1}*\phi$ or equivalently (see \cite[Corollary 1.8]{BernigFu:Convolution})
\begin{align*}
(\Lambda\phi)(A)=\frac12\dlambda \phi(A+\lambda D^n),\quad A\in\calK(\RR^n).
\end{align*}
An immediate consequence of the Steiner formula \eqref{eq:Steiner} is 
\begin{equation}\label{eq:SteinerMu}
\Lambda \mu_k = [ n-k+1] \mu_{k-1},
\end{equation}
which directly implies
\begin{align}
\label{eq:nthpower}
\mu_{n-1}^{n}=\Lambda^n\vol_n=\frac{n!\omega_n}{2^n}\chi.
\end{align}
Although it is not strictly necessary for this paper, let us recall that the operator $\Lambda$ satisfies the hard Leschetz theorem and the Hodge--Riemann relations, see \cite{BernigBroecker:Rumin,KW:HR}.

We conclude the review of valuation theory by recalling a connection between the algebraic structures on $\Val^\infty$ and integral geometry. Let $G\subset\SO(n)$ be a closed subgroup acting transitively on the sphere $S^{n-1}$. Alesker \cite{Alesker:OnConjecture} observed that the subspace $\Val^G\subset\Val$ of $G$-invariant valuations satisfies $\dim\Val^G<\infty$ and $\Val^G\subset\Val^\infty$. Moreover, since the convolution clearly commutes with the action of $\SO(n)$, the latter is a subalgebra. Let $d\b g$ denote the product of the Haar probability measure and Lebesgue measure on $\b G=G\ltimes\RR^n$. The following is a special case of the ftaig:
\begin{theorem}[Fu \cite{Fu:Unitary}]
\label{thm:ftaig}
Let $\phi_1,\dots,\phi_N$ be a basis of $\Val^G$ and let $M=(M_{i,j})\in\RR^{N\times N}$ be the matrix of the Poincar\'e pairing in this basis, i.e., $M_{i,j}\chi$ is the $0$-homogeneous component of $\phi_i*\phi_j$. Then for any $A,B\in\calK(\RR^n)$ one has
\begin{align}
\label{eq:PKF}
\int_{\b G}\chi(A\cap\b gB)d\b g=\sum_{j,k=1}^N(M\inv)_{j,k}\phi_j(A)\phi_k(B).
\end{align}
\end{theorem}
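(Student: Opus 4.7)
My plan for proving Theorem~\ref{thm:ftaig} uses two key ingredients: a reproducing-kernel identity linking evaluation of a valuation at a convex body to the convolution Poincar\'e pairing, and a Fubini rewriting of the intersectional kinematic integral.

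The reproducing-kernel identity I would establish is: for every $\phi\in\Val^\infty$ and every $A\in\calK(\RR^n)$, the degree-$0$ component of $\phi*\phi_A$ equals $\phi(A)\,\chi$, where $\phi_A(K):=\vol_n(K+A)$. Both sides depend linearly and continuously on $\phi$ (the left-hand side tautologically, and the right-hand side by continuity of the Bernig--Fu convolution on $\Val^\infty$), so by the density of $\{\phi_B\}_{B\in\calK(\RR^n)}$ in $\Val$ (Section~\ref{s:background}) it suffices to verify the identity when $\phi=\phi_B$. There one has $\phi_B*\phi_A=\phi_{A+B}$, whose degree-$0$ component is $\vol_n(A+B)\,\chi$, while $\phi_B(A)=\vol_n(A+B)$, so both sides agree.

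For the main computation, I would write $\b g=(g,x)\in G\ltimes\RR^n$ and use the convexity of $A\cap(gB+x)$ to rewrite $\chi(A\cap\b gB)=\mathbf{1}[x\in A-gB]$. Fubini then yields
\begin{align*}
\int_{\b G}\chi(A\cap\b gB)\,d\b g=\int_G\vol_n(A-gB)\,dg=\Phi_B(A),\qquad\Phi_B:=\int_G\phi_{-gB}\,dg\in\Val^G.
\end{align*}
Expanding $\Phi_B=\sum_l d_l(B)\phi_l$ in the given basis and introducing the dual basis $\phi^l:=\sum_k(M^{-1})_{l,k}\phi_k$ with respect to the convolution pairing, the reproducing-kernel identity combined with $G$-invariance produces
\begin{align*}
d_l(B)=\text{coef.~of $\chi$ in $\phi^l*\Phi_B$}=\int_G\phi^l(-gB)\,dg=\phi^l(B)=\sum_k(M^{-1})_{l,k}\phi_k(B),
\end{align*}
and the symmetry of $M$ (from commutativity of convolution) gives the claimed formula.

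The principal obstacle is establishing the reproducing-kernel identity, which rests on the density result of McMullen and on the Fr\'echet continuity of the Bernig--Fu convolution on $\Val^\infty$. A secondary technical point is parity bookkeeping: the equality $\int_G\phi^l(-gB)\,dg=\phi^l(B)$ uses that $\phi^l$ is either invariant under $-\mathrm{id}$ (when $-\mathrm{id}\in G$, as is the case for $\Spin(9)\subset\SO(16)$) or even; since the convolution pairing is block-diagonal with respect to $\Val^G=\Val^{G,+}\oplus\Val^{G,-}$, the odd part is obtained by the same argument with appropriate signs, so the final identity is unchanged.
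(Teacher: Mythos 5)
The paper does not prove Theorem~\ref{thm:ftaig}; it is cited from Fu, so there is no in-paper argument to compare against. Your approach---Fubini to convert the intersectional integral into $\int_G\vol_n(A-gB)\,dg$, the reproducing-kernel identity for the $0$-homogeneous component of $\phi*\vol_n(\cdot+A)$, and expansion in the dual basis for the Poincar\'e pairing---is the standard route and the bulk of it is correct.

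However, the parity paragraph contains a genuine gap. You correctly identify that the crucial step $\int_G\phi^l(-gB)\,dg=\phi^l(B)$ requires $\phi^l$ to be even (equivalently, $\phi^l(-B)=\phi^l(B)$, since $G$-invariance already reduces the integral to $\phi^l(-B)$). But your proposed fix---``the odd part is obtained by the same argument with appropriate signs, so the final identity is unchanged''---does not actually work. Tracing the signs: if $\phi^l$ were odd, then $d_l(B)=\phi^l(-B)=-\phi^l(B)$, so your derivation yields $\Phi_B(A)=\sum_l\phi^l(-B)\,\phi_l(A)$, which equals $\sum_{j,k}(M^{-1})_{j,k}\phi_j(A)\phi_k(-B)$. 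On the hypothetical odd block this differs by a global sign from the stated formula $\sum_{j,k}(M^{-1})_{j,k}\phi_j(A)\phi_k(B)$; block-diagonality of $M$ does not cancel that sign, it just confines it to the odd block. So if $\Val^{G,-}\neq 0$, your argument would disprove the theorem as stated rather than prove it. The correct resolution is the nontrivial but standard fact that $\Val^G\subset\Val^+$ for every closed $G\subset\SO(n)$ acting transitively on $S^{n-1}$ (it is \emph{not} simply a consequence of $-\mathrm{id}\in G$: for instance $-\mathrm{id}\notin \G_2$, yet $\Val^{\G_2}$ is even). You should either invoke this fact explicitly or restrict the theorem to the even setting; for $G=\Spin(9)$ your remark that $-\mathrm{id}\in\Spin(9)\subset\SO(16)$ (the nontrivial central element acts as $-\mathrm{id}$ in the spin representation) does suffice, but the theorem is stated for general transitive $G$.

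Two smaller points worth flagging, though they do not affect correctness: the exchange $\phi^l*\int_G\phi_{-gB}\,dg=\int_G\phi^l*\phi_{-gB}\,dg$ and the use of the reproducing-kernel identity with a not-necessarily-smooth body $B$ both rely on the extension of the convolution to $\Val^\infty\times\Val$ via $\phi*\vol_n(\cdot+A)=\phi(\cdot+A)$ (or, equivalently, on approximating $B$ by smooth strictly convex bodies and passing to the limit). A brief sentence acknowledging this would tighten the argument.
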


\subsection{The octonions}
\label{ss:OO}

 Below we will consider $\Val^G$  in the case   $G=\Spin(9)$ in some detail. For a  useful concrete description of this Lie group as well as important subgroups and their invariants, let us recall here basic facts about the algebra of octonions. For reference and more about the octonions and related topics we refer  to \cite{Harvey:Spinors,Baez:Octonions,SalamonWalpuski:Notes}.

The octonionic multiplication is a bilinear product on $\RR^8$ that admits a unit and is compatible with the standard Euclidean structure. According to the Hurwitz theorem, such a structure only exists in dimension $d\in\{1,2,4,8\}$ and in each case it is unique up to an isomorphism. Explicitly, one possible way to define the algebra $\OO$ of octonions which we will adopt here is as follows: Denoting by  $e_0,e_1,e_2,\dots,e_7$ the standard (orthonormal) basis of $\RR^8$, we impose
\begin{enuma}
\item $e_0^2=e_0$,
\item $e_0e_i=e_ie_0=e_i$ and $e_i^2=-e_0$ for $i\in\{1,\dots,7\}$,
\item $e_ie_j=-e_je_i$ for $i,j\in \{1,\dots,7\}$ with $i\neq j$, and
\item $e_1e_2=e_4$, $e_2e_4=e_1$, $e_4e_1=e_2$, and so forth, according to the following scheme:
\begin{align*}
\begin{matrix}1&2&3&4&5&6&7\\2&3&4&5&6&7&1\\4&5&6&7&1&2&3\end{matrix}
\end{align*}
\end{enuma}
which we hope is self-explanatory (and easy to memorize). It is straightforward to verify that
\begin{align}
\label{eq:nda}
|xy|^2=|x|^2|y|^2,\quad  x,y\in\OO.
\end{align}
Observe that $\OO$ is nor commutative neither associative.   As usual, we will  interchangeably use the symbols $e_0$ and $1$. Then one has the orthogonal decomposition $\OO=\RR\oplus\Imag\OO$, where $\Imag\OO=1^\perp$ is the subspace of imaginary octonions. Let $\Real\maps{\OO}{\RR}$ and $\Imag\maps{\OO}{\Imag\OO}$ be the orthogonal projections. We define the conjugation of $x\in\OO$ as $\b x=\Real(x)-\Imag(x)$. Conversely, the real and imaginary part are $\Real(x)=\frac12(x+\b x)$ and $\Imag(x)=\frac12(x-\b x)$, respectively. Also note that
	\begin{align}
	\label{eq:xybar}
	\b{xy}=\b y\, \b x.
	\end{align}
and 
\begin{align}
\label{eq:ipbar}
\ip xy=\Real(\b xy)=\Real(x\b y)=\ip{\b x}{\b y}.
\end{align}
In particular, $\sqnorm{x}=\b x x=x\b x$ and hence each $x\neq0$ has a multiplicative inverse.

Important non-trivial algebraic relations hold in $\OO$ in consequence of \eqref{eq:nda}. Namely, for $x,y,z,w\in\OO$ the linear operators $R_w\mape{x}{xw}$ and $ L_w\mape{x}{wx}$ satisfy
\begin{align}
\label{eq:RwRz}
R_w^*=R_{\b w},\quad L_w^*=L_{\b w}, \quad\text{and}\quad R_{\b w}R_z+R_{\b z}R_w=L_{\b w}L_z+L_{\b z}L_w=2\ip wz\id.
\end{align}
Moreover, the associator $[x,y,z]=(xy)z-x(yz)$ is an alternating trilinear map $\OO\times\OO\times\OO\to\OO$ and the famous Moufang identities hold:
\begin{align*}
x(y(xz))&=(xyx)z,\\
((zx)y)x&=z(xyx),\\
(xy)(zx)&=x(yz)x.
\end{align*}
Notice that no additional brackets are needed here as the corresponding associators vanish. Clearly, the associator is also trivial when at least one of its arguments is real. Consequently,
\begin{align}
\label{eq:assocbar}
[x,y,z]=-[\b x,y,z].
\end{align}
In fact, according to the celebrated theorem of Artin, any subalgebra of $\OO$ generated by two elements is associative. Observe that such a subalgebra is in fact generated by imaginary parts of the respective elements or, equivalently, by the two elements {\it and} their conjugates.

\subsection{The Lie groups $\Spin(9)$ and $\Spin(7)$}
\label{ss:Spin97}

Recall that $\Spin(n)$, $n\geq3$, is the two-fold universal covering group of $\SO(n)$. There is a unified approach via Clifford algebras in which all the $\Spin$ groups are constructed simultaneously. Remarkably, for certain $n$ this abstract construction can be made fairly explicit using the contents of the previous section. We will recall here how the group structure can be encoded into the octonionic multiplication if $n=7$ or $n=9$, following \cite[Chapter 14]{Harvey:Spinors}.

To begin with the latter, \cite[Lemma 14.77]{Harvey:Spinors} allows us to define $\Spin(9)$ to be the subgroup of $\GL(\OO^2)$ generated by
\begin{align}
\label{eq:Spin9}
\left\{\begin{pmatrix}r&R_{x}\\R_{\b x}&-r\end{pmatrix}\mid r\in\RR,x\in\OO,r^2+\sqnorm x=1\right\}.
\end{align}
Notice that the generators act on $\OO^2$ blockwise and from the left, as $2\times2$ block operators. Using that they clearly preserve the standard inner product on $\OO^2=\RR^{16}$, one sees that $\Spin(9)\subset\SO(16)$ in fact. Observe also that $\Spin(9)$ indeed acts transitively on $S^{15}\subset\OO^2$ as for any $x,y\in\OO$ with $\sqnorm x+\sqnorm y=1$ one has
\begin{align*}
\begin{pmatrix}x\\y\end{pmatrix}=\begin{cases}\begin{pmatrix}0&R_{x}\\R_{\b x}&0\end{pmatrix}\begin{pmatrix}0&1\\1&0\end{pmatrix}\begin{pmatrix}1\\0\end{pmatrix} &\text{if }y=0,\\[2ex]
\begin{pmatrix}0&R_{\frac{\b y}{\norm y}}\\R_{\frac{y}{\norm y}}&0\end{pmatrix}\begin{pmatrix}\norm{y}&R_{\frac{\b y\,\b x}{\norm y}}\\R_{\frac{xy}{\norm y}}&-\norm{y}\end{pmatrix}\begin{pmatrix}1\\0\end{pmatrix}&\text{if }y\neq0.
\end{cases}
\end{align*}

Let us recall  an equivalent definition of $\Spin(9)$. Using the generators \eqref{eq:Spin9}, it is easy to see that the group preserves the set $\OO P^1=\big\{O_a\mid a\in\OO\cup\{\infty\}\big\}\cong S^8$ of octonionic lines, where \begin{align*}
O_a=\left\{\begin{pmatrix}x\\xa\end{pmatrix}\in\OO^2\mid x\in\OO\right\}, a\in\OO,\quad \text{and}\quad 
O_\infty=\left\{\begin{pmatrix}0\\x\end{pmatrix}\in\OO^2\mid x\in\OO\right\}.
\end{align*}
Over this base, the  Hopf fibration $S^{15}\to \OO P^1$ is given by projecting a point on $S^{15}\subset\OO^2$ to the octonionic line it belongs. Observe that such a line always exists and it is unique as two distinct lines intersect only at the origin. Elements of $\Spin(9)$ can thus be viewed as symmetries of the fibration.  In fact, according to the theorem of Gluck--Warner--Ziller \cite{GWZ:Hopf},  a rotation $g\in\SO(16)$ is an element of $\Spin(9)$ if and only if $g$ takes octonionic lines to octonionic lines.

Proceeding to the case $n=7$, according to \cite[Lemma 14.66]{Harvey:Spinors}, $\Spin(7)$ can be defined as the subgroup of $\GL(\OO)$ generated by
\begin{align}
\label{eq:Spin7}
\{L_u\mid u\in\Imag\OO,|u|=1\}.
\end{align}
Take $x\in\OO$ with $|x|=1$. First, observe that $L_x^*=L_x^{-1}$ and, since $\det(L_1)=1$ and $S^7\subset\OO$ is connected, $\det(L_x)=1$ (and similarly for $R_x$). Hence $\Spin(7)\subset\SO(\OO)$. Second, one has $x=L_{u}L_v(1)$ if $v=-\Real(x)u-u\Imag(x)$ and $u\in\Imag\OO$ is chosen subject to $|u|=1$ and $\ip{u}{\Imag(x)}=0$, in which case $v\in\Imag\OO$ and $|v|=1$. Therefore, $\Spin(7)$ acts transitively on $S^7$. The vector representation of $\Spin(7)$ is defined by 
	\begin{align}
	\label{eq:chig}
	\chi_g(u)=g\!\left(u\cdot g^{-1}(1)\right), \quad u\in \Imag \OO.
	\end{align}
Extending $\chi_g$ to $\OO$ with $\chi_g(1)=1$, the relation 
\begin{align}
\label{eq:Spin7a}
g(xy)=\chi_g(x)g(y).
\end{align}
holds  for any $g\in\Spin(7)$ and $x,y\in\OO$. 
This relation, which will be used repeatedly in the sequel (in particular, the results of Section \ref{s:InvSpin7} directly reflect it), can in fact serve as an equivalent  definition of $\Spin(7)$, see \cite[Lemma 14.61]{Harvey:Spinors}.

There is one more important characterization of $\Spin(7)$. Namely, assume $w,x,y,z\in\OO$ and denote $x\times  y\times z=\frac12\big[(x\b y)z-(z\b y)x\big]$. Then the Cayley calibration is defined as
\begin{align}
\label{eq:CCdef}
\Phi(w,x,y,z)=\ip{w}{x\times y\times z}.
\end{align}
It is straightforward to verify $\Phi\in\largewedge^4\OO^*$ and, using the Moufang identities, that it is $\Spin(7)$-invariant. In fact, $\Spin(7)=\{g\in\GL(\OO)\mid g^*\Phi=\Phi\}$. For details see \cite{HarveyLawson:Calibrated,Bryant:Exceptional}.

\begin{remark}
Strictly speaking, our definition of $\Spin(7)$ differs from the conventions of \cite{Harvey:Spinors,HarveyLawson:Calibrated} where $L_u$ is replaced by $R_u$ in \eqref{eq:Spin7}, \eqref{eq:Spin7a}  by $g(xy)=g(x)\chi_g(y)$ with $\chi_g(y)=g(g^{-1}(1)\cdot y)$, and the Cayley calibration is defined via $x\times  y\times z=\frac12\big[x(\b yz)-z(\b yx)\big]$. However, it is easily seen that this group is just $C\cdot \Spin(7)\cdot C$, where  $ C(x)=\b x$ denotes conjugation in $\OO$.
\end{remark}

Let us conclude this section by explaining why the group $\Spin(7)$ is relevant to our work at all. Observe that for  any $g\in \Spin(7)$ the rotation
\begin{align*}
\begin{pmatrix}\chi_g&0\\0&g\end{pmatrix}\in\SO(\OO^2)
\end{align*}
fixes the point $p=\big(\begin{smallmatrix}1\\0\end{smallmatrix}\big)\in S^{15}$ and belongs by triality \cite[Theorem 14.19]{Harvey:Spinors} to $\Spin(9)$. In fact, according to \cite[Theorem 14.79]{Harvey:Spinors},
\begin{align}
\label{eq:stab}
\Stab_p\Spin(9)=\left\{\begin{pmatrix}\chi_g&0\\0&g\end{pmatrix}\mid g\in\Spin(7)\right\},
\end{align}
i.e., $\Spin(9)/\Spin(7)\cong S^{15}$. Consequently, as $\Spin(7)$-representations,
\begin{align}
\label{eq:O2underSpin7}
\OO^2\cong\RR\oplus\Imag\OO\oplus\OO\quad\text{and}\quad T_pS^{15}\cong\Imag\OO\oplus\OO.
\end{align}

\subsection{The Maurer--Cartan form}
Let us collect here serveral basic facts about the Mauer--Cartan form  $\omega\in \Omega^1(G,\mathfrak g)$, which is a  is Lie-algebra-valued $1$-form on a Lie group $G$. In the special case---which is  sufficient for our purposes---where $G$ is Lie subgroup of $\GL(\RR^n)$ the Maurer--Cartan form is given by
$$ \omega_g= g^{-1}dg, \quad g\in G.$$
It is immediate from this definition that 
$$ L_g^* \omega=\omega \quad \text{and} \quad R_g^* \omega= g^{-1} \omega g,$$
where $L_g$ and $R_g$ denote, respectively, left and right translation by $g \in G$. Moreover, $\omega$ satisfies the Maurer--Cartan equation
$d\omega= - \omega \wedge \omega,$ where $(\omega\wedge \omega)(X,Y)=\omega(X)\omega(Y)- \omega(Y)\omega(X)$ in $\End(\RR^n)$.

Below we are going to apply the above to a group $G= K\ltimes \RR^n$, where $K$ is a Lie subgroup of $\GL(\RR^n)$. Identifying $G$ with $$\left \{ \begin{pmatrix} k &  x \\ 0 & 1\end{pmatrix} \mid k\in K, \ x\in \RR^n\right\} \subset \GL(\RR^{n+1})$$
we see that the Maurer--Cartan form of $G$ is $\omega=(k^{-1}dk, k^{-1} dx)=(\varphi, \theta)$ with $\varphi\in \Omega^1(G,\mathfrak k)$ and $\theta
\in \Omega^1(G,\RR^n)$. The Maurer--Cartan equation can thus be expressed as
\begin{equation}\label{eq:MaurerCartan}  d\varphi = -\varphi\wedge \varphi \quad \text{and}\quad d\theta=- \varphi \wedge \theta.\end{equation}
The behavior under right translations specializes to 
\begin{equation}\label{eq:rightTranslations} R_{(k,0)}^* \varphi= k^{-1} \varphi k\quad \text{and} \quad R_{(k,0)}^*\theta=k^{-1} \theta.\end{equation}

\subsection{An excursion  into  commutative algebra}

\label{ss:CA}

To conclude the background section, let us recall general algebraic constructions which will later be applied in the particular setting of Theorem \ref{thm:main}.  Although we are not aware of any reference, we believe that the statements below are most probably well known. For background from commutative algebra see, e.g., \cite{AtiyahMacDonald:Introduction}.

Recall that any finitely generated, commutative, associative algebra over a field $K$ arises as a quotient of the polynomial ring  by an ideal $I$, which is also finitely generated by Hilbert's basis theorem. The following lemma immediately translates into a computationally cheap algorithm that yields a (possibly infinite) generating set of $I$. 

\begin{lemma}
\label{lem:ideal}
Let $\calA=\bigoplus_{k=0}^\infty \calA_k$ be a commutative, associative, graded algebra with identity element, (finitely) generated by  $\hat g_1,\dots,\hat g_m\in\calA$ with $\deg \hat g_i\in\NN$. Consider the graded algebra
\begin{align*}
R=\bigoplus_{k=0}^\infty R_k=K[ g_1,\dots,g_m]
\end{align*}
and the obvious (graded) morphism $\hat{\Cdot}:R\to\calA$. For $k\in\NN$ denote $G_k=\{g_i\mid \deg g_i=k\}$ and construct inductively subsets $A_k,B_k,F_k\subset R_k$ as follows: Set $B_0=\{1\}$ and
\begin{align*}
A_k&=\{xy\mid x\in B_l,y\in G_{k-l},0\leq l<k\},\\
B_k&=\{x_{k,1},\dots,x_{k,b_k}\}\subset A_k\text{ is a maximal subset such that }\hat B_k\subset \calA_k\text{ is linearly independent,}\\
F_k&=\left\{ x-\sum_{i=1}^{b_k}\lambda_{k,i}(x) x_{k,i}\mid x\in A_k\setminus B_k\right\},
\end{align*}
where $\lambda_{k,i}(x)\in K$ are the unique constants such that $\hat x=\sum_{i=1}^{b_k}\lambda_{k,i}(x)\hat x_{k,i}$. Then $\calA\cong R/I$ as graded algebras, where $I\subset R$ is the ideal generated by $\bigcup_{k=1}^\infty F_k$,  and $\hat B_k$ is a basis of $\calA_k$.
\end{lemma}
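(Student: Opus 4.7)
The plan is to prove two statements by induction on $k \geq 0$: (a) $\hat B_k$ is a basis of $\calA_k$; and (b) modulo the ideal $I$, every element of $R_k$ is a $K$-linear combination of elements of $B_k$. Granted these, one checks directly from the construction that $I \subseteq J := \ker \hat{\Cdot}$, since for each $x \in A_k \setminus B_k$ the generator $x - \sum_i \lambda_{k,i}(x)\, x_{k,i}$ of $F_k$ maps to $\hat x - \sum_i \lambda_{k,i}(x)\, \hat x_{k,i} = 0$ by the very definition of $\lambda_{k,i}(x)$. The canonical surjection $R/I \twoheadrightarrow R/J \cong \calA$, combined with the bound $\dim_K (R/I)_k \leq |B_k| = \dim_K \calA_k$ supplied by (b) and (a), then forces equality in every degree, so $I = J$ and $\calA \cong R/I$ as graded algebras.

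For (a), the base case is immediate: since $\deg g_i \geq 1$ for all $i$, the algebra $\calA_0$ is spanned by the identity alone and $\hat B_0 = \{1\}$ is a basis. For the inductive step, assume (a) holds in all degrees $l < k$. Linear independence of $\hat B_k$ is built into the definition of $B_k$, so only spanning remains to be proved; by the maximality clause in the definition of $B_k$ it suffices to show that $\hat A_k$ spans $\calA_k$. Any element of $\calA_k$ is a $K$-linear combination of images $\hat g_{i_1}\cdots \hat g_{i_r}$ of monomials of total degree $k$. Factoring such a monomial as $(\hat g_{i_1}\cdots \hat g_{i_{r-1}}) \cdot \hat g_{i_r}$, the first factor lies in $\calA_{k - \deg g_{i_r}}$ and is, by the inductive hypothesis, a linear combination of elements of $\hat B_{k - \deg g_{i_r}}$. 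Hence the monomial lies in the span of $\{\hat b \cdot \hat g_{i_r} : b \in B_{k - \deg g_{i_r}}\} \subset \hat A_k$, completing the step.

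For (b), the argument is formally parallel, now carried out in $R/I$. The case $k=0$ reduces to $R_0 = K\cdot 1$. For the inductive step, each monomial $g_{i_1} \cdots g_{i_r} \in R_k$ is factored as $(g_{i_1}\cdots g_{i_{r-1}})\cdot g_{i_r}$; the inductive hypothesis (b) applied to the first factor, together with commutativity of $R$, shows that modulo $I$ the monomial is a linear combination of elements of $A_k$. The defining relations $x \equiv \sum_i \lambda_{k,i}(x)\,x_{k,i} \pmod{I}$ for $x \in A_k \setminus B_k$ then reduce this further to a combination of $B_k$, as desired.

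I expect no serious obstacle: the proof is a careful but essentially routine bookkeeping exercise, and the key structural inputs—that $R$ is a polynomial ring (so every element is a linear combination of monomials) and that $\hat{\Cdot}$ is a surjective graded morphism—are immediate. The only subtlety worth being alert about is that uniqueness of the coefficients $\lambda_{k,i}(x)$ appearing in the definition of $F_k$ rests on the linear independence of $\hat B_k$, which is itself built into the very construction of $B_k$.
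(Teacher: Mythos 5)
Your proof is correct and follows essentially the same strategy as the paper: prove by induction that every degree-$k$ element of $R/I$ is a linear combination of (classes of) $B_k$, then combine the resulting dimension bound with the surjection $R/I\twoheadrightarrow\calA$ to force equality in each degree. The one minor difference is that you run a separate induction to show $\hat B_k$ spans $\calA_k$ before invoking the dimension count, whereas the paper only needs the free bound $|B_k|\leq\dim_K\calA_k$ coming from linear independence of $\hat B_k$ and then extracts the spanning statement as a byproduct of the squeeze; your extra step is harmless but redundant.
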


\begin{proof}
First of all, observe that for each $k\in\NN$, $\hat B_k$ is a basis of $\linspan\hat A_k$ which implies the existence and uniqueness of the constants $\lambda_{k,i}(x)$ for any $x\in A_k$. 

Since $\hat R=\calA$ and $\hat I=0$, we have an epimorphism $R/I \to\calA$. Denote by $x\mapsto \wt x$ the quotient map $R\to R/I$. We will show by induction that $\wt R_k=\linspan\wt B_k$ holds for any $k\in\NN_0$ which implies $\dim \wt R_k\leq\dim\calA_k$ and therefore yields the claim.

Observe that $\wt A_k\subset \linspan\wt B_k$ holds for $k\in\NN$. Indeed, for each $x\in A_k$ one has either $x\in B_k$ or $x\in A_k\setminus B_k$. In the latter case, $\wt x=\sum_{i=1}^{b_k}\lambda_{k,i}(x)\wt x_{k,i}\in \linspan\wt B_k$ since $x-\sum_{i=1}^{b_k}\lambda_{k,i}(x) x_{k,i}\in I$.

Let us prove $\wt R_k=\linspan\wt B_k$. $\wt R_0=\linspan\wt B_0$ is trivial so assume $k\geq1$ and that $\wt R_l=\linspan\wt B_l$ holds for all $l<k$. It suffices to show that for each monomial $z=g_{i_1}g_{i_2}\cdots g_{i_K}\in R_k$ one has $\wt z\in\linspan \wt B_k$. Clearly, $z=xy$ for some $x\in R_l$, $y\in G_{k-l}$, and $0\leq l<k$. By the induction hypothesis, there are $\alpha_i\in\RR$ with $\wt x=\sum_{i=1}^{b_l}\alpha_i \wt x_{l,i}$. Obviously, $x_{l,i}y\in A_k$ and thus $\wt z\in\linspan\wt A_k\subset\linspan\wt B_k$.
\end{proof}

The so constructed generating set $\bigcup_{k=1}^\infty F_k$ of $I$ is by no means minimal. One way to reduce it is  to remove redundant monomials from $A_k$. Moreover,  by considering on $R$ a total monomial order, we have more control over which monomials end up in $B_k$. Recall that the order extends to any family of ordered sets of monomials of fixed cardinality $K<\infty$ in the lexicographical manner, namely,
\begin{align*}
\{x_1,\dots,x_K\}>\{y_1,\dots,y_K\}\text{ if there is }k\leq K\text{ with }x_i=y_i\text{ for }i<k\text{ and }x_k>y_k.
\end{align*}

\begin{corollary}
\label{cor:ideal}
Fix a monomial order on $R$. Then Lemma \ref{lem:ideal} still holds if $A_k$ is replaced by
\begin{align*}
A_k&=\{xy\mid x\in B_l,y\in G_{k-l},0\leq l<k\}\setminus\{uv\mid u\in A_m\setminus B_m,v\in G_{k-m}, 0<m<k\},
\end{align*}
and $B_k$ is chosen to be of  maximal order.
\end{corollary}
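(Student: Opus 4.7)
The plan is to follow the proof of Lemma \ref{lem:ideal} essentially verbatim. The global structure is to establish by strong induction on $k$ that $\wt R_k = \linspan \wt B_k$ in $R/I$; combined with the automatic surjection $R/I \twoheadrightarrow \calA$ and the linear independence of $\hat B_k$, this yields both the algebra isomorphism and the basis statement. The only step affected by the shrinking of $A_k$ is the reduction inside the inductive argument, and this is where the monomial order enters essentially.

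The key new observation concerns the modified $B_m$. Being of maximal order among all maximal independent subsets of $A_m$, it can be obtained by the greedy procedure that processes $A_m$ in decreasing order of $\prec$ and admits each element into $B_m$ precisely when its image in $\calA$ remains linearly independent of the images of the already-admitted (hence $\prec$-larger) elements. Consequently, for every $u \in A_m \setminus B_m$, the coefficients in the unique representation $\hat u = \sum_i \lambda_{m,i}(u) \hat x_{m,i}$ vanish unless $x_{m,i} \succ u$. Using this, I would prove by reverse induction on $\prec$ within the finite set of degree-$k$ monomials the auxiliary claim that $\wt z \in \linspan \wt A_k$ in $R/I$ for every $z$ in the pre-removal set $A_k^{\mathrm{pre}} := \{xy \mid x \in B_l,\, y \in G_{k-l},\, 0 \leq l < k\}$. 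If $z \in A_k$ the claim is trivial; otherwise $z = uv$ with $u \in A_m \setminus B_m$ and $v \in G_{k-m}$ by the removal condition, so the relation $u - \sum_i \lambda_{m,i}(u) x_{m,i} \in F_m$ multiplied by $v$ yields $\wt z = \sum_i \lambda_{m,i}(u) \wt{x_{m,i} v}$ in $R/I$, with each $x_{m,i} v \in A_k^{\mathrm{pre}}$ strictly $\succ$-exceeding $z$ by multiplicativity of the monomial order, so the induction hypothesis applies.

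The main obstacle is precisely the termination of this cascade of reductions among removed elements of the same degree: without the monomial-order input, a removed $z$ might in principle reduce to a combination of other removed elements and loop indefinitely. The greedy choice of $B_m$ breaks the cascade by forcing each reduction strictly to increase the monomial, and since only finitely many degree-$k$ monomials exist, termination is guaranteed. With the auxiliary claim in place, the remainder follows Lemma \ref{lem:ideal} unchanged: for a monomial $z = xy \in R_k$ with $y \in G_{k-l}$, the inductive hypothesis on $\wt x$ gives $\wt z = \sum_i \alpha_i \wt{x_{l,i} y}$, each $x_{l,i} y$ lies in $A_k^{\mathrm{pre}}$, and by the auxiliary claim together with the $F_k$ relations we conclude $\wt z \in \linspan \wt B_k$.
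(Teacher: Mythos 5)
Your proof is correct and is essentially the paper's own argument: you reduce to showing that every product $xy$ with $x\in B_l$, $y\in G_{k-l}$ lies in $\linspan\wt A_k$, and you use the greedy characterization of the maximal-order $B_m$ together with multiplicativity of the monomial order to push removed elements to strictly larger monomials. The paper phrases the termination step as a contradiction with a counterexample of maximal order rather than your downward induction on the finitely many degree-$k$ monomials, but this is only a cosmetic difference.
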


\begin{proof}
It suffices to prove that $\wt x\wt y\in \linspan\wt A_k$ for any $x\in B_l$, $y\in G_{k-l}$ and $0\leq l<k$. Indeed, this fact implies $\wt x_{l,i}\wt y\in \linspan\wt A_k$ and hence $\wt z\in\linspan\wt A_k\subset\linspan\wt B_k$ in the very last step of the proof of Lemma \ref{lem:ideal}. The rest of the proof clearly remains valid.


To prove the claim assume that there exist  $x\in B_l$, $y\in G_{k-l}$ with $0\leq l<k$ such that $\wt x\wt y\notin \linspan\wt A_k$. We may further assume that $xy$ has maximal order among all monomials with this property. By the definition of $A_k$, there exist $u\in A_m\setminus B_m$, $v\in G_{k-m}$ with $0<m<k$ such that $xy=uv$. By the definition of $I$, 
we have $\wt u = \sum_{i=1}^{b_m} \lambda_{m,i}(u) \wt x_{m,i}$. Since $B_m$ was chosen to be of  maximal order we have $ u< x_{m,i}$ and, consequently, $ uv <x_{m,i} v$ if $\lambda_{m,i}(u)\neq 0$. The  maximality of $xy$ then implies $\wt x_{m,i} \wt v\in \linspan \wt A_k$ for such $i$. Thus
$$ \wt x\wt y =  \sum_{i=1}^{b_m} \lambda_{m,i}(u) \wt x_{m,i} \wt v \in  \linspan \wt A_k,$$
contradicting our assumption.
\end{proof}

\begin{remark}
\label{rem:IdealBasis}
Observe that the generating set $\bigcup_{k=1}^\infty F_k$ of $I$ constructed either in Lemma \ref{lem:ideal} or in Corollary \ref{cor:ideal} is finite if $\calA$ is finite-dimensional (as a vector space over $K$).
\end{remark}

 Let us recall a particular order on  $R= K[g_1,\dots,g_m]$ we will use. As usual, we abbreviate $g^a=g_1^{a_1}\cdots g_m^{a_m}$ for $a=(a_1,\dots,a_m)\in\NN_0^m$. The graded lexicographical monomial order on $R$ is given by declaring for any $a,b\in\NN_0^m$ that
\begin{align*}
g^a>g^b \text{ if the first nonzero entry of }\left(\sum_{i=1}^m (a_i-b_i)\deg g_i,a_1-b_1,\dots,a_m-b_m\right) \text{ is positive}.
\end{align*}

Finally, using a modification of the argument of Lemma \ref{lem:ideal},  it is straightforward to verify that any finite generating set of the ideal $I$ can be reduced to a minimal one as follows.

\begin{lemma}
\label{lem:minimal}
Let $I=(f_1,\dots,f_r)$ be a homogeneous ideal in $K[ g_1,\dots,g_m]=\bigoplus_{k=0}^\infty R_k$ where $\deg g_i \in\NN$. For $k\in\NN_0$ set $ G_k= \{ g_i\colon \deg g_i =k\}$ and $F_k=\{ f_i \colon \deg f_i = k\}$, and construct inductively subsets $A_k, B_k',B_k,M_k\subset R_k$ as follows:
	\begin{align*}
		A_k&=\{xy\mid x\in B_l,y\in G_{k-l},0\leq l<k\},\\
		B_k'&\subset A_k\text{ is a maximal  linearly independent subset},\\
		M_k&\subset F_k \text{ is maximal with the property that }B_k:=B'_k \cup M_k \text{ is linearly independent}.
	\end{align*}
Then $M=\bigcup_{k=1}^\infty M_k$ is a minimal set of homogeneous generators of $I$.	
\end{lemma}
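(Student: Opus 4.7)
The plan is to proceed by induction on $k$ in the style of Lemma~\ref{lem:ideal}, clarifying along the way the precise sense in which ``linear independence'' is to be understood. Writing $J^{<k}$ for the ideal generated by $\bigcup_{l<k}M_l$, the key observation is that the maximality conditions in the construction are effectively statements modulo $J^{<k}$: $B_k'$ picks out a basis of $(R/I)_k$ from within $A_k$, while $M_k$ picks out a basis of $(I/J^{<k})_k$ from within $F_k$.

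To show that $M=\bigcup_k M_k$ generates $I$, I would prove by induction on $k$ that every $f_i \in F_k$ lies in the ideal generated by $M_{\le k}$. If $f_i \in M_k$ there is nothing to show; otherwise maximality of $M_k$ produces a linear dependence of $B_k' \cup M_k \cup \{f_i\}$. Combining this with the linear independence of $B_k' \cup M_k$, together with the fact that $f_i \in I_k$ while $\linspan B_k'$ intersects $I_k$ trivially modulo $J^{<k}$ (by the inductive claim that $B_k'$ projects to a basis of $(R/I)_k$), one concludes $f_i \in \linspan M_k + J^{<k}_k$, and the induction hypothesis takes care of the $J^{<k}_k$-contribution.

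For the minimality claim, suppose some $m \in M_k$ lay in the ideal generated by $M \setminus \{m\}$. Extracting the degree-$k$ component, $m$ would take the form $\sum \alpha_{m'} m' + r$ with $m' \in M_k \setminus \{m\}$ and $r \in J^{<k}_k$. Reducing modulo $J^{<k}$ would then give a nontrivial linear relation among the images of $M_k$ in $R_k/J^{<k}_k$, contradicting the linear independence of $B_k' \cup M_k$ modulo $J^{<k}$ built into the construction. Hence no element of $M$ is redundant.

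The principal technical obstacle will be the inductive bookkeeping needed to reconcile the literal statement of the lemma (linear independence in $R_k$) with its effective meaning modulo $J^{<k}$: one must verify that the $A_k$ indeed capture enough products of previously chosen generators so that the test spaces $R_k$ and $R_k/J^{<k}_k$ yield the same maximal independent subsets in the construction of $B_k'$ and $M_k$. Once this identification is in place, the remainder of the proof is a straightforward extension of the reasoning already used for Lemma~\ref{lem:ideal}.
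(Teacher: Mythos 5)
Your overall strategy (prove generation by induction on the degree, prove minimality by reducing modulo the ideal $J^{<k}$ generated by the lower-degree elements of $M$) is the right one, but your ``key observation'' misidentifies what the construction actually produces, and the proof as sketched rests on that misidentification. In Lemma \ref{lem:minimal}, unlike in Lemma \ref{lem:ideal}, the sets $B_l$ are built out of the previously chosen ideal elements: the degree-zero step is vacuous, so inductively $B_l\subset I_l$, hence $A_k\subset I_k$ and $\linspan B_k'=\linspan A_k=\sum_{i<k}R_{k-i}\linspan B_i$. Thus $B_k'$ does not project to a basis of $(R/I)_k$; on the contrary, once one knows that $M_{<k}$ generates $I$ in degrees $<k$ and that $\linspan B_i=I_i$ for $i<k$, one has $\linspan B_k'=\sum_{i<k}R_{k-i}I_i=J^{<k}_k$, so every element of $B_k'$ vanishes modulo $J^{<k}$. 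Consequently your claim that linear independence of $B_k'\cup M_k$ modulo $J^{<k}$ is ``built into the construction'' is false, and the reconciliation you flag as the principal obstacle --- that $R_k$ and $R_k/J^{<k}_k$ yield the same maximal independent subsets for $B_k'$ --- cannot hold: $B_k'$ is independent in $R_k$ but is annihilated in the quotient. You appear to have carried over the meaning of $B_k$ from Lemma \ref{lem:ideal}, where it represents a basis of the quotient algebra; here it plays the opposite role.

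What must actually be proved --- and this is the substance of the argument --- are the two identities $\linspan A_k=\sum_{i=0}^{k-1}R_{k-i}B_i$ (by induction, peeling off one variable of a monomial at a time and using the definition of $A_k$) and $\linspan B_k=I_k$ (using that maximality of $M_k$ forces $F_k\subset\linspan B_k$). From these, $\linspan B_k'=J^{<k}_k$ once generation in lower degrees is known, and then the literal linear independence of $B_k'\cup M_k$ in $R_k$ does translate into linear independence of the images of $M_k$ in $I_k/J^{<k}_k$. With that corrected bridge, your two endgame steps go through essentially as you describe and essentially as in the intended proof: for generation, any $f\in F_k\setminus M_k$ satisfies $f\in\linspan(B_k'\cup M_k)\subset\linspan M_k+J^{<k}_k$; for minimality, if $m\in M_k$ lay in the ideal generated by $M\setminus\{m\}$, its degree-$k$ component would give $m\in\linspan(M_k\setminus\{m\})+J^{<k}_k=\linspan\bigl(B_k'\cup(M_k\setminus\{m\})\bigr)$, contradicting the independence of $B_k'\cup M_k$ in $R_k$. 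So the gap is not in the shape of the induction but in the identification of the roles of $B_k'$ and $M_k$; as written, the central claim your argument leans on is false.
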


\section{The Lie algebra $\spin(9)$ revisited}

\label{s:spin9}

An explicit description of the Lie algebra of $\Spin(9)$ is well known in terms of a basis. Namely, consider the following nine elements of the generating set \eqref{eq:Spin9}:
\begin{align*}
\I_i=\begin{pmatrix}0&R_{e_i}\\R_{\b{e_i}}&0\end{pmatrix}, i\in\{0,\dots,7\},\quad\text{and}\quad\I_8=\begin{pmatrix}1&0\\0&-1\end{pmatrix},
\end{align*}
and denote $\I_{i,j}=\I_i\I_j \in\End(\OO^2)$. Then
\begin{align*}
\spin(9)=\linspan\{\I_{i,j}\mid0\leq i<j\leq 8\},
\end{align*}
see \cite[Proposition 8]{PartonPiccinni:Spin9} and \cite[\S2.1]{CastrillonLopez:Canonical8Form}. The purpose of this section is to describe $\spin(9)$ in terms of linear relations that define it as a  subspace of $\End(\OO^2)$.

Identifying $\OO^2=\RR^{16}$, it will be convenient to use the following index convention:
\begin{align}
\label{eq:indexconvention}
x=\begin{pmatrix}x_0^0\\\vdots\\x_0^7\\[1ex]x_1^0\\\vdots\\x_1^7\end{pmatrix}\in  \OO^2
\quad\text{and}\quad
A=\begin{pmatrix}
A_{0,0}^{0,0}&\cdots&A_{0,0}^{0,7}&A_{0,1}^{0,0}&\cdots&A_{0,1}^{0,7}\\
\vdots&&\vdots&\vdots&&\vdots \\
A_{0,0}^{7,0}&\cdots&A_{0,0}^{7,7}&A_{0,1}^{7,0}&\cdots&A_{0,1}^{7,7}\\[1ex]
A_{1,0}^{0,0}&\cdots&A_{1,0}^{0,7}&A_{1,1}^{0,0}&\cdots&A_{1,1}^{0,7}\\
\vdots&&\vdots&\vdots&&\vdots \\
A_{1,0}^{7,0}&\cdots&A_{1,0}^{7,7}&A_{1,1}^{7,0}&\cdots&A_{1,1}^{7,7}
\end{pmatrix}\in  \End(\OO^2),
\end{align}
and to equip the set of indices corresponding to $\RR^8$ with certain algebraic structures naturally compatible with the algebra $\OO$. To this end, consider the set $\calJ=\{\pm0,\pm1,\dots,\pm7\}$ of sixteen formally distinct symbols, i.e., where also $0\neq -0$ and let us define two involutions on it. First, for $a\in\{0,\dots,7\}$ we put $-(\pm a)=\mp a$, second, for $a\in\calJ$,
\begin{align*}
\b a=\begin{cases}a,\quad&\text{if }a=\pm0,\\-a,&\text{if }a\neq\pm0.\end{cases}
\end{align*}
Extending these notions to the standard orthonormal basis of $\OO$ by $e_{-a}=-e_a$, it is obvious that $e_{\b a}=\b{e_a}$ holds for any $a\in\calJ$. Similarly we extend this to any matrix $A=(A^{a,b})_{a,b=0}^7$ by
\begin{align*}
A^{-a,b}=A^{a,-b}=-A^{a,b}=-A^{-a,-b},\quad a,b\in\{0,\dots,7\}.
\end{align*}
Finally, we define a product on $\calJ$: Let $a\cdot b=ab$ be the (unique) element of $\calJ$ with $e_{ab}=e_ae_b$. Importantly, the identity \eqref{eq:xybar} then holds verbatim also on $\calJ$.

\begin{lemma}
If $A\in\spin(9)$, then for all $a,b\in \{0,\dots,7\}$ and $k,l\in\{0,1\}$ one has
\begin{align}
\label{eq:so16}
A_{k,l}^{a,b}&=-A_{l,k}^{b,a},\\
\label{eq:spin9a}
A_{1,0}^{a,b}&=A_{1,0}^{\b ba,0},\\
\label{eq:spin9b}
A_{0,0}^{a,b}-A_{0,0}^{\b ba,0}&=A_{1,1}^{a,b}-A_{1,1}^{\b ba,0},\\
\label{eq:spin9c}
\sum_{c=0}^7A_{1,1}^{ac,c}&=4A_{0,0}^{a,0}.
\end{align}
\end{lemma}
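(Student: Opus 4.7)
The plan is to verify each of the four identities on the explicit generators $\I_{i,j}=\I_i\I_j$, $0\le i<j\le 8$, of $\spin(9)$, and then extend by linearity. The first identity \eqref{eq:so16} is simply the antisymmetry $A^T=-A$, and is immediate from $\Spin(9)\subset\SO(16)$. For the remaining three identities, which are linear conditions, it suffices to check each of them on every generator. To this end I would first compute
\begin{align*}
\I_{i,j}&=\begin{pmatrix} R_{e_i}R_{\b{e_j}} & 0 \\ 0 & R_{\b{e_i}}R_{e_j}\end{pmatrix},\quad 0\le i<j\le 7,\\
\I_{i,8}&=\begin{pmatrix} 0 & -R_{e_i} \\ R_{\b{e_i}} & 0\end{pmatrix},\quad 0\le i\le 7,
\end{align*}
and observe that in the first family the off-diagonal blocks vanish, so that \eqref{eq:spin9a} is automatic, while in the second family the diagonal blocks vanish, so that \eqref{eq:spin9b} and \eqref{eq:spin9c} are automatic. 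Thus only two genuine checks remain.

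The check of \eqref{eq:spin9a} on $\I_{i,8}$ is essentially formal: unwinding the definitions, it reduces to the identity $\langle e_b\b{e_i},e_a\rangle=\langle\b{e_i},\b{e_b}e_a\rangle$, which is a direct instance of $\langle xy,z\rangle=\langle y,\b xz\rangle$ from \eqref{eq:ipbar}. For \eqref{eq:spin9b} and \eqref{eq:spin9c} on $\I_{i,j}$ with $0\le i<j\le 7$, I would rewrite each entry as an inner product of octonionic expressions, e.g.\
\[
(R_{\b{e_i}}R_{e_j})^{ac,c}=\langle (e_ce_j)\b{e_i},e_ae_c\rangle,
\qquad
(R_{e_i}R_{\b{e_j}})^{a,0}=\langle \b{e_j}e_i,e_a\rangle,
\]
then move factors across the inner product via the adjoint formulas, and finally appeal to the alternative law and the associator identity \eqref{eq:assocbar}. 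In the case of \eqref{eq:spin9b}, this approach should produce a cancellation: after passing everything to a common form on one side, each occurrence of $\b{e_i}$ in one term is matched with $e_i$ (and $\b{e_j}$ with $e_j$) in the corresponding term, so that both differences collapse symmetrically.

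The main obstacle I expect is \eqref{eq:spin9c} on $\I_{i,j}$ for $0\le i<j\le 7$. Using the adjoint property $\langle w,ye_c\rangle=\langle w\b{e_c},y\rangle$, the sum $\sum_c\langle(e_ce_j)\b{e_i},e_ae_c\rangle$ can be rewritten as $\langle S,e_a\rangle$, where $S=\sum_c((e_ce_j)\b{e_i})\b{e_c}\in\OO$, and the task reduces to establishing the octonionic identity $S=4\b{e_j}e_i$. The natural line of attack is to split the sum according to whether $c$ equals $0$, $i$, $j$, or is distinct from all three, and then to simplify each contribution by repeated use of alternativity; a consistency check in the special case $i=0$ already reveals how the total collapses correctly to $4\b{e_j}$. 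Alternatively, one can recognize the sum as $\tr(T)$ for an operator $T=R_{\b{e_c}}R_{\b{e_i}}R_{e_j}L_{\b{e_c}}\cdots$ built from left and right multiplications, and attempt to evaluate it using \eqref{eq:RwRz}; the presence of a left multiplication sandwiched between right multiplications is what makes this identity more subtle than the preceding ones.
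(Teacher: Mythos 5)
Your plan is essentially the paper's proof: reduce to the basis $\I_{i,j}$, observe that for $j\le 7$ the off-diagonal blocks vanish while for $\I_{i,8}$ the diagonal blocks vanish, so that \eqref{eq:spin9a} only needs to be checked on the $\I_{i,8}$ and \eqref{eq:spin9b}, \eqref{eq:spin9c} only on the $\I_{i,j}$ with $j\le 7$; and your reduction of \eqref{eq:spin9a} to the adjoint identity $\ip{e_b\b{e_i}}{e_a}=\ip{\b{e_i}}{\b{e_b}e_a}$ is exactly the paper's computation. For \eqref{eq:spin9b} the paper makes the cancellation explicit: the difference $(\I_{i,j})_{0,0}^{a,b}-(\I_{i,j})_{0,0}^{\b ba,0}$ collapses to $\ip{e_a}{[e_b,\b{e_j},e_i]}$, and then \eqref{eq:assocbar} converts $[e_b,\b{e_j},e_i]$ into $[e_b,e_j,\b{e_i}]$, which unwinds to the corresponding $(1,1)$-difference; your "matching $\b{e_i}$ with $e_i$" remark is this step.

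The one concrete gap is in your proposed case split for \eqref{eq:spin9c}. Partitioning by $c\in\{0,i,j\}$ versus $c\notin\{0,i,j\}$ is not fine enough: the index $c=\norm{ij}$, for which $e_c$, $e_i$, $e_j$ span a quaternionic (hence associative) subalgebra, behaves differently from the remaining four indices. The paper's computation splits $\{0,\ldots,7\}$ as $\{0,\norm{ij}\}\cup\{i,j\}\cup\bigl(\{0,\ldots,7\}\setminus\{0,i,j,\norm{ij}\}\bigr)$; the first step is to move $(e_ce_j)\b{e_i}$ to $\pm e_c(e_j\b{e_i})$ with a sign depending on whether $c\in\{0,i,j,\norm{ij}\}$, and the second to commute $e_c$ past $e_j\b{e_i}$ with a sign depending on whether $c\in\{0,\norm{ij}\}$. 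The resulting count is $2-2+4=4$. If you treat $c=\norm{ij}$ together with the generic $c$, alternativity applies uniformly but the commutation step does not, and the bookkeeping breaks. Once that extra special index is recognized, your plan — reducing to $\langle S,e_a\rangle$ with $S=\sum_c((e_ce_j)\b{e_i})\b{e_c}$ and chasing $S=4\b{e_j}e_i$ — goes through exactly as in the paper.
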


\begin{proof}
First, as $\Spin(9)\subset\SO(16)$, $\spin(9)\subset\so(16)$ which is equivalent to \eqref{eq:so16}.

Second, if $j<8$, then $(\I_{i,j})_{1,0}^{a,b}=0$ holds for any $a,b$. Thus $A_{1,0}^{a,b}=\sum_{i=0}^7\alpha_{i,8}(\I_{i,8})_{1,0}^{a,b}$ for some $\alpha_{i,8}\in\RR$, and \eqref{eq:spin9a} follows from
\begin{align*}
(\I_{i,8})_{1,0}^{a,b}=(R_{\b{e_i}})^{a,b}=\ip{e_a}{e_b\b{e_i}}=\ip{\b{e_b}e_a}{\b{e_i}}=(R_{\b{e_i}})^{\b ba,0}=(\I_{i,8})_{1,0}^{\b ba,0}.
\end{align*}

Third, $(\I_{i,8})_{0,0}^{a,b}=(\I_{i,8})_{1,1}^{a,b}=0$ for all $a,b,i$. Hence $A_{0,0}^{a,b}=\sum_{0\leq i<j\leq7}\alpha_{i,j}(\I_{i,j})_{0,0}^{a,b}$ for some $\alpha_{i,j}\in\RR$, and similarly for $A_{1,1}^{a,b}$. Then, assuming $j\leq7$, one has
\begin{align*}
(\I_{i,j})_{0,0}^{a,b}-(\I_{i,j})_{0,0}^{\b ba,0}&=(R_{e_i}R_{\b{e_j}})^{a,b}-(R_{e_i}R_{\b{e_j}})^{\b ba,0} \\
&=\ip{e_a}{(e_b\b{e_j})e_i}-\ip{\b{e_b}e_a}{\b{e_j}e_i}\\
&=\ip{e_a}{(e_b\b{e_j})e_i}-\ip{e_a}{e_b(\b{e_j}e_i)}\\
&=\ip{e_a}{[e_b,\b{e_j},e_i]}\\
&=\ip{e_a}{[e_b,e_j,\b{e_i}]}\\
&=\ip{e_a}{(e_b e_j)\b{e_i}}-\ip{e_a}{e_b(e_j\b{e_i})}\\
&=\ip{e_a}{(e_b e_j)\b{e_i}}-\ip{\b{e_b}e_a}{e_j\b{e_i}}\\
&=(R_{\b{e_i}}R_{e_j})^{a,b}-(R_{\b{e_i}}R_{e_j})^{\b ba,0} \\
&=(\I_{i,j})_{1,1}^{a,b}-(\I_{i,j})_{1,1}^{\b ba,0},
\end{align*}
using \eqref{eq:assocbar} for the middle equation, and so \eqref{eq:spin9b} follows.

Finally, for $1\leq j\leq7$ we can write
\begin{align*}
\sum_{c=0}^7(\I_{0,j})_{1,1}^{ac,c}&=\sum_{c=0}^7(R_{e_j})^{ac,c} \\
&=\sum_{c=0}^7\ip{e_ae_c}{e_ce_j}\\
&=\sum_{c\in\{0,j\}}\ip{e_ae_c}{e_je_c}-\sum_{c\notin\{0,j\}}\ip{e_ae_c}{e_je_c}\\
&=(2-6)\ip{e_a}{e_j}\\
&=4\ip{e_a}{\b{e_j}}\\
&=4(R_{\b{e_j}})^{a,0}\\
&=4(\I_{0,j})_{0,0}^{a,0}.
\end{align*}
Similarly, if $1\leq i<j\leq 7$, then $0,i,j,\norm{ij}$ are pairwise distinct and
\begin{align*}
\sum_{c=0}^7(\I_{i,j})_{1,1}^{ac,c}&=\sum_{c=0}^7(R_{\b{e_i}}R_{e_j})^{ac,c} \\
&=\sum_{c=0}^7\ip{e_ae_c}{(e_ce_j)\b{e_i}}\\
&=\sum_{c\in\{0,i,j,\norm{ ij}\}}\ip{e_ae_c}{e_c(e_j\b{e_i})}-\sum_{c\notin\{0,i,j,\norm{ ij}\}}\ip{e_ae_c}{e_c(e_j\b{e_i})}\\
&=\sum_{c\in\{0,\norm{ ij}\}}\ip{e_ae_c}{(e_j\b{e_i})e_c}-\sum_{c\in\{i,j\}}\ip{e_ae_c}{(e_j\b{e_i})e_c}+\sum_{c\notin\{0,i,j,\norm{ ij}\}}\ip{e_ae_c}{(e_j\b{e_i})e_c}\\
&=(2-2+4)\ip{e_a}{e_j\b{e_i}}\\
&=4\ip{e_a}{\b{e_j}e_i}\\
&=4(R_{e_i}R_{\b{e_j}})^{a,0}\\
&=4(\I_{i,j})_{0,0}^{a,0},
\end{align*}
where for the third equality we have used $(e_ce_j)\b{e_i}= - e_c(e_j\b{e_i})$ for $c\notin\{0,i,j,\norm{ ij}\}$. The latter is readily verified using \eqref{eq:RwRz}.
	
Altogether, $\sum_{c=0}^7(\I_{i,j})_{1,1}^{ac,c}=4(\I_{i,j})_{0,0}^{a,0}$ holds whenever $0\leq i<j\leq 7$ and \eqref{eq:spin9c} follows.
\end{proof}

\begin{corollary}
\label{cor:spin9}
Let $A\in \End(\OO^2)$. $A\in\spin(9)$ if and only if the following relations hold:
\begin{alignat}{2}
\label{eq:A10ab}
A_{1,0}^{a,b}&=A_{1,0}^{\b ba,0},&\quad &0\leq a\leq 7,\quad 1\leq b\leq 7;\\
A_{0,1}^{a,b}&=-A_{1,0}^{\b ab,0},&\quad &0\leq a,b\leq 7;\\
A_{k,k}^{a,a}&=0, &\quad&0\leq a\leq 7,\quad 0\leq k\leq 1;\\
A_{1,1}^{a,b}&=-A_{1,1}^{b,a},&\quad &1\leq a<b\leq 7;\\
\label{eq:A11a0}
A_{1,1}^{a,0}&=2A_{0,0}^{a,0}-\frac12\sum_{c\neq 0,a}A_{1,1}^{ac,c},&\quad& 1\leq a\leq7;\\
A_{1,1}^{0,a}&=-2A_{0,0}^{a,0}+\frac12\sum_{c\neq 0,a}A_{1,1}^{ac,c},&\quad& 1\leq a\leq7;\\
\label{eq:A00ab}
A_{0,0}^{a,b}&=-A_{0,0}^{\b ba,0}+A_{1,1}^{a,b}+\frac12\sum_{c\neq 0,\norm{\b ba}}A_{1,1}^{(\b ba)c,c},&\quad&1\leq b<a\leq 7;\\
A_{0,0}^{a,b}&=A_{0,0}^{\b ab,0}-A_{1,1}^{b,a}-\frac12\sum_{c\neq 0,\norm{\b ab}}A_{1,1}^{(\b ab)c,c},&\quad&1\leq a<b\leq 7;\\
\label{eq:A000a}
A_{0,0}^{0,a}&=-A_{0,0}^{a,0},&\quad&1\leq a \leq 7.
\end{alignat}
\end{corollary}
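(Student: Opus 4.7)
The plan is to deduce the corollary from the preceding lemma and a dimension count. Concretely, I would first verify that the relations \eqref{eq:A10ab}--\eqref{eq:A000a} are algebraic consequences of \eqref{eq:so16}--\eqref{eq:spin9c}, giving the necessity direction. I would then count the free parameters of the subspace $V\subseteq\End(\OO^2)$ cut out by \eqref{eq:A10ab}--\eqref{eq:A000a}, establishing $\dim V\le 36$. Combined with the classical identity $\dim\spin(9)=\binom{9}{2}=36$, this yields $V=\spin(9)$ and hence sufficiency.

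For necessity, I would derive the relations one by one. Most of them are immediate: \eqref{eq:A10ab} is \eqref{eq:spin9a}; the formula for $A_{0,1}^{a,b}$ combines the skew-symmetry \eqref{eq:so16} with \eqref{eq:spin9a}; the vanishing of the diagonal entries $A_{k,k}^{a,a}$ and the skew-symmetry of $A_{1,1}$ on $\{1,\dots,7\}$ are specializations of \eqref{eq:so16}; and \eqref{eq:A000a} is again \eqref{eq:so16}. The key step is isolating $A_{1,1}^{a,0}$ in \eqref{eq:spin9c}: since $e_ae_a=-1$, the product $a\cdot a$ equals $-0$ in $\calJ$, so the $c=a$ term of $\sum_{c=0}^7 A_{1,1}^{ac,c}$ equals $A_{1,1}^{-0,a}=-A_{1,1}^{0,a}=A_{1,1}^{a,0}$; combining this with the $c=0$ term yields $2A_{1,1}^{a,0}+\sum_{c\ne 0,a}A_{1,1}^{ac,c}=4A_{0,0}^{a,0}$, which is \eqref{eq:A11a0}. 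The companion relation for $A_{1,1}^{0,a}$ then follows by skew-symmetry. Finally, \eqref{eq:A00ab} is obtained by substituting the expression just derived for $A_{1,1}^{\b ba,0}$ into \eqref{eq:spin9b}.

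For sufficiency, I would observe that the relations of the corollary, read top to bottom, express every entry of $A$ in terms of three disjoint families of free parameters: the eight values $A_{1,0}^{a,0}$ with $a\in\{0,\dots,7\}$, which determine $A_{1,0}$ via \eqref{eq:A10ab} and $A_{0,1}$ via the second relation; the twenty-one values $A_{1,1}^{a,b}$ with $1\le a<b\le 7$, which via skew-symmetry determine $A_{1,1}$ on $\{1,\dots,7\}\times\{1,\dots,7\}$ and enter \eqref{eq:A11a0} and \eqref{eq:A00ab}; and the seven values $A_{0,0}^{a,0}$ with $a\in\{1,\dots,7\}$, which together with the previous family determine the remaining entries of $A_{0,0}$ and $A_{1,1}$. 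This exhibits $V$ as the image of a linear map from $\RR^{8+21+7}=\RR^{36}$, so $\dim V\le 36$; combined with $\spin(9)\subseteq V$ from necessity and $\dim\spin(9)=36$, one concludes $V=\spin(9)$.

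The main place where care is required is the translation between the product on $\calJ$, which may yield negative indices, and the extension convention $A^{-a,b}=-A^{a,b}$. This is delicate in deriving \eqref{eq:A11a0}, where the $c=a$ term in \eqref{eq:spin9c} contributes a \emph{positive} copy of $A_{1,1}^{a,0}$ thanks to a double sign change, and in deriving \eqref{eq:A00ab}, where the sign of $\b ba$ as an element of $\calJ$ depends on the pair $(a,b)$. Once this sign bookkeeping is performed consistently, the argument reduces to the straightforward dimension count above.
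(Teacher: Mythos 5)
Your proposal is correct and follows essentially the same route as the paper: necessity by the same octonionic sign bookkeeping (the $c=a$ and $c=0$ terms of \eqref{eq:spin9c} each contributing $A_{1,1}^{a,0}$, then substituting into \eqref{eq:spin9b}), and sufficiency by a dimension count against $\dim\spin(9)=36$. The only cosmetic difference is that you parametrize the solution space by the $8+21+7=36$ free entries, whereas the paper equivalently counts the $220$ linearly independent functionals cutting it out of the $256$-dimensional space $\End(\OO^2)$.
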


\begin{proof}
For `only if', it only remains to show \eqref{eq:A11a0} and \eqref{eq:A00ab}. Indeed, \eqref{eq:A10ab} is just a special case of \eqref{eq:spin9a} and the other relations then follow easily using skew-symmetry \eqref{eq:so16}. First, if $a\neq0$, then by \eqref{eq:spin9c}
\begin{align*}
4A_{0,0}^{a,0}&=\sum_{c=0}^7A_{1,1}^{ac,c}\\
&=A_{1,1}^{a^2,a}+A_{1,1}^{a,0}+\sum_{c\neq0,a}A_{1,1}^{ac,c}\\
&=-A_{1,1}^{0,a}+A_{1,1}^{a,0}+\sum_{c\neq0,a}A_{1,1}^{ac,c}\\
&=2A_{1,1}^{a,0}+\sum_{c\neq0,a}A_{1,1}^{ac,c}.
\end{align*}
Second, for $1\leq b<a\leq 7$, $\norm{\b ba}\neq0$ and so, by \eqref{eq:spin9b} and \eqref{eq:A11a0},
\begin{align*}
A_{0,0}^{a,b}&=A_{0,0}^{\b ba,0}+A_{1,1}^{a,b}-A_{1,1}^{\b{ ba},0} \\
&=A_{0,0}^{\b ba,0}+A_{1,1}^{a,b}-2A_{0,0}^{\b ba,0}+\frac12\sum_{c\neq0,\b ba}A_{1,1}^{(\b ba)c,c}\\
&=-A_{0,0}^{\b ba,0}+A_{1,1}^{a,b}+\frac12\sum_{c\neq0,\b ba}A_{1,1}^{(\b ba)c,c}.
\end{align*}

As for the opposite direction, it is evident that any $A\in \End(\OO^2)$ satisfying \eqref{eq:A10ab}--\eqref{eq:A000a} lies in the intersection of kernels of $56+64+16+21+7+7+21+21+7=220$
linearly independent linear functionals. From this fact the claim follows at once by comparing dimensions.
\end{proof}

\section{Invariant theory of $\Spin(7)$}

\label{s:InvSpin7}

Later on, $\Spin(9)$-invariant valuations will be studied by means of invariant differential forms on the sphere bundle $S\OO^2$. By transitivity, the invariance descends to that under the stabilizer, acting on the corresponding tangent plane. Therefore, according to \S\ref{ss:Spin97}, the first step in the construction is to understand the spaces
\begin{align*}
P_{l,m}=\left[(\Imag\OO^*)^{\otimes l}\otimes(\OO^*)^{\otimes m}\right]^{\Spin(7)}.
\end{align*}
To this end, we will prove in this section the first fundamental theorem (fft) for the isotropy representation $\Imag\OO\oplus\OO$ of $\Spin(7)$, i.e., we will describe the algebra $\bigoplus_{l,m}P_{l,m}$ in terms of its generators. For the sake of readability, the letter `$u$' will always refer to an element of the vector representation $\Imag\OO$, while `$x$' to an element of the spin representation $\OO$.

Let us recall that the the structure of $P_{l,m}$ is well known in two special cases. First, since the representation of $\Spin(7)$ on $\Imag\OO$ factors through the standard representation of $\SO(7)$, the fft is classical for the subalgebra corresponding to $m=0$ (see, e.g., \cite[Proposition F.15]{FultonHarris:RT}).
\begin{theorem}
\label{thm:FFTvector}
Let $l\geq 0$. $P_{l,0}$ is spanned by products of
\begin{alignat*}{2}
&\ip{u_{j_1}}{u_{j_2}},&\quad&1\leq j_1<j_2\leq l;\\
&\det(u_{j_1},\dots,u_{j_7}),&\quad&1\leq j_1<\cdots<j_7\leq l.
\end{alignat*}
\end{theorem}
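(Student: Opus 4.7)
The plan is to reduce to the classical first fundamental theorem for the orthogonal group, which is the content of the reference cited in the excerpt. Since $\Spin(7)$ acts on $\Imag\OO$ via the vector representation $g\mapsto\chi_g$, the proof amounts to verifying that this action ``sees'' all of $\SO(\Imag\OO)\cong\SO(7)$.

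More concretely, I would first observe that for each $g\in\Spin(7)$ the map $\chi_g$ is an orthogonal transformation of $\Imag\OO$. Indeed, since $g\in\SO(\OO)$ and $g(1)\in S^7$, right multiplication by $g^{-1}(1)$ is an isometry (by \eqref{eq:nda}), so $\chi_g$ is an isometry of $\OO$ fixing $1$, hence restricts to an element of $\SO(\Imag\OO)$. The assignment $g\mapsto\chi_g$ is a continuous Lie group homomorphism $\Spin(7)\to\SO(\Imag\OO)$, and since both groups are $21$-dimensional and connected, with the target compact, the image is open and closed, hence all of $\SO(\Imag\OO)$. Therefore
\begin{equation*}
P_{l,0}=\bigl[(\Imag\OO^*)^{\otimes l}\bigr]^{\Spin(7)}=\bigl[(\Imag\OO^*)^{\otimes l}\bigr]^{\SO(\Imag\OO)}.
\end{equation*}

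Having made this reduction, the theorem is Weyl's first fundamental theorem for the standard representation of $\SO(7)$: every $\SO(n)$-invariant multilinear form on $n$-dimensional Euclidean space is a polynomial in the pairwise inner products $\langle u_i,u_j\rangle$ and the determinants $\det(u_{j_1},\ldots,u_{j_n})$ of $n$-tuples of arguments. This is precisely the statement in the excerpt, and it is exactly Proposition F.15 of Fulton--Harris that the paper cites.

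There is essentially no technical obstacle here, as the theorem is stated as a background fact for the reader's convenience; the only content of the proof is recording the surjectivity of $g\mapsto\chi_g$ and invoking the classical theorem.
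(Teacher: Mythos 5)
Your proposal is correct and takes essentially the same route as the paper, which gives no proof of its own beyond observing that the $\Spin(7)$-action on $\Imag\OO$ factors through the standard representation of $\SO(7)$ and citing the classical first fundamental theorem for the orthogonal group (Fulton--Harris, Proposition F.15). The only small caveat is that equal dimensions of source and target do not by themselves make the image of $g\mapsto\chi_g$ open --- you also need the kernel to be discrete (equivalently $d\chi$ injective), which follows at once from the simplicity of $\spin(7)$ together with the nontriviality of $\chi$, or simply from the standard fact, recalled in the paper's background, that $\chi$ is the two-fold covering $\Spin(7)\to\SO(7)$.
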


Let us normalize $\det(e_1,\dots,e_7)=1$ for the standard basis of $\Imag\OO$. Second, as for the case $l=0$, the fft for the spin representation of $\Spin(7)$ was proven by Schwarz.
\begin{theorem}[Schwarz \cite{Schwarz:InvariantG2andSpin7}]
\label{thm:FFTspin}
Let $m\geq0$. $P_{0,m}$ is spanned by products of
\begin{alignat*}{2}
&\ip{x_{k_1}}{x_{k_2}},&\qquad&1\leq k_1<k_2\leq m;\\
&\Phi(x_{k_1},x_{k_2},x_{k_3},x_{k_4}),&&1\leq k_1<\cdots< k_4\leq m.
\end{alignat*}
\end{theorem}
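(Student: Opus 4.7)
The forward direction is immediate: since $\Spin(7)$ preserves both the inner product and the Cayley calibration $\Phi$ by the characterization $\Spin(7)=\{g\in\GL(\OO)\mid g^*\Phi=\Phi\}$ recalled in \S\ref{ss:Spin97}, every product of the listed multilinear forms is $\Spin(7)$-invariant.

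For the reverse inclusion, my plan is to first use polarization to reduce the statement to the corresponding assertion for \emph{polynomial} invariants: it suffices to show that the ring $\RR[\OO^m]^{\Spin(7)}$ is generated as a graded algebra by the quadratics $\ip{x_i}{x_j}$ and the quartics $\Phi(x_i,x_j,x_k,x_l)$. The natural strategy is then slice-reduction to $G_2$-invariant theory. Since $\Spin(7)$ acts transitively on $S^7\subset\OO$ with isotropy $G_2$ at $1\in\OO$ (as recalled in \S\ref{ss:Spin97}), every $\Spin(7)$-orbit meets the slice $S=\{(t\cdot 1,x_2,\ldots,x_m)\mid t\geq 0\}$. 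Hence a $\Spin(7)$-invariant polynomial $f$ is uniquely determined by $f|_S$, which is $G_2$-invariant in $(x_2,\ldots,x_m)$ and polynomial in the parameter $t$ with $t^2=\ip{x_1}{x_1}$.

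Next, the $G_2$-decomposition $\OO=\RR\oplus\Imag\OO$ into the trivial and standard representations reduces the description of $\RR[\OO^{m-1}]^{G_2}$ to the FFT for $G_2$ on $(\Imag\OO)^{m-1}$, whose generators are the inner products $\ip{\Imag x_i}{\Imag x_j}$ and the $G_2$-invariant associative $3$-form $\omega(\Imag x_i,\Imag x_j,\Imag x_k)=\ip{\Imag x_i}{(\Imag x_j)(\Imag x_k)}$. I would then lift each of these generators back to a $\Spin(7)$-invariant on $\OO^m$ using the restriction identities
\begin{align*}
\ip{x_i}{x_j}\big|_S&=\ip{x_i}{x_j},\\
\ip{x_1}{x_i}\big|_S&=t\,\Real(x_i),\\
\Phi(x_i,x_j,x_k,x_l)\big|_S&=\Phi(x_i,x_j,x_k,x_l),\\
\Phi(x_1,x_i,x_j,x_k)\big|_S&=t\,\omega(\Imag x_i,\Imag x_j,\Imag x_k)+(\text{real-part corrections}),
\end{align*}
which, together with $t^{2\ell}=\ip{x_1}{x_1}^\ell$, realize every $G_2$-invariant component of $f|_S$ as the restriction of a polynomial in $\ip{\cdot}{\cdot}$ and $\Phi$-values on $\OO^m$. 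An induction on the $t$-degree of $f|_S$ then yields the claim.

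The main obstacle is the lifting step: one must verify by direct octonionic computation (using the definition \eqref{eq:CCdef} and the identity $\Real(\b{xy}z)=\Real(x\b{yz})$ for associative triples) that $\Phi(1,u,v,w)$ indeed recovers, up to a nonzero scalar, the $G_2$-associative $3$-form on $\Imag\OO$, and that the correction terms in the last identity are themselves polynomials in the proposed generators. An alternative route closer to Schwarz's original approach would be to compute the Hilbert series of $\RR[\OO^m]^{\Spin(7)}$ via the Molien--Weyl integral over $\Spin(7)$ and match it against the subalgebra generated by the candidates; this trades the geometric bookkeeping above for an equally delicate character computation on $\Spin(7)$.
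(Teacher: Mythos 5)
The paper does not prove this theorem; it cites Schwarz's \emph{Invariant theory of $G_2$ and $\mathrm{Spin}_7$} and uses it as a black box. So there is no in-paper proof to compare your attempt against, and the right question is whether your sketch would close on its own.

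Your strategy---polarize to reduce multilinear invariants to polynomial invariants, then slice-reduce along $\Spin(7)/G_2\cong S^7$---is in fact the general shape of Schwarz's argument, so the route is a sensible one. But as you yourself flag, the lifting step is where the actual content lives, and as written it is a genuine gap rather than a bookkeeping chore. Restriction to your slice $S$ does inject $\RR[\OO^m]^{\Spin(7)}$ into $G_2$-invariants on $\RR\times\OO^{m-1}$ (since $\Spin(7)\cdot S$ is dense), but the image is a \emph{proper} subalgebra: there is a twisted parity constraint in $t$ coming from an element of $\Spin(7)$ sending $1\mapsto -1$ together with its induced $G_2$-conjugation on $\Imag\OO$, and further compatibility conditions governing behavior as $t\to 0$. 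To show that the restrictions of $\ip{x_i}{x_j}$ and $\Phi(x_i,x_j,x_k,x_l)$ generate \emph{exactly} this image, one needs not just the first fundamental theorem for $G_2$ on copies of $\Imag\OO$ but also its second fundamental theorem (the relations among the $\ip{u_i}{u_j}$ and the associative $3$-form values), because one must rewrite arbitrary invariant elements of the image modulo those relations into lifted form. Schwarz establishes the $G_2$ SFT in the same paper precisely because the $\Spin(7)$ FFT does not follow from the $G_2$ FFT alone. Your restriction identities and the remark that $\Phi(1,u,v,w)$ recovers the $G_2$ $3$-form are correct, but the ``real-part corrections'' and the induction on $t$-degree are exactly the nontrivial part, and they are not carried out. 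The alternative Molien--Weyl route would also require both a nontrivial integral evaluation and an independent computation of the Hilbert series of the candidate subalgebra, so it trades, rather than removes, the difficulty.
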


Here we will interpolate between these two results, following the general strategy towards invariant theory of a general representation outlined in \cite[\S6.8]{Procesi:LieGroups}. The following simple but important observation is in the background of our construction.

\begin{proposition}
\label{pro:Spin7equivariant}
The linear maps
\begin{align*}
j\colon \Imag\OO\rightarrow\End(\OO)\cong \OO\otimes \OO, \quad u\mapsto L_u
\end{align*}
and
\begin{align*}
\pi\colon \OO\otimes\OO\rightarrow\Imag\OO, \quad   x\otimes y\mapsto \frac 12 (x\b y-y\b x)
\end{align*}
are $\Spin(7)$-equivariant and $\pi\circ j= id$.
\end{proposition}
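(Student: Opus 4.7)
The proof decomposes into three separate verifications: equivariance of $j$, equivariance of $\pi$, and the composition identity $\pi\circ j=\mathrm{id}$. For the equivariance of $j$, I would interpret the $\Spin(7)$-action on $\End(\OO)\cong\OO\otimes\OO$ as conjugation. The claim then reduces to $g\circ L_u\circ g^{-1}=L_{\chi_g u}$, which follows immediately by applying \eqref{eq:Spin7a} with $x=u$ and $y=g^{-1}(z)$:
\[ g\bigl(u\cdot g^{-1}(z)\bigr)=\chi_g(u)\cdot g\bigl(g^{-1}(z)\bigr)=\chi_g(u)\,z. \]
That $\chi_g u$ remains imaginary is immediate from $\chi_g(1)=1$ together with orthogonality.

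The central computation is the equivariance of $\pi$. I would exploit the fact that $\Spin(7)$ is generated, via \eqref{eq:Spin7}, by the operators $L_u$ with $u\in\Imag\OO$, $|u|=1$, so by linearity of $\pi$ it suffices to check equivariance on such generators. From \eqref{eq:chig}, together with $g^{-1}(1)=-u$ and the alternative law, one finds $\chi_{L_u}(z)=-uzu$, the parenthesization being unambiguous. The heart of the verification is then the Moufang identity $(ax)(ya)=a(xy)a$ with $a=u$, which, combined with $\bar u=-u$, yields
\[ (ux)\,\overline{uy}=(ux)(\bar y\bar u)=-(ux)(\bar y u)=-u(x\bar y)u. \]
Antisymmetrizing in $x$ and $y$ delivers $\pi\bigl((L_ux)\otimes(L_uy)\bigr)=-u\,\pi(x\otimes y)\,u=\chi_{L_u}\pi(x\otimes y)$, as required.

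Once equivariance of both $j$ and $\pi$ is established, the composition $\pi\circ j$ is an equivariant endomorphism of the irreducible $\Spin(7)$-representation $\Imag\OO$, hence a scalar multiple of the identity by Schur's lemma. To pin down the scalar, I would use the identification $\OO\otimes\OO\cong\End(\OO)$ sending a simple tensor $x\otimes y$ to $L_x\circ R_y$: with this convention $L_u=L_u\circ R_1$ corresponds to $u\otimes 1$, and so $\pi(j(u))=\tfrac12(u\cdot 1-1\cdot\bar u)=u$ since $u\in\Imag\OO$. I expect the Moufang-identity step in the second paragraph to be the main technical hurdle: the correct parenthesization of the octonionic products, and the explicit formula $\chi_{L_u}(z)=-uzu$, both need to be in place before the equivariance of $\pi$ drops out cleanly. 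Everything else is essentially formal bookkeeping.
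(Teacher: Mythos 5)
Your first two verifications are correct and are precisely what the paper's one-line proof ("an easy consequence of the Moufang identities") has in mind: $gL_ug^{-1}=L_{\chi_g(u)}$ follows from \eqref{eq:Spin7a}, and for $\pi$ the reduction to the generators $L_u$ of \eqref{eq:Spin7} (valid because $\chi_{gh}=\chi_g\chi_h$, not really "by linearity") together with $L_u^{-1}(1)=-u$, $\chi_{L_u}(z)=-uzu$ and the Moufang identity $(ux)(\overline{y}\,u)=u(x\overline{y})u$ gives exactly the required identity $\pi(L_ux\otimes L_uy)=\chi_{L_u}\pi(x\otimes y)$.

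The gap is in the step $\pi\circ j=\id$. Since $\pi$ is defined on $\OO\otimes\OO$, evaluating it on $j(u)=L_u\in\End(\OO)$ requires fixing the isomorphism $\End(\OO)\cong\OO\otimes\OO$, and both the equivariance of $\pi\circ j$ and the resulting scalar depend on that choice. You choose $x\otimes y\mapsto L_x\circ R_y$, which you neither prove to be bijective (injectivity is needed even to say that $L_u$ "corresponds to" $u\otimes 1$), nor is it compatible with the actions you just used: conjugating by $g\in\Spin(7)$ does not send $L_xR_y$ to $L_{g(x)}R_{g(y)}$ (take $g=L_u$ and $x=y=1$: conjugation fixes $\id$, while $L_uR_u(z)=uzu\neq z$), so under this identification $\pi\circ j$ is not known to be equivariant and Schur's lemma cannot be invoked; the value $c=1$ is therefore not established. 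The identification that does intertwine conjugation on $\End(\OO)$ with the diagonal spin action on $\OO\otimes\OO$ — and the one the paper implicitly uses, as formula \eqref{eq:mapFlm} in Lemma \ref{lem:mapFlm} shows — is $A\mapsto\sum_i A(f_i)\otimes f_i$ for an orthonormal basis $f_0,\dots,f_7$ of $\OO$. With this identification no Schur argument is needed: alternativity gives directly $\pi(j(u))=\frac12\sum_i\bigl[(uf_i)\overline{f_i}-f_i(\overline{f_i}\,\overline{u})\bigr]=\frac12(8u+8u)=8u$, i.e.\ $\pi\circ j$ is a nonzero multiple of the identity — which is all that the surjectivity argument of Lemma \ref{lem:mapFlm} actually requires, but it shows that the scalar is a normalization issue tied to the unspecified isomorphism, and that your route to the value $1$ (Schur plus the $L_xR_y$ identification) does not go through as written. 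The clean fix is to fix the equivariant identification once and for all and compute $\pi(j(u))$ directly, adjusting the constant in the statement (or in $\pi$) accordingly.
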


\begin{proof}
An easy consequence of the Moufang identities.
\end{proof}

\begin{lemma}
\label{lem:mapFlm}
The map $\calF_{l,m}\maps{P_{l-1,m+2}}{P_{l,m}}$
given by
\begin{align}
\label{eq:mapFlm}
(\calF_{l,m} p)(u_1,\dots,u_l ,x_1,\dots,x_m)=\sum_{i=0}^7p(u_1,\dots,u_{l-1},u_lf_i,f_i,x_1,\dots,x_m),
\end{align}
where $f_0,\dots,f_7$ is some orthonormal basis of $\OO$, is well defined, linear, and onto.
\end{lemma}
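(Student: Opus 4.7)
The plan is to recognize $\calF_{l,m}$ as contraction against the $\Spin(7)$-equivariant element $j(u_l)\in\OO\otimes\OO$ and to produce a right inverse using $\pi$, both supplied by Proposition~\ref{pro:Spin7equivariant}. Under the standard isomorphism $\End(\OO)\cong\OO\otimes\OO$ (using the inner product to identify $\OO^*\cong\OO$), one has $L_u\leftrightarrow \sum_{i=0}^7 (uf_i)\otimes f_i$, which rewrites \eqref{eq:mapFlm} basis-independently as
\begin{align*}
(\calF_{l,m}p)(u_1,\dots,u_l,x_1,\dots,x_m) = \bigl\langle p(u_1,\dots,u_{l-1},\cdot,\cdot,x_1,\dots,x_m),\ j(u_l)\bigr\rangle,
\end{align*}
where the pairing contracts $j(u_l)$ with the first two $\OO^*$-slots of $p$. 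This yields well-definedness and linearity at once. For $\Spin(7)$-invariance, I would apply $g\in\Spin(7)$ to all slots of $\calF_{l,m}p$; using \eqref{eq:Spin7a} in the form $\chi_g(u_l)\cdot f_i = g\bigl(u_l\cdot g^{-1}(f_i)\bigr)$, invoking the invariance of $p$, and reindexing the sum by the orthonormal basis $f_i' = g^{-1}(f_i)$, basis-independence returns the value $(\calF_{l,m}p)(u_1,\dots,u_l,x_1,\dots,x_m)$. Hence $\calF_{l,m}p\in P_{l,m}$.

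For surjectivity, given $q\in P_{l,m}$ I would define
\begin{align*}
p(u_1,\dots,u_{l-1},y_1,y_2,x_1,\dots,x_m) := q\bigl(u_1,\dots,u_{l-1},\pi(y_1\otimes y_2),x_1,\dots,x_m\bigr),
\end{align*}
which lies in $P_{l-1,m+2}$ by the $\Spin(7)$-equivariance of $\pi$. Since $\pi\circ j$ is a nonzero scalar multiple of the identity by Proposition~\ref{pro:Spin7equivariant}---in coordinates this reduces to
\begin{align*}
\pi(uf_i\otimes f_i) = \tfrac12\bigl[(uf_i)\bar f_i - f_i\overline{uf_i}\bigr] = \tfrac12\bigl[u(f_i\bar f_i)-(f_i\bar f_i)\bar u\bigr] = u
\end{align*}
by Artin's theorem applied to the subalgebra generated by $u$ and $f_i$---it follows that $\calF_{l,m}p$ is a nonzero scalar multiple of $q$, so rescaling $p$ accordingly shows that $\calF_{l,m}$ is onto.

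I do not anticipate any deep obstacle; the lemma is a formal consequence of Proposition~\ref{pro:Spin7equivariant}. The only genuinely octonionic input is Artin's theorem, which legitimizes the associative manipulations $(uf_i)\bar f_i = u(f_i\bar f_i)$ and $f_i(\bar f_i\bar u) = (f_i\bar f_i)\bar u$ in the computation of $\pi\circ j$. The minor care point is keeping the order of arguments straight when contracting $\pi$ into the correct pair of $\OO$-slots of $p$.
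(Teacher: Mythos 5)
Your proposal is correct and follows essentially the same route as the paper: well-definedness via the $\Spin(7)$-equivariant contraction against $j(u_l)$, and surjectivity via the $\Spin(7)$-equivariant right inverse built from $\pi$, both supplied by Proposition~\ref{pro:Spin7equivariant}. The only cosmetic slip is in the displayed coordinate check of $\pi\circ j$, where the sum over $i$ is dropped (each summand gives $|f_i|^2 u=u$, so the total is $8u$, not $u$), but you correctly only use that $\pi\circ j$ is a nonzero multiple of the identity, which is all surjectivity requires.
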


\begin{proof}
 By Proposition~\ref{pro:Spin7equivariant}, the linear map 	$(\Imag\OO^*)^{\otimes (l-1)}\otimes(\OO^*)^{\otimes (m+2)}\to (\Imag\OO^*)^{\otimes l}\otimes(\OO^*)^{\otimes m}$ defined by
$$ \alpha_1\otimes \cdots \otimes \alpha_{l-1} \otimes \omega_1 \otimes \cdots \otimes \omega_{m+2}\mapsto 
\alpha_1\otimes \cdots \otimes \alpha_{l-1} \otimes j^*(\omega_1 \otimes \omega_2)\otimes \omega_3 \cdots \otimes \omega_{m+2}$$
is 	$\Spin(7)$-equivariant  and hence restricts to a well-defined linear map  $\calF_{l,m}\maps{P_{l-1,m+2}}{P_{l,m}}$.
To prove surjectivity, note that the linear map 	$(\Imag\OO^*)^{\otimes l}\otimes(\OO^*)^{\otimes m}\to (\Imag\OO^*)^{\otimes (l-1)}\otimes(\OO^*)^{\otimes (m+2)}$ defined by
$$ \alpha_1\otimes \cdots \otimes \alpha_{l} \otimes \omega_1 \otimes \cdots \otimes \omega_{m}\mapsto 
\alpha_1\otimes \cdots \otimes \alpha_{l-1} \otimes \pi^*\alpha_l\otimes \omega_1 \cdots \otimes \omega_{m}$$
restricts to a map $P_{l,m}\rightarrow P_{l-1,m+2}$ which is right inverse to $\calF_{l,m}$.
\end{proof}

\begin{corollary}
\label{cor:calF}
The map $\calG_{l,m}=\calF_{l,m}\circ\calF_{l-1,m+2}\circ\cdots\circ\calF_{1,m+2l-2}\maps{P_{0,m+2l}}{P_{l,m}}$ is onto.
\end{corollary}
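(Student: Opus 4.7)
The plan is to observe that Corollary \ref{cor:calF} is essentially an immediate consequence of Lemma \ref{lem:mapFlm}. First, I would verify that the indices match up so that the composition is actually well defined: the map $\calF_{i, m+2(l-i)} \colon P_{i-1, m+2(l-i)+2} \to P_{i, m+2(l-i)}$ sends $P_{i-1, m+2(l-i+1)}$ into $P_{i, m+2(l-i)}$, so the target of $\calF_{i-1, m+2(l-i+1)}$ is exactly the source of $\calF_{i, m+2(l-i)}$, for each $1 \le i \le l$. Hence the composition $\calG_{l,m}$ indeed starts at $P_{0, m+2l}$ and ends at $P_{l,m}$.

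Since each $\calF_{i, m+2(l-i)}$ is linear and surjective by Lemma \ref{lem:mapFlm}, and the composition of finitely many surjective linear maps between vector spaces is surjective, the conclusion follows at once. There is no real obstacle here; the content of the statement is concentrated in Lemma \ref{lem:mapFlm} (and, upstream, in the existence of the $\Spin(7)$-equivariant splitting $\pi \circ j = \id$ from Proposition \ref{pro:Spin7equivariant}), while the corollary merely packages iterated application of that lemma into a single statement reducing the description of arbitrary $P_{l,m}$ to the purely spin case $P_{0, m+2l}$, to which Schwarz's Theorem \ref{thm:FFTspin} then applies.
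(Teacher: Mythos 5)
Your proof is correct and matches the paper's (implicit) argument: Corollary \ref{cor:calF} is indeed just an iterated application of Lemma \ref{lem:mapFlm}, whose sole content is that a composition of the surjective linear maps $\calF_{i,m+2(l-i)}$ is itself surjective. Your explicit verification that the source and target indices chain up correctly is a reasonable bit of bookkeeping that the paper leaves to the reader.
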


Explicitly, for the (standard) orthonormal basis $e_0,\dots,e_7$ of $\OO$ one has
\begin{align}
\label{eq:mapGlm}
(\calG_{l,m} p)(u_1,\dots,u_l,x_1,\dots,x_m)=\sum_{i_1,\dots,i_l=0}^7p(u_1e_{i_1},e_{i_1},\dots,u_le_{i_l},e_{i_l},x_1,\dots,x_m).
\end{align}
Let us show two more simple auxiliary statements before we will finally proceed to the proof of the general first fundamental theorem.

\begin{lemma}
\label{pro:IPcontraction}
For any $u_1,\dots,u_r\in\Imag\OO$ and $x\in\OO$ one has
\begin{align*}
\sum_{i_1,\dots,i_r=0}^7 u_1e_{i_1}\ip{e_{i_1}}{u_2e_{i_2}}\cdots\ip{e_{i_{r-1}}}{u_re_{i_r}}\ip{e_{i_r}}{x}=L_{u_1}\cdots L_{u_r}(x).
\end{align*}
\end{lemma}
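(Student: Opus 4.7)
The plan is to prove the identity by a straightforward induction on $r$, eliminating the summation variables one at a time starting from the innermost index $i_r$. The only substantive input needed is the resolution of the identity on $\OO$: for the orthonormal basis $e_0,\dots,e_7$ and any $y\in\OO$ one has $\sum_{i=0}^{7}\ip{e_i}{y}e_i=y$. For the base case $r=1$, the sum equals $\sum_{i=0}^{7}u_1 e_i\ip{e_i}{x}$, and pulling $u_1$ out through left multiplication yields $u_1 x=L_{u_1}(x)$ at once.

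For the inductive step, assuming the claim for $r-1$, I would evaluate the innermost sum over $i_r$ first. By linearity of the inner product in its second argument and of left multiplication,
$$\sum_{i_r=0}^{7}\ip{e_{i_{r-1}}}{u_r e_{i_r}}\ip{e_{i_r}}{x}=\left\langle e_{i_{r-1}},\,u_r\sum_{i_r=0}^{7}\ip{e_{i_r}}{x}\,e_{i_r}\right\rangle=\ip{e_{i_{r-1}}}{u_r x}=\ip{e_{i_{r-1}}}{L_{u_r}(x)}.$$
Substituting back reduces the original expression to the same shape, but with only $r-1$ summation indices and with $x$ replaced by $L_{u_r}(x)\in\OO$. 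The induction hypothesis then gives $L_{u_1}\cdots L_{u_{r-1}}(L_{u_r}(x))=L_{u_1}\cdots L_{u_r}(x)$, completing the argument.

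Since the proof is essentially bookkeeping around the single identity $\sum_i\ip{e_i}{\cdot}e_i=\id$, there is no real obstacle. The one point to be mindful of is to peel off the rightmost index first: the variable $e_{i_r}$ appears as the second argument of an inner product and as the right factor in the non-associative product $u_r e_{i_r}$, and both operations are linear in $e_{i_r}$, so the telescoping collapses cleanly. Attempting to contract the leftmost index first would place the summed basis vector inside an \emph{outer} factor of a non-associative product, obstructing the reduction; peeling from the right avoids any encounter with the associator.
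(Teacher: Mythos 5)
Your proof is correct and is essentially the paper's argument: the paper disposes of the lemma in one line by "iterating the orthogonal decomposition $\sum_i\ip{e_i}{a}e_i=a$ of $a\in\OO$", which is exactly your right-to-left telescoping written out as an induction on $r$. One small remark: your closing caution is unnecessary, since real scalars commute with everything and produce no associator, so contracting the leftmost index first works just as well and yields the nested product $u_1(u_2(\cdots(u_r x)\cdots))=L_{u_1}\cdots L_{u_r}(x)$ directly.
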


\begin{proof}
This follows at once by iterating the orthogonal decomposition $\sum_i\ip{e_i}ae_i$ of $a\in\OO$.
\end{proof}

\begin{lemma}
For any $w,x,y,z\in\OO$ one has
\begin{align}
\label{eq:PhiIP}
\Phi(w,x,y,z)&=\ip{w}{(x\b{y})z}-\ip{w}{x}\ip{y}{z}+\ip{w}{y}\ip{x}{z}-\ip{w}{z}\ip{x}{y}.
\end{align}
\end{lemma}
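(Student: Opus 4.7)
My plan is to derive \eqref{eq:PhiIP} by symmetrizing the two summands in the definition \eqref{eq:CCdef} and exploiting the composition laws \eqref{eq:RwRz} together with the polarization of the norm multiplicativity \eqref{eq:nda}. I would start by unpacking
\[
2\Phi(w,x,y,z)=\ip{w}{(x\b y)z}-\ip{w}{(z\b y)x}
=2\ip{w}{(x\b y)z}-\bigl[\ip{w}{(x\b y)z}+\ip{w}{(z\b y)x}\bigr],
\]
so that the task reduces to evaluating the $x\leftrightarrow z$ symmetric combination in the bracket.

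The key identity I would establish is the polarization
\begin{equation}\label{eq:pol}
\ip{w}{(x\b y)z}+\ip{w}{(x\b z)y}=2\ip{w}{x}\ip{y}{z}.\tag{$*$}
\end{equation}
Polarizing $|ab|^{2}=|a|^{2}|b|^{2}$ in the second slot yields $\ip{ab_1}{cb_2}+\ip{ab_2}{cb_1}=2\ip{a}{c}\ip{b_1}{b_2}$, and I would apply this with $a=x$, $b_1=\b y$, $c=w$, $b_2=\b z$, using $\ip{\b y}{\b z}=\ip{y}{z}$ from \eqref{eq:ipbar}. Then $\ip{x\b y}{w\b z}=\ip{w}{(x\b y)z}$ follows from $R_{\b z}^{*}=R_z$ in \eqref{eq:RwRz}, giving \eqref{eq:pol}. (Alternatively, one can derive \eqref{eq:pol} directly by applying $R_{\b y}R_z+R_{\b z}R_y=2\ip{y}{z}\id$ to an appropriate argument and then taking an inner product.)

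Swapping $x$ and $z$ in \eqref{eq:pol} and adding gives
\[
\ip{w}{(x\b y)z}+\ip{w}{(z\b y)x}+\ip{w}{[(x\b z)+(z\b x)]y}=2\ip{w}{x}\ip{y}{z}+2\ip{w}{z}\ip{x}{y}.
\]
Finally, $x\b z+z\b x=2\Real(x\b z)=2\ip{x}{z}$ by \eqref{eq:ipbar}, so $[(x\b z)+(z\b x)]y=2\ip{x}{z}\,y$, and substituting back into the displayed expansion of $2\Phi(w,x,y,z)$ and dividing by $2$ yields exactly \eqref{eq:PhiIP}. No step is particularly difficult; the only place one has to be careful is in correctly identifying the adjoints of the right-multiplication operators, since the non-associativity of $\OO$ makes it easy to misplace a bar or reverse a product.
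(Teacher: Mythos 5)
Your proof is correct. The polarized composition law $\ip{ab_1}{cb_2}+\ip{ab_2}{cb_1}=2\ip{a}{c}\ip{b_1}{b_2}$ does hold (strictly, it comes from polarizing \eqref{eq:nda} in \emph{both} factors, not just the second slot, but your alternative derivation through $R_{\b y}R_z+R_{\b z}R_y=2\ip{y}{z}\id$ and $R_w^*=R_{\b w}$ settles this), the identification $\ip{x\b y}{w\b z}=\ip{w}{(x\b y)z}$ via $R_{\b z}^*=R_z$ is right, and the $x\leftrightarrow z$ symmetrization combined with $x\b z+z\b x=2\ip{x}{z}$ reduces $2\Phi(w,x,y,z)$ to exactly twice the right-hand side of \eqref{eq:PhiIP}. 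Your route is a mild variant of the paper's: there one does not pair with $w$ until the very end; instead $-(z\b y)x$ is rewritten in three successive steps using \eqref{eq:RwRz} and $a\b b+b\b a=2\ip{a}{b}$, which yields the octonion-valued identity $x\times y\times z=(x\b y)z-\ip{y}{z}x+\ip{x}{z}y-\ip{x}{y}z$, and \eqref{eq:PhiIP} then follows by taking the inner product with $w$. Both arguments rest on the same two facts from \eqref{eq:RwRz} and \eqref{eq:ipbar}; the paper's version has the small advantage of recording the explicit formula for the triple cross product itself, whereas your scalar-level symmetrization makes it somewhat more transparent where each of the three inner-product correction terms comes from.
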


\begin{proof}
Using \eqref{eq:RwRz} and \eqref{eq:ipbar}, one gets
\begin{align*}
2 x\times y\times z-(x\b{y})z&=-(z\b{y})x\\
&=(y\b{z})x-2\ip{y}{z}x\\
&=-(y\b{x})z-2\ip{y}{z}x+2\ip{x}{z}y\\
&=(x\b{y})z-2\ip{y}{z}x+2\ip{x}{z}y-2\ip{x}{y}z,
\end{align*}
hence $x\times y\times z=(x\b{y})z-\ip{y}{z}x+\ip{x}{z}y-\ip{x}{y}z$, and \eqref{eq:PhiIP} follows.
\end{proof}

\begin{theorem}
\label{thm:Plm}
$P_{l,m}$ is spanned by products of the following functions:
\begin{alignat}{2}
\label{eq:invb}&\ip{u_{j_1}}{u_{j_2}},&\quad&1\leq j_1<j_2\leq l;\\
&\det(u_{j_1},\dots,u_{j_7}),&\quad&1\leq j_1<\cdots<j_7\leq l;\\
&\ip{x_{k_1}}{L_{u_{j_1}}\cdots L_{u_{j_r}}(x_{k_2})},&\qquad&0\leq r\leq 7,\quad1\leq j_1<\cdots<j_r\leq l,\quad 1\leq k_1<k_2\leq m;\\
&\Phi(x_{k_1},x_{k_2},x_{k_3},x_{k_4}),&&1\leq k_1<\cdots< k_4\leq m;\\
\label{eq:inve}&\Phi(u_jx_{k_1},x_{k_2},x_{k_3},x_{k_4}),&&1\leq k_1<\cdots< k_4\leq m,\quad 1\leq j\leq l;
\end{alignat}
$u_i\in\Imag\OO$ and $x_i\in\OO$, such that each of the variables $u_1,\dots,u_l,x_1,\dots,x_m$ occurs exactly once.
\end{theorem}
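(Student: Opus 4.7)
By Corollary~\ref{cor:calF}, the map $\calG_{l,m}\colon P_{0,m+2l}\to P_{l,m}$ is surjective, so it suffices to show that $\calG_{l,m}(p)$ lies in the span of \eqref{eq:invb}--\eqref{eq:inve} whenever $p$ is a product of the Schwarz generators---inner products $\langle y_r,y_s\rangle$ and Cayley calibrations $\Phi(y_{r_1},y_{r_2},y_{r_3},y_{r_4})$ (Theorem~\ref{thm:FFTspin}). As a short preliminary step I would check that each listed function is itself $\Spin(7)$-invariant: the pure-vector terms follow from Theorem~\ref{thm:FFTvector}, $\Phi(x,x,x,x)$ is invariant by definition, and the remaining two types invoke the identity $g\circ L_u=L_{\chi_g(u)}\circ g$, an immediate consequence of~\eqref{eq:Spin7a}, combined with the $\Spin(7)$-invariance of $\langle\cdot,\cdot\rangle$ and $\Phi$.

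\textbf{Main computation.} The first move is to rewrite every $\Phi$-factor in $p$ via~\eqref{eq:PhiIP}, so that each summand of $p$ becomes a product of atoms that are either ordinary inner products $\langle y_r,y_s\rangle$ or ``triple terms'' $\langle y_{r_1},(y_{r_2}\bar y_{r_3})y_{r_4}\rangle$. Applying $\calG_{l,m}$ as in~\eqref{eq:mapGlm} then amounts to substituting $(u_je_{i_j},e_{i_j})$ at positions $(2j-1,2j)$ and summing over $i_1,\dots,i_l$. I would perform these summations one at a time: the two atoms in which a given index $i_j$ appears get coupled and can be simplified using the orthonormality contraction $\sum_i\langle a,e_i\rangle\langle e_i,b\rangle=\langle a,b\rangle$; iterating this move along a chain of linked atoms is exactly the content of Lemma~\ref{pro:IPcontraction} and produces an operator product $L_{u_{j_1}}\cdots L_{u_{j_r}}$. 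A chain terminating in two $x$-variables yields the listed invariant $\langle x_{k_1},L_{u_{j_1}}\cdots L_{u_{j_r}}x_{k_2}\rangle$, while a chain that closes on itself yields a trace $\mathrm{tr}(L_{u_{j_1}}\cdots L_{u_{j_r}})$, which the Clifford relations~\eqref{eq:RwRz} reduce to a polynomial in $\langle u_i,u_j\rangle$'s and, when the chain has length $7$, to a multiple of $\det(u_{j_1},\dots,u_{j_7})$ via the Clifford-algebra volume element.

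\textbf{Triple terms and the main obstacle.} The triple terms $\langle y_{r_1},(y_{r_2}\bar y_{r_3})y_{r_4}\rangle$ left over from the expanded $\Phi$'s require a case analysis according to how their four slots interact with the pairs $(u_je_{i_j},e_{i_j})$. If all four slots are $x$-variables, the term reassembles through~\eqref{eq:PhiIP} into $\Phi(x_{k_1},x_{k_2},x_{k_3},x_{k_4})$ modulo inner-product corrections; if exactly one slot holds $u_je_{i_j}$ while the companion $e_{i_j}$ occurs inside some other atom $\langle e_{i_j},a\rangle$, the summation $\sum_{i_j}$ substitutes $a$ for $e_{i_j}$ and yields $\Phi(u_ja,\dots)$, an invariant of type~\eqref{eq:inve}; the remaining configurations---both members of a pair inside the same $\Phi$, or two triple terms sharing a contracted index---are handled in the same spirit using the Moufang identities together with~\eqref{eq:RwRz}. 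The main obstacle is precisely this combinatorial bookkeeping of how shared indices couple the various atoms, compounded by the non-associativity of $\OO$ that forces a meticulous use of the Moufang identities whenever triple terms interact with each other or with the $L_{u_j}$-chains. Once all cases are dispatched, every outcome of $\calG_{l,m}$ is seen to lie in the span of~\eqref{eq:invb}--\eqref{eq:inve}, finishing the proof.
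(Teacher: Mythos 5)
Your overall route coincides with the paper's: surjectivity of $\calG_{l,m}$ (Corollary \ref{cor:calF}), contraction of the Schwarz generators, and a chain analysis using Lemma \ref{pro:IPcontraction}; the treatment of chains built purely from inner products is fine. The gaps occur exactly at the two nontrivial reductions. First, when a contracted chain of left multiplications feeds into a slot of a calibration you obtain $\Phi\bigl(L_{u_{j_1}}\cdots L_{u_{j_r}}(x_{k_1}),x_{k_2},x_{k_3},x_{k_4}\bigr)$ with $r$ possibly $\geq 2$; contrary to what you assert, this is \emph{not} an invariant of type \eqref{eq:inve}, which admits only a single $u$. One needs an extra reduction: from \eqref{eq:PhiIP} and \eqref{eq:RwRz} one derives the exchange relation $\Phi(x_{k_1},u\,x_{k_2},x_{k_3},x_{k_4})\sim\Phi(x_{k_1},x_{k_2},u\,x_{k_3},x_{k_4})$ modulo the listed invariants, and symmetrizing in $u_{j_1},u_{j_2}$ then produces $L_{u_{j_1}}L_{u_{j_2}}+L_{u_{j_2}}L_{u_{j_1}}=-2\ip{u_{j_1}}{u_{j_2}}\id$, lowering $r$ by two until $r\leq 1$. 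Nothing in your proposal performs this step. Relatedly, the bound $0\leq r\leq 7$ in the third family of generators requires the antisymmetrization identity $\sum_{\pi\in\calS_8}\sgn(\pi)\,L_{u_{j_{\pi(1)}}}\cdots L_{u_{j_{\pi(8)}}}=0$ (valid since $\dim\Imag\OO=7$) combined with \eqref{eq:RwRz}; your trace discussion only covers closed chains and does not reduce open chains of length $\geq 8$.

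Second, the case of two calibrations (two of your triple terms) linked by a contracted index cannot be dispatched by ``Moufang identities together with \eqref{eq:RwRz}'' alone. The actual mechanism is $\sum_i\Phi(u e_i,x_{k_1},x_{k_2},x_{k_3})\Phi(e_i,x_{k_4},x_{k_5},x_{k_6})=-\Phi\bigl(u(x_{k_1}\times x_{k_2}\times x_{k_3}),x_{k_4},x_{k_5},x_{k_6}\bigr)$, and after moving $u$ to another slot via the exchange relation one still has to know that $\Phi(x_{k_1}\times x_{k_2}\times x_{k_3},x_{k_4},x_{k_5},x_{k_6})$ is expressible in the listed invariants; this is an element of $P_{0,6}$ and the paper obtains its reducibility from a second application of Schwarz's Theorem \ref{thm:FFTspin}. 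This external input is essential, and your ``handled in the same spirit'' step has no argument in its place. (Similarly, for a chain with exactly one $\Phi$ and only two $x$'s one needs the observation $\sum_i\Phi(e_i,e_i,\cdot,\cdot)=0$, which your case list omits.) A small remark on a point where you genuinely differ: you reduce closed chains to traces of products of $L$'s and appeal to the Clifford volume element, whereas the paper simply notes that an $x$-free chain lies in $P_{r,0}$ and invokes Theorem \ref{thm:FFTvector}; your route can be made to work but needs the same antisymmetrization argument, while the paper's is immediate.
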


\begin{proof}
Let $p\in P_{0,m+2l}$ be a product of
\begin{alignat*}{2}
&\ip{x_{k_1}}{x_{k_2}},&\quad&1\leq k_1<k_2\leq m+2l,\\
&\Phi(x_{k_1},x_{k_2},x_{k_3},x_{k_4}),&&1\leq k_1<\cdots<k_4\leq m+2l.
\end{alignat*}
Note that since $p$ is multilinear, each $x_i$ appears exactly once in such a product. According to Theorem \ref{thm:FFTspin} and Corollary \ref{cor:calF}, $\calG_{l,m} p\in P_{l,m}$ and, furthermore, the latter space is spanned by elements of this type. By \eqref{eq:mapGlm}, the map $\calG_{l,m}$ merges the factors of $p$ into (possibly branched or closed) `chains' by plugging $u_je_{i_j}$ in some factor, $e_{i_j}$ in another, and summing over $i_j$. Since for any non-zero $u\in\Imag\OO$, $\frac{u}{|u|}e_1,\dots, \frac{u}{|u|}e_7$ is another orthonormal basis of $\OO$ and since $u^2=-|u|^2$, swapping  $u_je_{i_j}$ and $e_{i_j}$ only affects the sign of $p$. Observe also that the number of $x$'s in any chain is necessarily even. In the rest of the proof, we will investigate systematically which chains in general occur in $\calG_{l,m} p$,  distinguishing four cases. All summations are taken from 0 to 7 if not stated otherwise. 
\begin{enuma}

\item \emph{Chains containing no $x$.} Such products belong to $P_{r,0}$ for some $r\geq1$. According to Theorem \ref{thm:FFTvector}, these must be polynomials in inner products and determinants on $\Imag\OO$.

\item \emph{Chains without $\Phi$ that contain two $x$'s.} Up to sign, these are of the form
\begin{align*}
\sum_{i_1,\dots,i_r}\ip{x_{k_1}}{u_{j_1}e_{i_1}}\ip{e_{i_1}}{u_{j_2}e_{i_2}}\cdots\ip{e_{i_{r-1}}}{u_{j_r}e_{i_r}}\ip{e_{i_r}}{x_{k_2}},\quad r\geq 0,
\end{align*}
where for $r=0$ we have $\ip{x_{k_1}}{x_{k_2}}$, or equivalently, by Proposition \ref{pro:IPcontraction}, $\ip{x_{k_1}}{L_{u_{j_1}}\cdots L_{u_{j_r}}(x_{k_2})}$. Let us explain why there is no loss of generality in assuming $r\leq7$. Suppose $r=8$. Then
\begin{align*}
\sum_{\pi\in\calS_8}\sgn(\pi)\, L_{u_{j_{\pi(1)}}}\cdots L_{u_{j_{\pi(8)}}}=0
\end{align*}
since $\dim\Imag\OO=7$. Further, since $L_uL_v+L_vL_u=-2\ip uv\id$ for $u,v\in\Imag\OO$,
\begin{align*}
\sgn(\pi)\, L_{u_{j_{\pi(1)}}}\cdots L_{u_{j_{\pi(8)}}}=L_{u_{j_{1}}}\cdots L_{u_{j_{8}}}+p_\pi(u_{j_1},\dots,u_{j_8})
\end{align*}
holds for any $\pi\in\calS_8$, where $p_\pi$ is some $\End(\OO)$-valued polynomial in inner products and compositions of less than eight left multiplications. Together, we have
\begin{align*}
0=\sum_{\pi\in\calS_8}\sgn(\pi)\, L_{u_{j_{\pi(1)}}}\cdots L_{u_{j_{\pi(8)}}}=8!\,L_{u_{j_{1}}}\cdots L_{u_{j_{8}}}+\sum_{\pi\in\calS_8}\sgn(\pi)\,p_\pi(u_{j_1},\dots,u_{j_8}),
\end{align*}
i.e., $L_{u_{j_{1}}}\cdots L_{u_{j_{8}}}$ expressed in terms of inner products and at most seven $L$'s. Inductively, the same statement holds for any composition $L_{u_{j_1}}\cdots L_{u_{j_r}}$ with $r\geq8$. Similarly we may assume that $j_1<\cdots<j_r$ and $k_1<k_2$.

\item Note that (a) and (b)  together cover all chains containing no Cayley calibrations. Due to higher complexity of the other cases, let us formalize the reductive method we used in item (b). Namely, let us introduce an equivalence relation on $P_{s,r}$ as follows: We put $p\sim q$ if $p-q$ is expressible in the invariants \eqref{eq:invb}--\eqref{eq:inve}. For instance, \eqref{eq:PhiIP} implies
\begin{align}
\label{eq:Phisim}
\Phi(x_{k_1},x_{k_2},x_{k_3},x_{k_4})\sim\ip{x_{k_1}}{(x_{k_2}\b{x_{k_3}})x_{k_4}}.
\end{align}
Similarly, the following consequence of \eqref{eq:Phisim} and \eqref{eq:RwRz} holds:
\begin{align}
\label{eq:Phiusim}
\Phi(x_{k_1},u_j x_{k_2},x_{k_3},x_{k_4})\sim\Phi(x_{k_1},x_{k_2},u_j x_{k_3},x_{k_4}),
\end{align}
and extends to other pairs of entries by skew-symmetry.

\item \emph{Chains with precisely one $\Phi$.} First, if the chain contains two $x$'s, according to Proposition \ref{pro:IPcontraction} and \eqref{eq:Phiusim}, it is necessarily equivalent, for some $r\geq0$, to
\begin{align*}
\sum_i\Phi\big(e_i,e_i,L_{u_{j_r}}\cdots L_{u_{j_r}}(x_{k_1}),x_{k_2}\big)=0.
\end{align*}
Clearly the same applies when there is no $x$. Similarly, any chain with four $x$'s is equivalent to
\begin{align*}
\Phi\big(L_{u_{j_1}}\cdots L_{u_{j_r}}(x_{k_1}),x_{k_2},x_{k_3},x_{k_4}\big), \quad r\geq 0.
\end{align*}
If $r=2$, however, we get
\begin{align*}
&2\Phi\big(x_{k_1},u_{j_1}(u_{j_2}x_{k_2}),x_{k_3},x_{k_4}\big)\\
&\qquad\sim\Phi\big(x_{k_1},u_{j_1}(u_{j_2}x_{k_2}),x_{k_3},x_{k_4}\big)+\Phi\big(x_{k_1},u_{j_2}(u_{j_1}x_{k_2}),x_{k_3},x_{k_4}\big)\\
&\qquad=\Phi\big(x_{k_1},{(L_{u_{j_1}}L_{u_{j_2}} + L_{u_{j_2}}L_{u_{j_1}})(x_{k_2})},x_{k_3},x_{k_4}\big)\\
&\qquad=-2\ip{u_{j_1}}{u_{j_2}}\Phi(x_{k_1},x_{k_2},x_{k_3},x_{k_4})\\
& \qquad\sim0.
\end{align*}
By induction this easily extends also to $r>2$ and thus we may assume $r\leq1$.

\item \emph{Chains with several $\Phi$'s.} Observe that $\Phi(x_{k_1}\times x_{k_2}\times x_{k_3},x_{k_4},x_{k_5},x_{k_6})\in P_{0,6}$ and thus Theorem \ref{thm:FFTspin} yields
\begin{align*}
\Phi(x_{k_1}\times x_{k_2}\times x_{k_3},x_{k_4},x_{k_5},x_{k_6})\sim0.
\end{align*}
Consequently, we have
\begin{align*}
\sum_i\Phi(u_je_i,x_{k_1},x_{k_2},x_{k_3})\Phi(e_i,x_{k_4},x_{k_5},x_{k_6})&=\sum_i\ip{u_je_i}{x_{k_1}\times x_{k_2}\times x_{k_3}}\ip{e_i}{x_{k_4}\times x_{k_5}\times x_{k_6}}\\
&=-\sum_i\ip{e_i}{u_j(x_{k_1}\times x_{k_2}\times x_{k_3})}\ip{e_i}{x_{k_4}\times x_{k_5}\times x_{k_6}}\\
&=-\ip{u_j(x_{k_1}\times x_{k_2}\times x_{k_3})}{x_{k_4}\times x_{k_5}\times x_{k_6}}\\
&=-\Phi\big(u_j(x_{k_1}\times x_{k_2}\times x_{k_3}),x_{k_4}, x_{k_5}, x_{k_6}\big)\\
&\sim-\Phi(x_{k_1}\times x_{k_2}\times x_{k_3},u_jx_{k_4}, x_{k_5}, x_{k_6})\\
&\sim0.
\end{align*}
Using Proposition \ref{pro:IPcontraction}, it follows easily by induction that any chain can be reduced to chains containing at most one $\Phi$. This completes the proof.\qedhere
\end{enuma}
\end{proof}

\section{$\Spin(7)$-invariant alternating forms}

\label{s:Spin7invforms}

Based on the first fundamental theorem (Theorem \ref{thm:Plm}) proved in the previous section, the algebra of $\Spin(7)$-invariant alternating forms on the space $\Imag\OO\oplus\OO\oplus\Imag\OO\oplus\OO$ will be now described in terms of a generating set. In order to express the generators in a closed form, the crucial step is to embed the space into a larger algebra of octonion-valued forms.

Let $V$ be a finite-dimensional, real vector space. The space $\OO\otimes \largewedge V^*$ is naturally a graded algebra. Its product will again be denoted by the wedge symbol, even though it is no longer associative. Observe that $\largewedge V^*\subset \OO\otimes \largewedge V^*$ is an associative subalgebra. For any $\RR$-linear function $F\maps\OO\OO$ and $u\otimes\varphi\in\OO\otimes\largewedge V^*$ we define $F(u\otimes\phi)=F(u)\otimes\phi$ (in what follows, the tensor product symbol in expressions of this type will usually be suppressed). Observe that any octonion-valued forms $\alpha,\beta\in\OO\otimes\largewedge V^*$ of degree $k$ and $l$ satisfy
\begin{align}
\label{eq:conjAB}
\b{\alpha\wedge\beta}&=(-1)^{kl}\b\beta\wedge\b\alpha,\\
\label{eq:RealAB}
 \Real(\alpha\wedge\beta)&=\Real(\b\alpha\wedge\b\beta).
\end{align} 
Finally, for $\alpha_1,\dots,\alpha_r\in\OO\otimes\largewedge V^*$ we denote
\begin{align*}
\calL(\alpha_1,\dots,\alpha_r)&=\big(\cdots(\alpha_1\wedge\alpha_2)\wedge\cdots\big)\wedge\alpha_r,\\
 \calR(\alpha_1,\dots,\alpha_r)&=\alpha_1\wedge\big(\cdots\wedge(\alpha_{r-1}\wedge\alpha_r)\cdots\big),
\end{align*}
and for $s\in\NN_0$ we abbreviate
\begin{align*}
\calL(\alpha_1,\dots,\alpha_{j-1},\alpha_j[s],\alpha_{j+1},\dots,\alpha_r)=\calL(\alpha_1,\dots,\alpha_{j-1},\overbrace{\alpha_j,\dots,\alpha_j}^{s\text{-times}},\alpha_{j+1},\dots,\alpha_r),
\end{align*}
and similarly for $\calR$.

For the rest of this section, we will consider the vector space $V=\Imag\OO\oplus\OO\oplus\Imag\OO\oplus\OO$ in which case one has the obvious tetra-grading
\begin{align*}
\largewedge V^*=\bigoplus_{k,l,m,n}\largewedge^{k,l,m,n}V^*.
\end{align*}
Let $\theta_0,\theta_1,\varphi_0,\varphi_1\in\OO\otimes\largewedge^1V^*$ be the octonionic coordinate  functions and denote by
\begin{align}
\label{eq:O-coordinates}
\theta_0=\sum_{a=1}^7e_a\theta_0^a,\quad\theta_1=\sum_{a=0}^7e_a\theta_1^a,\quad\varphi_0=\sum_{a=1}^7e_a\varphi_0^a,\quad\text{and}\quad\varphi_1=\sum_{a=0}^7e_a\varphi_1^a
\end{align}
their decompositions in the standard basis of $\OO$.

\begin{theorem}
\label{thm:Spin7invforms}
The algebra $\big(\largewedge V^*\big)^{\Spin(7)}$ is generated by the following 96 elements:
\begin{enuma}
\item 1 element
\begin{align*}
[1,0,1,0]=-\Real(\theta_0\wedge\varphi_0);
\end{align*}
\item 8 elements
\begin{align*}
[k,0,7-k,0]=\frac{1}{k!(7-k)!}\sum_{\pi\in\calS_7}\sgn\pi\,\theta_0^{\pi(1)}\wedge\cdots\wedge\theta_0^{\pi(k)}\wedge\varphi_0^{\pi(k+1)}\wedge\cdots\wedge\varphi_0^{\pi(7)},\quad k\in\{0,\dots, 7\};
\end{align*}
\item 36 elements
\begin{align*}
[k_1,1,k_2,1]&=\frac1{k_1!k_2!}\Real\calR\big(\b{\theta_1},\theta_0[k_1],\varphi_0[k_2],\varphi_1\big),\quad k_1+k_2\in\{0,\dots,7\};
\end{align*}
\item 36 elements
\begin{align*}
[k_1,2,k_2,0]&=\frac{1}{2k_1!k_2!}\Real\calR\big(\b{\theta_1},\theta_0[k_1],\varphi_0[k_2],\theta_1\big),\\
[k_1,0,k_2,2]&=\frac{1}{2k_1!k_2!}\Real\calR\big(\b{\varphi_1},\theta_0[k_1],\varphi_0[k_2],\varphi_1\big),\quad k_1+k_2\in\{1,2,5,6\};
\end{align*}
\item 15 elements
\begin{align*}
[k_1,4,k_2,0]&=\frac1{24}\Real\calL\big(\theta_0[k_1],\varphi_0[k_2],\theta_1,\b{\theta_1},\theta_1,\b{\theta_1}\big),\\
[k_1,3,k_2,1]&=\frac16\Real\calL\big(\theta_0[k_1],\varphi_0[k_2],\theta_1,\b{\theta_1},\theta_1,\b{\varphi_1}\big),\\
[k_1,2,k_2,2]&=\frac14\Real\calL\big(\theta_0[k_1],\varphi_0[k_2],\theta_1,\b{\theta_1},\varphi_1,\b{\varphi_1}\big),\\
[k_1,1,k_2,3]&=\frac16\Real\calL\big(\theta_0[k_1],\varphi_0[k_2],\theta_1,\b{\varphi_1},\varphi_1,\b{\varphi_1}\big),\\
[k_1,0,k_2,4]&=\frac1{24}\Real\calL\big(\theta_0[k_1],\varphi_0[k_2],\varphi_1,\b{\varphi_1},\varphi_1,\b{\varphi_1}\big),\quad k_1+k_2\in\{0,1\}.
\end{align*}
\end{enuma}
\end{theorem}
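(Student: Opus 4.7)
The plan is to reduce the claim to the first fundamental theorem just established (Theorem \ref{thm:Plm}), by passing from multilinear to alternating forms via block-wise antisymmetrization. Exploiting the tetra-grading $\largewedge V^*=\bigoplus_{k_1,l_1,k_2,l_2}\largewedge^{k_1,l_1,k_2,l_2}V^*$, I identify $(\largewedge^{k_1,l_1,k_2,l_2}V^*)^{\Spin(7)}$ with the subspace of $P_{k_1+k_2,l_1+l_2}$ whose elements are alternating within each of the four variable-blocks. The proof then has two sides: invariance of the 96 listed forms, and spanning.

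For invariance, families (a) and (b) are immediate from $\Spin(7)\subset\SO(\Imag\OO)=\SO(7)$ (preservation of the inner product and of the volume form on $\Imag\OO$). For (c) and (d), the key input is the equivariance relation \eqref{eq:Spin7a}: iterating $g(xy)=\chi_g(x)g(y)$ shows that each right-associated product $\calR(\theta_0[k_1],\varphi_0[k_2],\xi)$ with $\xi\in\{\theta_1,\varphi_1\}$ is an $\OO$-valued form transforming under $\Spin(7)$ via the spin representation. Since $\Real(\b xy)=\ip xy$ and $\Spin(7)\subset\SO(\OO)$ preserves the inner product on $\OO$, the outer pairing $\Real(\b{\theta_1}\wedge\,\cdot\,)$ or $\Real(\b{\varphi_1}\wedge\,\cdot\,)$ produces an invariant form—even though the individual factors are not $\b{\phantom{x}}$-covariant. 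For (e), where the left-associated product is not $\OO$-valued in a single irreducible component, invariance would be verified by expanding coordinate-wise and recognizing the real part as a polynomial in the FFT generators of Theorem \ref{thm:Plm}.

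For spanning, I take an invariant in $(\largewedge^{k_1,l_1,k_2,l_2}V^*)^{\Spin(7)}$, polarize, and apply Theorem \ref{thm:Plm} to write the associated multilinear form as a polynomial in the five FFT generators listed there. Since block-wise antisymmetrization of a product of generators reduces to the wedge product of their individual antisymmetrizations, it suffices to enumerate the antisymmetrizations of the atomic FFT generators. The inner product $\ip{u_i}{u_j}$ vanishes when both indices lie in the same vector block and yields $[1,0,1,0]$ across blocks; the determinant $\det(u_1,\ldots,u_7)$ gives $[k,0,7-k,0]$ upon distributing its seven arguments between the two vector blocks; the chain $\ip{x_{k_1}}{L_{u_{j_1}}\cdots L_{u_{j_r}}(x_{k_2})}$ with $r\leq 7$ becomes $[k_1,1,k_2,1]$ when the two spinors lie in different blocks, and one of $[k_1,2,k_2,0]$ or $[k_1,0,k_2,2]$ when they lie in the same block; finally, the Cayley-calibration invariants $\Phi(x_1,\ldots,x_4)$ and $\Phi(ux_1,x_2,x_3,x_4)$ yield the (e)-type forms when $k_1+k_2\in\{0,1\}$.

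The main technical obstacle is justifying the restrictions on $k_1+k_2$ in families (d) and (e). For (d), the bound $k_1+k_2\in\{1,2,5,6\}$ arises from a parity argument: simultaneously antisymmetrizing in the two same-block spinors and in the $r=k_1+k_2$ vectors produces, via the adjoint identity $L_u^*=-L_u$ for $u\in\Imag\OO$ and the reversal sign $(-1)^{r(r-1)/2}$, a scalar factor $1-(-1)^{r(r+1)/2}$, which vanishes precisely for $r\in\{0,3,4,7\}$. For (e), when $k_1+k_2\geq 2$ the antisymmetrized Cayley expressions decompose into products of already-constructed invariants using the Clifford identity $L_uL_v+L_vL_u=-2\ip uv\id$ and the equivalence $\Phi(ux_1,x_2,\ldots)\sim\Phi(x_1,ux_2,\ldots)$ from the proof of Theorem \ref{thm:Plm}. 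Tracking the combinatorics of these reductions, which also produces the numerical prefactors $\tfrac1{k_1!k_2!}$, $\tfrac1{24}$, etc., is the computational heart of the argument.
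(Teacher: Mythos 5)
Your overall strategy coincides with the paper's: polarize an invariant alternating form to a multilinear invariant, apply the first fundamental theorem (Theorem \ref{thm:Plm}), and then antisymmetrize the atomic generators block-wise. The genuine problem is your justification of the restriction $k_1+k_2\in\{1,2,5,6\}$ in family (d). You claim that block-wise antisymmetrization of $\ip{x_{k_1}}{L_{u_{j_1}}\cdots L_{u_{j_r}}(x_{k_2})}$ with both spinors in the same block carries a scalar factor $1-(-1)^{r(r+1)/2}$ and hence \emph{vanishes} for $r\in\{0,3,4,7\}$. This step fails. The $r$ vector arguments are split between the two copies of $\Imag\OO$ (only $k_1$, respectively $k_2$, of them are antisymmetrized among themselves), so the reversal sign $(-1)^{r(r-1)/2}$ is not available: to reverse $L_{u_{j_1}}\cdots L_{u_{j_r}}$ one must use the Clifford relations $L_uL_v+L_vL_u=-2\ip{u}{v}\id$, and the correction terms pairing a vector from the $\theta_0$-block with one from the $\varphi_0$-block survive the block-wise antisymmetrization. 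Consequently your parity argument shows at best that, \emph{modulo} these corrections, the form equals minus itself; the corrections themselves produce multiples of $[1,0,1,0]\wedge[k_1-1,2,k_2-1,0]$, so the correct conclusion for $r\in\{3,4,7\}$ is reducibility (the form is a product of the listed generators), not vanishing — in general these forms are nonzero, and only the case $r=0$ dies outright, by \eqref{eq:conjAB}. Since expressibility in the listed generators is all that spanning requires, the theorem survives, but your stated reason for discarding $r\in\{0,3,4,7\}$ is wrong as it stands and must be repaired by tracking the Clifford correction terms, exactly as the paper does.

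Two further remarks. For family (e) no reduction for $k_1+k_2\geq 2$ is needed at all: Theorem \ref{thm:Plm} lists only $\Phi(x_{k_1},x_{k_2},x_{k_3},x_{k_4})$ and $\Phi(u_jx_{k_1},x_{k_2},x_{k_3},x_{k_4})$, i.e.\ Cayley-type generators with at most one vector argument, so block-wise antisymmetrization can never produce an (e)-form with more vector legs; the reduction you invoke was already carried out in the proof of the fft, and presenting it as a remaining obstacle suggests a double count. Finally, the theorem asserts specific closed expressions and normalizations for the generators, so the identification of each antisymmetrized chain with $\frac1{k_1!k_2!}\Real\calR\big(\b{\theta_1},\theta_0[k_1],\varphi_0[k_2],\varphi_1\big)$ and its analogues (via Lemma \ref{pro:IPcontraction} and the coordinate expansions) still has to be performed; you acknowledge this computation but do not supply it.
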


\begin{proof}
Throughout the proof,  a convention similar to the one in Section \ref{s:InvSpin7} will be in effect; namely, $u$ and $v$ will denote elements of the vector representation $\Imag\OO$, while $x$ and $y$ will denote elements of the spin representation $\OO$ of $\Spin(7)$. Consider an arbitrary $\phi\in\big(\largewedge^{k,l,m,n}V^*\big)^{\Spin(7)}$. Then
\begin{align*}
p&=p(u_1,\dots,u_k,v_1,\dots,v_m,x_1,\dots,x_l,y_1,\dots,y_n)\\
&=\phi\left(\begin{pmatrix}u_1\\0\\0\\0\end{pmatrix},\dots,\begin{pmatrix}u_k\\0\\0\\0\end{pmatrix},\begin{pmatrix}0\\x_1\\0\\0\end{pmatrix},\dots,\begin{pmatrix}0\\x_l\\0\\0\end{pmatrix},\begin{pmatrix}0\\0\\v_1\\0\end{pmatrix},\dots,\begin{pmatrix}0\\0\\v_m\\0\end{pmatrix},\begin{pmatrix}0\\0\\0\\y_1\end{pmatrix},\dots,\begin{pmatrix}0\\0\\0\\y_n\end{pmatrix}\right)
\end{align*}
is clearly an element of $ P_{k+m,l+n}$. According to Theorem \ref{thm:Plm}, $p$ must be a polynomial in the generating elements listed therein, hence a linear combination of the following monomials:
\begin{align}
\label{eq:monomial}
\prod_{j=1}^Jp_j(u_{\kappa(k_{j-1}+1)},\dots, u_{\kappa(k_j)},v_{\mu(m_{j-1}+1)},\dots, v_{\mu(m_j)},x_{\lambda(l_{j-1}+1)},\dots, x_{\lambda(l_j)},y_{\nu(n_{j-1}+1)},\dots, y_{\nu(n_j)}),
\end{align}
for some permutations $\kappa\in\calS_k$, $\lambda\in\calS_l$, $\mu\in\calS_m$, $\nu\in\calS_n$, some integers
\begin{gather*}
0=k_0\leq k_1\leq\cdots\leq k_J=k,\\
0=l_0\leq l_1\leq\cdots\leq l_J=l,\\
0=m_0\leq m_1\leq\cdots\leq m_J=m,\\
0=n_0\leq n_1\leq\cdots\leq n_J=n,
\end{gather*}
and some generators $p_j\in P_{\Delta k_j+\Delta m_j,\Delta l_j+\Delta n_j}$, where $\Delta k_j=k_j-k_{j-1}$, etc. 
 
For every vector space $W$ with basis $w_1,\ldots, w_N$ and dual basis $\alpha_1,\ldots \alpha_N$, the identity
$$\psi =\frac{1}{K!}  \sum_{i_1,\ldots,i_K=1}^N \psi(w_{i_1},\ldots, w_{i_K}) \alpha_{i_1} \wedge  \cdots \wedge \alpha_{i_K}$$
obviously holds for every $\psi\in \largewedge^K W^*$.  Applying this  to $\phi\in \big(\largewedge^{k,l,m,n}V^*\big)^{\Spin(7)}$, we obtain
$$\phi=\frac{1}{k!l!m!n!} \sum_\I p_{\I}  \,\theta_0^{i^{(u)}_1}\wedge\cdots\wedge\theta_0^{i^{(u)}_{k}}\wedge \theta_1^{i^{(x)}_1}\wedge\cdots\wedge\theta_1^{i^{(x)}_{l}}\wedge \varphi_0^{i^{(v)}_1}\wedge\cdots\wedge\varphi_0^{i^{(v)}_{m}}\wedge \varphi_1^{i^{(y)}_1}\wedge\cdots\wedge\varphi_1^{i^{(y)}_{n}},
$$
where the summation is taken over all  multi-indices
\begin{align*}
\I&=\left(i^{(u)}_1,\dots,i^{(u)}_k,i^{(x)}_1,\dots,i^{(x)}_l,i^{(v)}_1,\dots,i^{(v)}_m,i^{(y)}_1,\dots,i^{(y)}_n\right)\\
&\in\{1,\dots,7\}^k\times\{0,\dots,7\}^l\times\{1,\dots,7\}^m\times\{0,\dots,7\}^n,
\end{align*}
and 
$$ p_\I= p\left( e_{i^{(u)}_1},\dots,e_{i^{(u)}_k},e_{i^{(x)}_1},\dots,e_{i^{(x)}_l},e_{i^{(v)}_1},\dots,e_{i^{(v)}_m},e_{i^{(y)}_1},\dots,e_{i^{(y)}_n}\right).$$
In view of \eqref{eq:monomial}, $\phi$ is therefore generated by the following forms:
\begin{align}
\begin{split}
&\sum_{\I'}q\left(e_{i^{(u)}_1},\dots, e_{i^{(u)}_{k'}},e_{i^{(v)}_1},\dots, e_{i^{(v)}_{m'}},e_{i^{(x)}_1},\dots, e_{i^{(x)}_{l'}},e_{i^{(y)}_1},\dots, e_{i^{(y)}_{n'}}\right)
\\
&\qquad\times\theta_0^{i^{(u)}_1}\wedge\cdots\wedge\theta_0^{i^{(u)}_{k'}}\wedge \theta_1^{i^{(x)}_1}\wedge\cdots\wedge\theta_1^{i^{(x)}_{l'}}\wedge \varphi_0^{i^{(v)}_1}\wedge\cdots\wedge\varphi_0^{i^{(v)}_{m'}}\wedge \varphi_1^{i^{(y)}_1}\wedge\cdots\wedge\varphi_1^{i^{(y)}_{n'}},
\end{split}
\end{align}
where $k',l',m',n'$ are non-negative integers,
\begin{align*}
\I'&=\left(i^{(u)}_1,\dots,i^{(u)}_{k'},i^{(x)}_1,\dots,i^{(x)}_{l'},i^{(v)}_1,\dots,i^{(v)}_{m'},i^{(y)}_1,\dots,i^{(y)}_{n'}\right)\\
&\in\{1,\dots,7\}^{k'}\times\{0,\dots,7\}^{l'}\times\{1,\dots,7\}^{m'}\times\{0,\dots,7\}^{n'},
\end{align*}
and $q\in P_{k'+m',l'+n'}$ is a generating polynomial from Theorem \ref{thm:Plm}.

In the rest of the proof we will separately investigate all possible combinations of a generator $q$ and integers $k',l',m',n'$ that may occur.

\begin{enuma}
\item
Consider $q=\ip{u_1}{u_2}\in P_{2,0}$. Then $k'+m'=2$ and $l'=n'=0$. First, if $k'=2$ and $m'=0$,
\begin{align*}
\sum_{i_1,i_2=1}^7\ip{e_{i_1}}{e_{i_2}}\theta_0^{i_1}\wedge\theta_0^{i_2}=\sum_{i_1=1}^7\theta_0^{i_1}\wedge\theta_0^{i_1}=0,
\end{align*}
and similarly if $k'=0$ and $m'=2$. Second, for $k'=m'=1$ we have
\begin{align*}
\sum_{i_1,i_2=1}^7\ip{e_{i_1}}{e_{i_2}}\theta_0^{i_1}\wedge\varphi_0^{i_2}&=\sum_{i_1,i_2=1}^7\Real(\b{e_{i_1}}e_{i_2})\theta_0^{i_1}\wedge\varphi_0^{i_2}\\
&=-\sum_{i_1,i_2=1}^7\Real(e_{i_1}e_{i_2})\theta_0^{i_1}\wedge\varphi_0^{i_2}\\
&=-\Real(\theta_0\wedge\varphi_0).
\end{align*}

\item Let $q=\det(u_1,\dots,u_7)\in P_{7,0}$. Then $l'=n'=0$, $m'=7-k'$, and for $0\leq k'\leq 7$ one has
\begin{align*}
&\sum_{i_1,\dots,i_7=1}^7\det(e_{i_1},\dots,e_{i_7})\,\theta_0^{i_1}\wedge\cdots\wedge\theta_0^{i_{k'}}\wedge\varphi_0^{i_{k'+1}}\wedge\cdots\wedge\varphi_0^{i_7}\\
&\qquad=\sum_{\pi\in\calS_7}\det(e_{\pi(1)},\dots,e_{\pi(7)})\,\theta_0^{\pi(1)}\wedge\cdots\wedge\theta_0^{\pi(k')}\wedge\varphi_0^{\pi(k'+1)}\wedge\cdots\wedge\varphi_0^{\pi(7)},\\
&\qquad=\sum_{\pi\in\calS_7}\sgn\pi\,\theta_0^{\pi(1)}\wedge\cdots\wedge\theta_0^{\pi(k')}\wedge\varphi_0^{\pi(k'+1)}\wedge\cdots\wedge\varphi_0^{\pi(7)}.
\end{align*}

\item Consider $q=\ip{x_1}{L_{u_1}\cdots L_{u_r}(x_2)}\in P_{r,2}$ for some $0\leq r\leq7$ and assume $n'=l'=1$. For $0\leq k'\leq r$ one has $m'=r-k'$ and
\begin{align*}
&\sum_{j_1,j_2=0}^7\,\sum_{i_1,\dots,i_r=1}^7 \ip{e_{j_1}}{L_{e_{i_1}}\cdots L_{e_{i_r}}(e_{j_2})}\theta_0^{i_1}\wedge\cdots\wedge\theta_0^{i_{k'}}\wedge\theta_1^{j_1}\wedge\varphi_0^{i_{k'+1}}\wedge\cdots\wedge\varphi_0^{i_r}\wedge\varphi_1^{j_2}\\
&\quad= (-1)^{k'}\sum_{j_1,j_2=0}^7\,\sum_{i_1,\dots,i_r=1}^7\Real\big[L_{\b{e_{j_1}}}L_{e_{i_1}}\cdots L_{e_{i_r}}(e_{j_2})\big]\theta_1^{j_1}\wedge\theta_0^{i_1}\wedge\cdots\wedge\theta_0^{i_{k'}}\wedge\varphi_0^{i_{k'+1}}\wedge\cdots\wedge\varphi_0^{i_r}\wedge\varphi_1^{j_2}\\
&\quad= (-1)^{k'}\Real\calR\big(\b{\theta_1},\theta_0[k'],\varphi_0[r-k'],\varphi_1\big).
\end{align*}

\item
Let $q$ be as in the previous item and assume $l'=2$ and $n'=0$.  Exactly in the same way as before we obtain
\begin{align*}
(-1)^{k'}\Real\calR\big(\b{\theta_1},\theta_0[k'],\varphi_0[r-k'],\theta_1\big),\quad 0\leq k'\leq r\leq 7.
\end{align*}
Let us explain why in this case we may assume $r\in\{1,2,5,6\}$ in fact. First, if $r=0$, we have
\begin{align*}
\Real\calR\big(\b{\theta_1},\theta_0[0],\varphi_0[0],\theta_1\big)=\Real\big(\b{\theta_1}\wedge\theta_1\big)=\frac12\big(\b{\theta_1}\wedge\theta_1-\b{\theta_1}\wedge\theta_1\big)=0
\end{align*}
 by \eqref{eq:conjAB}. Second, using
\begin{align*}
L_uL_v+L_vL_u=-2\ip uv,\quad u,v\in\Imag\OO,
\end{align*}
for $j_1,j_2\in\{0,\dots,7\}$ and $i_1,\dots,i_r\in\{1,\dots,7\}$ one has
\begin{align*}
\Real\big[L_{\b{e_{j_2}}}L_{e_{i_1}}\cdots L_{e_{i_r}}(e_{j_1})\big]&=\ip{e_{j_2}}{L_{e_{i_1}}\cdots L_{e_{i_r}}(e_{j_1})}\\
&=(-1)^r\ip{L_{e_{i_r}}\cdots L_{e_{i_1}}(e_{j_2})}{e_{j_1}}\\
&=(-1)^{\frac r2(r+1)}\ip{e_{j_1}}{L_{e_{i_1}}\cdots L_{e_{i_r}}(e_{j_2})}+\tilde q(e_{j_1},e_{j_2},e_{i_1},\dots,e_{i_r})\\
&=(-1)^{\frac r2(r+1)}\Real\big[L_{\b{e_{j_1}}}L_{e_{i_1}}\cdots L_{e_{i_r}}(e_{j_2})\big]+\tilde q(e_{j_1},e_{j_2},e_{i_1},\dots,e_{i_r})
\end{align*}
where $\tilde q$ is a linear combination of polynomials $\tilde q_1\tilde q_2$ with $\tilde q_1\in P_{2,0}$ and $\tilde q_2\in P_{r-2,2}$. Hence,
\begin{align*}
&2\Real\calR\big(\b{\theta_1},\theta_0[k'],\varphi_0[r-k'],\theta_1\big)\\
&\quad=\sum\left\{\Real\big[L_{\b{e_{j_1}}}L_{e_{i_1}}\cdots L_{e_{i_r}}(e_{j_2})\big]-(-1)^{\frac r2(r+1)}\Real\big[L_{\b{e_{j_1}}}L_{e_{i_1}}\cdots L_{e_{i_r}}(e_{j_2})\big]-\tilde q\right\}\\
&\qquad\qquad\times \theta_1^{j_1}\wedge\theta_0^{i_1}\wedge\cdots\wedge\theta_0^{i_{k'}}\wedge\varphi_0^{i_{k'+1}}\wedge\cdots\wedge\varphi_0^{i_r}\wedge\theta_1^{j_2}
\end{align*}
is a multiple of $[1,0,1,0]\wedge[k'-1,2,r-1-k',0]$ if $r\in\{3,4,7\}$. The case $l'=0$ and $n'=2$ is completely analogous.

\item
Finally, observe from the proof of Theorem \ref{thm:Plm}, see in particular \eqref{eq:Phisim}, that in the statement of the theorem, the polynomials
\begin{align*}
\ip{1}{((x_{k_1}\b{x_{k_2}})x_{k_3})\b{x_{k_4}}}\quad\text{and}\quad\ip{1}{(((u_j x_{k_1})\b{x_{k_2}})x_{k_3})\b{x_{k_4}}}
\end{align*}
may equivalently replace $\Phi(x_{k_1},x_{k_2},x_{k_3},x_{k_4})$ and $\Phi(u_jx_{k_1},x_{k_2},x_{k_3},x_{k_4})$, respectively. Let, first, $q=\ip{1}{((x_1\b{x_2})x_3)\b{x_4}}\in P_{0,4}$, $k'=m'=0$, and $n'=4-l'$. If $l'=4$, we obtain
\begin{align*}
\sum_{j_1,\dots,j_4=0}^7\ip{1}{((e_{j_1}\b{e_{j_2}})e_{j_3})\b{e_{j_4}}}\theta_1^{j_1}\wedge\theta_1^{j_2}\wedge\theta_1^{j_3}\wedge\theta_1^{j_4}=\Real\calL\big(\theta_1,\b{\theta_1},\theta_1,\b{\theta_1}\big).
\end{align*}
Similarly, it is straightforward that one arrives at
\begin{align*}
\Real\calL\big(\theta_1,\b{\theta_1},\theta_1,\b{\varphi_1}\big),\,\Real\calL\big(\theta_1,\b{\theta_1},\varphi_1,\b{\varphi_1}\big),\,\Real\calL\big(\theta_1,\b{\varphi_1},\varphi_1,\b{\varphi_1}\big),\text{ and }\Real\calL\big(\varphi_1,\b{\varphi_1},\varphi_1,\b{\varphi_1}\big),
\end{align*}
depending on whether $l'$ equals 3, 2, 1, or 0, respectively. Completely analogous is the second case $q=\ip{1}{(((u x_1)\b{x_2})x_3)\b{x_4}}\in P_{1,4}$; for example, if $k'=0$, $l'=2$, $m'=1$, and $n'=2$, one has
\begin{align*}
\sum_{j_1,\dots,j_4=0}^7\,\sum_{i=1}^7\ip{1}{(((e_i e_{j_1})\b{e_{j_2}})e_{j_3})\b{e_{j_4}}}\theta_1^{j_1}\wedge\theta_1^{j_2}\wedge\varphi_0^i\wedge\varphi_1^{j_3}\wedge\varphi_1^{j_4}=\Real\calL\big(\varphi_0,\theta_1,\b{\theta_1},\varphi_1,\b{\varphi_1}\big).
\end{align*}
\end{enuma}
Since all the constructed forms are clearly $\Spin(7)$-invariant and since we have at the same time exhausted all the possibilities provided by Theorem \ref{thm:Plm}, the proof is finished.
\end{proof}

\begin{remark}
\label{re:ZZ}
Note that each generator satisfies $[k,l,m,n]\in\largewedge^{k,l,m,n}V^*$ and that its coefficients in the standard basis generated by $\theta_0^1,\dots,\theta_0^7,\theta_1^0,\dots,\theta_1^7,\varphi_0^1,\dots,\varphi_0^7,\varphi_1^0,\dots,\varphi_1^7$ are integers.
\end{remark}

\section{$\b{\Spin(9)}$-invariant differential forms}

\label{s:Spin9invforms}

As a matter of fact, we have already constructed the algebra of $\b{\Spin(9)}$-invariant forms on the sphere bundle $S\OO^2$. It remains that the results of the previous section are given the right meaning which is the goal of this section. Moreover, the forms will be described in such a way that will later be particularly convenient for differentiation.

Let us first recall a standard construction of invariant differential forms on a homogeneous space (for reference see \cite[\S3]{Wang:OnInvariant} and \cite[\S21]{Lee:Smooth}). Let $G$ be a Lie group with a transitive left action on a smooth manifold $M$. Fix a point $p\in M$ and denote $H=\Stab_pG$. Then $M\cong G/H$ and one has the projection $\pi\maps G M\mape g{g(p)}$. We will need the following folklore fact. Note that the left and right translation are with respect to the product on $G$.

\begin{lemma}
\label{lem:betatilda}
Let $\beta\in\Omega^*(M)^G$. Then the form $\wt\beta=\pi^*\beta\in\Omega^*(G)$ satisfies
\begin{align}
\nonumber L_g^*\wt\beta&=\wt\beta,\quad g\in G;\\
\nonumber R_h ^*\wt\beta&=\wt\beta,\quad h\in H;\\
\label{eq:horizontalBeta}X\lrcorner\wt\beta&=0,\quad X\in\ker d\pi.
\end{align}
Conversely, if $\wt\beta\in\Omega^*(G)$ satisfies these properties, there is a unique $\beta\in\Omega^*(M)^G$ with $\wt\beta=\pi^*\beta$.
\end{lemma}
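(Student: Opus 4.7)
The plan is to verify each of the three properties in the forward direction by direct computation using how $\pi$ intertwines with translations, and then build $\beta$ pointwise from $\wt\beta$ in the converse direction.

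For the forward direction, the key observations are that $\pi\circ L_g=g\circ \pi$ (because the $G$-action on $M=G/H$ is transitive and compatible with left multiplication) and $\pi\circ R_h=\pi$ for every $h\in H$ (because $H$ stabilizes $p$). Given $\beta\in\Omega^*(M)^G$, left invariance follows from
\[
L_g^*\wt\beta = L_g^*\pi^*\beta = (\pi\circ L_g)^*\beta = \pi^*(g^*\beta) = \pi^*\beta = \wt\beta,
\]
right $H$-invariance from $R_h^*\wt\beta = (\pi\circ R_h)^*\beta=\pi^*\beta=\wt\beta$, and the horizontality condition \eqref{eq:horizontalBeta} from $(X\lrcorner \wt\beta)(Y_1,\dots)=\beta(d\pi\, X, d\pi\, Y_1,\dots)=0$ whenever $d\pi\, X=0$.

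For the converse, given $\wt\beta$ satisfying all three properties, I would define $\beta$ at a point $q\in M$ as follows. Pick any $g\in G$ with $\pi(g)=q$ and any tangent vectors $V_1,\dots,V_k\in T_gG$ with $d\pi_g V_i = v_i$, and set
\[
\beta_q(v_1,\dots,v_k)=\wt\beta_g(V_1,\dots,V_k).
\]
I need to show this is well defined. Independence of the lift is immediate from the horizontality property, since any two lifts differ by an element of $\ker d\pi_g$. Independence of $g$ is the one step requiring care and is the main technical point of the proof: any other preimage is $g'=gh$ for some $h\in H$, and using $d\pi\circ dR_h=d\pi$ one sees that $V_i':=(dR_h)_g V_i$ is a lift of $v_i$ at $g'$. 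Then right $H$-invariance gives
\[
\wt\beta_{g'}(V_1',\dots,V_k')=(R_h^*\wt\beta)_g(V_1,\dots,V_k)=\wt\beta_g(V_1,\dots,V_k),
\]
so the value is independent of all choices.

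Smoothness of $\beta$ follows by choosing local smooth sections of the submersion $\pi$ and expressing $\beta$ in terms of $\wt\beta$ and such a section. By construction $\pi^*\beta=\wt\beta$, since evaluating $\pi^*\beta$ at $g$ on any $V_1,\dots,V_k\in T_gG$ gives $\beta_{\pi(g)}(d\pi\, V_i)$, and using the representative $g$ together with the lifts $V_i$ in the definition of $\beta$ recovers exactly $\wt\beta_g(V_1,\dots,V_k)$. Invariance of $\beta$ under $G$ is then a formal consequence of the intertwining relation $\pi\circ L_{g'}=g'\circ\pi$ combined with left invariance of $\wt\beta$. Uniqueness follows from injectivity of $\pi^*$ on forms, which in turn follows from the surjectivity of $d\pi_g$ at every point.
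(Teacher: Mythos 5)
Your proof is correct. The paper itself does not prove this lemma --- it is stated as a ``folklore fact'' with references to Wang and to Lee's textbook --- so there is no in-paper argument to compare against; your write-up supplies exactly the standard argument those sources give. All the essential points are present and in the right order: the intertwining identities $\pi\circ L_g=g\circ\pi$ and $\pi\circ R_h=\pi$ for the forward direction, and in the converse direction the pointwise definition of $\beta$ via lifts, with well-definedness split correctly into independence of the lift (horizontality plus multilinearity) and independence of the representative $g\in\pi^{-1}(q)$ (right $H$-invariance, using that any two preimages differ by right multiplication by an element of $H$), followed by smoothness via local sections of the submersion and uniqueness via injectivity of $\pi^*$.
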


In our case, $G=\b{\Spin(9)}=\Spin(9)\ltimes\OO^2$ acts on $M=S\OO^2$ as $(g,x)\cdot (y,v)=(gy+x,gv)$. Choosing $p=(0,E_0)\in S\OO^2$ with $E_0=\big(\begin{smallmatrix}1\\0\end{smallmatrix}\big)$, we have $H\cong\Spin(7)$ according to \eqref{eq:stab}, and $\pi\maps{G}{S\OO^2}$ given by $\pi(g,x)=(x,gE_0)$. Let further $\fgg=\spin(9)\ltimes\OO^2$ be the Lie algebra of $G$ and let $\omega=(\varphi,\theta)$ be the left $G$-invariant, $\fgg$-valued Maurer--Cartan 1-form on $G$. Keeping the index convention \eqref{eq:indexconvention}, the Maurer--Cartan equation \eqref{eq:MaurerCartan} translates into 
\begin{align}
\label{eq:MCE}
d\theta_k^a=-\sum_{j=0}^1\sum_{c=0}^7\varphi_{k,j}^{a,c}\wedge\theta_j^c\quad\text{and}\quad d\varphi_{k,l}^{a,b}=-\sum_{j=0}^1\sum_{c=0}^7\varphi_{k,j}^{a,c}\wedge\varphi_{j,l}^{c,b}.
\end{align} 

Let us define five octonion-valued forms, i.e., elements of $\OO\otimes\Omega(G)$ as follows:
\begin{align}
\label{eq:O-1-forms}
\alpha=\theta_0^0,\quad \theta_0=\sum_{a=1}^7e_a\theta_0^a,\quad\theta_1=\sum_{a=0}^7e_a\theta_1^a,\quad\varphi_0=\sum_{a=1}^7e_a\varphi_0^a,\quad\text{and}\quad\varphi_1=\sum_{a=0}^7e_a\varphi_1^a
\end{align}
where we denote $\varphi_k^a=\varphi_{k,0}^{a,0}$ (observe that $\varphi_{0,0}^{0,0}=0$). Note that the (non-associative, real) algebra of $\OO$-valued differential forms on a smooth manifold is defined in precisely the same manner as in the alternating case considered in the previous section. In particular, \eqref{eq:conjAB} and  \eqref{eq:RealAB} hold verbatim in this more general setting as well.

\begin{lemma}
\label{pro:H-action}
For any $\hat h=\left(\begin{pmatrix}\chi_{h^{-1}}&0\\0&h^{-1}\end{pmatrix},0\right)\in\Stab_p\b{\Spin(9)}$ one has
\begin{align*}
R^*_{\hat h}\alpha=\alpha, \quad  R^*_{\hat h}\theta_0=\chi_h\theta_0,\quad R^*_{\hat h}\theta_1=h\theta_1, \quad  R^*_{\hat h}\varphi_0=\chi_h\varphi_0,\quad R^*_{\hat h}\varphi_1=h\varphi_1.
\end{align*}

\end{lemma}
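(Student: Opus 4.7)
The plan is to deduce everything directly from the right-translation formulas \eqref{eq:rightTranslations} for the Maurer--Cartan form of $G = \Spin(9)\ltimes \OO^2$, by carefully decoding what the block-matrix action of $k = \begin{pmatrix}\chi_{h^{-1}}&0\\0&h^{-1}\end{pmatrix}\in \Spin(9)$ does to the octonionic one-forms defined in \eqref{eq:O-1-forms}. Since $\chi\colon \Spin(7)\to \SO(7)$ is a group homomorphism, one has $k^{-1} = \begin{pmatrix}\chi_h&0\\0&h\end{pmatrix}$, and moreover $\chi_h(1)=1$ by the extension convention following \eqref{eq:chig}.

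First I would handle $\alpha$, $\theta_0$, and $\theta_1$. Viewing $\theta = (\theta_{(0)},\theta_{(1)})^T$ as an $\OO^2$-valued one-form, with $\theta_{(0)} = \alpha + \theta_0$ (the full octonionic part of the first block) and $\theta_{(1)} = \theta_1$, the identity $R^*_{\hat h}\theta = k^{-1}\theta$ from \eqref{eq:rightTranslations} reads
\begin{align*}
\binom{R^*_{\hat h}(\alpha + \theta_0)}{R^*_{\hat h}\theta_1} = \binom{\chi_h(\alpha+\theta_0)}{h\,\theta_1}.
\end{align*}
Splitting $\chi_h(\alpha+\theta_0) = \alpha\cdot \chi_h(1) + \chi_h(\theta_0) = \alpha + \chi_h\theta_0$ using $\RR$-linearity of $\chi_h$ and $\chi_h(1)=1$, and then separating the $\RR$-part from the $\Imag\OO$-part (noting that $\chi_h$ preserves this decomposition), one obtains $R^*_{\hat h}\alpha = \alpha$, $R^*_{\hat h}\theta_0 = \chi_h\theta_0$, and $R^*_{\hat h}\theta_1 = h\theta_1$.

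Next I would treat $\varphi_0$ and $\varphi_1$. The key observation is that the $\OO$-valued pair $(\varphi_0,\varphi_1)$ is precisely what one gets by evaluating $\varphi$ on the vector $E_0 = \bigl(\begin{smallmatrix}1\\0\end{smallmatrix}\bigr)$: indeed, the block entries $\varphi_{k,0}^{a,0}$ appearing in \eqref{eq:O-1-forms} are exactly the components of $\varphi E_0$, and the missing term $\varphi_{0,0}^{0,0}$ vanishes because $\spin(9)\subset \so(16)$. Now $k E_0 = E_0$, because $\chi_{h^{-1}}(1)=1$ and $h^{-1}(0)=0$. Therefore, by \eqref{eq:rightTranslations},
\begin{align*}
R^*_{\hat h}(\varphi E_0) = (k^{-1}\varphi k)E_0 = k^{-1}\varphi (kE_0) = k^{-1}\varphi E_0 = k^{-1}\binom{\varphi_0}{\varphi_1} = \binom{\chi_h\varphi_0}{h\varphi_1},
\end{align*}
which splits into $R^*_{\hat h}\varphi_0 = \chi_h\varphi_0$ and $R^*_{\hat h}\varphi_1 = h\varphi_1$.

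The only subtle point is the bookkeeping in the identification $\varphi E_0 = (\varphi_0,\varphi_1)^T$ and the verification that $\alpha = \theta_0^0$ constitutes precisely the $\Real$-component of the first block of $\theta$, together with the fact that $\chi$ is a homomorphism with $\chi_g(1)=1$. I do not anticipate any technical obstacle beyond this; the content of the lemma is essentially a direct unpacking of \eqref{eq:rightTranslations} once the stabilizer description \eqref{eq:stab} and the extended vector representation $\chi_g$ are invoked.
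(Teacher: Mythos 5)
Your proposal is correct and is essentially the paper's own argument: the paper's proof is precisely the observation that $\theta=\big(\begin{smallmatrix}\alpha+\theta_0\\ \theta_1\end{smallmatrix}\big)$ and $\varphi E_0=\big(\begin{smallmatrix}\varphi_0\\ \varphi_1\end{smallmatrix}\big)$ combined with \eqref{eq:rightTranslations}, which is exactly what you do (with the additional, correct, bookkeeping that $kE_0=E_0$, $\chi_{h^{-1}}=(\chi_h)^{-1}$, and $\chi_h(1)=1$ split off $\alpha$ from $\theta_0$).
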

\begin{proof}
Observe that $\theta=\big(\begin{smallmatrix}\alpha+\theta_0\\\theta_1\end{smallmatrix}\big)$, $\varphi E_0=\big(\begin{smallmatrix}\varphi_0\\\varphi_1\end{smallmatrix}\big)$, and  use \eqref{eq:rightTranslations}.
\end{proof}

Strictly speaking, \eqref{eq:O-1-forms} seems in conflict with the previously established notation. However, $\alpha$ is nothing else than the pull-back under $\pi$ of the contact form on $S\OO^2$ given by \eqref{eq:ContactForm}. As for the other forms, consider $[k,l,m,n]\in\Omega\big(\b{\Spin(9)}\big)$ given formally as in Section \ref{s:Spin7invforms}, but with \eqref{eq:O-coordinates} replaced by \eqref{eq:O-1-forms}.

\begin{proposition}
	The forms $[k,l,m,n]\in\Omega\big(\b{\Spin(9)}\big)$ satisfy the assumptions of Lemma~\ref{lem:T8}, hence correspond to unique $\b{\Spin(9)}$-invariant forms on $S\OO^2$ which will be denoted by the same symbol.
\end{proposition}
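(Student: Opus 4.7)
The proposition requires verifying, for each generator $[k,l,m,n]$ and for $\alpha$ itself, the three conditions of Lemma~\ref{lem:betatilda}: left $G$-invariance, right $H$-invariance, and horizontality with respect to $\pi\colon G\to S\OO^2$.

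Left $G$-invariance is essentially automatic. Each coordinate $\theta_k^a$, $\varphi_{k,l}^{a,b}$ is a component of the Maurer--Cartan form and is therefore left-invariant, and so are the octonionic packages $\alpha,\theta_0,\theta_1,\varphi_0,\varphi_1$ defined in \eqref{eq:O-1-forms}. Since pullback is a homomorphism with respect to the (non-associative) wedge product of $\OO$-valued forms and commutes with both octonionic conjugation and $\Real$, left-invariance propagates to the full formal expressions defining $[k,l,m,n]$.

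For horizontality, by left-invariance it suffices to check the vanishing at the identity $e\in G$ on the tangent space to the fiber through $e$, namely $\mathfrak{h}=\Lie(\Stab_p G)\cong\spin(7)$. For $X\in\mathfrak{h}$ the translation component of $X$ is zero, so $\theta(X)=0$ and hence $\alpha(X)=\theta_0(X)=\theta_1(X)=0$. By the explicit description \eqref{eq:stab}, every such $X$ satisfies $XE_0=0$, and therefore $\varphi(X)E_0=0$, so also $\varphi_0(X)=\varphi_1(X)=0$. Any wedge product of the five basic $1$-forms and their conjugates is then annihilated by contraction with any $X\in\mathfrak{h}$, which gives $X\lrcorner[k,l,m,n]=0$.

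The main content lies in the right $H$-invariance. For $\hat h\in H$ corresponding to $h\in\Spin(7)$, Lemma~\ref{pro:H-action} shows that $R_{\hat h}^*$ transforms $\alpha,\theta_0,\theta_1,\varphi_0,\varphi_1$ in precisely the way that $h$ acts dually on the coordinate $1$-forms of $V=\Imag\OO\oplus\OO\oplus\Imag\OO\oplus\OO$ under the isotropy representation from \eqref{eq:O2underSpin7}. Because $R_{\hat h}^*$ is $\RR$-linear, commutes with conjugation and with $\Real$, and distributes over the wedge product of $\OO$-valued forms, it acts on the formal expression defining $[k,l,m,n]$ on $G$ by exactly the same rule as the $\Spin(7)$-action does on the correspondingly constructed element of $\largewedge V^*$. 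That element is $\Spin(7)$-invariant by Theorem~\ref{thm:Spin7invforms}, so we conclude $R_{\hat h}^*[k,l,m,n]=[k,l,m,n]$.

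The only delicate point --- and the main obstacle if one tries to argue more directly --- is that $h\in\Spin(7)$ does \emph{not} in general commute with octonionic conjugation, so one cannot simply manipulate bars past $h$; this is precisely the reason the invariants were arranged in the specific shapes of Theorem~\ref{thm:Spin7invforms}. Once those shapes are available, however, the transfer from $\largewedge V^*$ to $\Omega^*(G)$ is purely formal and uses no property of the generators beyond what is already built into Theorem~\ref{thm:Spin7invforms} and Lemma~\ref{pro:H-action}.
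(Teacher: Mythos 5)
Your proof is correct and follows essentially the same route as the paper: left-invariance because the forms are built from components of the Maurer--Cartan form, right $\Spin(7)$-invariance by transferring the invariance statement of Theorem~\ref{thm:Spin7invforms} through the intertwining property recorded in Lemma~\ref{pro:H-action}, and horizontality of the $1$-forms $\theta_k^a,\varphi_{k,0}^{a,0}$. The only cosmetic difference is that you check horizontality at the identity on $\mathfrak h$ and propagate it by left-invariance, whereas the paper reads it off directly at every point from $d\pi=\big(dx,(dg)E_0\big)$.
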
 
\begin{proof} Let us verify that the assumptions of Lemma~\ref{lem:T8} are satisfied. First, since $[k,l,m,n]$ is built from the components of the Maurer--Cartan form, it is left invariant. Second, the obvious map $\largewedge V^* \to \Omega^1( \b{\Spin(9)})$ interwines the $\Spin(7)$-actions. As a consequence, $R_h^*[k,l,m,n]=[k,l,m,n]$ for all $h\in \Spin(7)$. 
	Finally, observe that $d\pi=\big(dx,(dg)E_0\big)$. Therefore, the forms $\theta_k^a,\varphi_{k,0}^{a,0}\in\Omega^1( \b{\Spin(9)})$ are horizontal in the sense of \eqref{eq:horizontalBeta}. 
\end{proof}

\begin{theorem}
\label{thm:Spin9invforms}
The algebra $\Omega^*(S\OO^2)^{\b{\Spin(9)}}$ is generated by the following 97 elements:
\begin{align*}
\begin{alignedat}{2}
&\alpha;\\
&[1,0,1,0];\\
&[k,0,7-k,0],&\quad &k\in\{0,\dots,7\};\\
&[k_1,1,k_2,1],&\quad &k_1+k_2\in\{0,\dots,7\};\\
&[k_1,l,k_2,2-l],&\quad & k_1+k_2\in\{1,2,5,6\},l\in\{0,2\};\\
&[k_1,l,k_2,4-l],&\quad & k_1+k_2\in\{0,1\},l\in\{0,\dots,4\}.
\end{alignedat}
\end{align*}
\end{theorem}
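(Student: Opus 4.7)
The strategy is to reduce the problem to an invariant-theoretic computation on the tangent space $T_pS\OO^2$, where Theorem \ref{thm:Spin7invforms} already provides the answer. By transitivity of the $\b{\Spin(9)}$-action, the classical correspondence recorded in Lemma \ref{lem:betatilda} (together with the fact that the pullback $\pi^*$ is injective on $\Spin(7)$-basic forms) yields an isomorphism of bigraded algebras
\begin{align*}
	\Omega^*(S\OO^2)^{\b{\Spin(9)}} \;\xrightarrow{\;\sim\;}\; \bigl[\largewedge (T_pS\OO^2)^*\bigr]^{\Spin(7)},
\end{align*}
given by pointwise evaluation at $p$. So it suffices to produce a generating set of 97 elements of the right-hand side and then recognize it as the image of our 97 candidate forms.

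Next I would analyze the right-hand side. Under the stabilizer action described in \S\ref{ss:Spin97}, the tangent space decomposes as $T_pS\OO^2 \cong \RR \oplus V$ with $V = \Imag\OO \oplus \OO \oplus \Imag\OO \oplus \OO$, where the $\RR$-summand is the contact direction and is $\Spin(7)$-trivial. Consequently
\begin{align*}
	\bigl[\largewedge (T_pS\OO^2)^*\bigr]^{\Spin(7)} \;\cong\; \largewedge \RR^* \otimes \bigl[\largewedge V^*\bigr]^{\Spin(7)},
\end{align*}
and Theorem \ref{thm:Spin7invforms} furnishes 96 generators of the second tensor factor, while the first factor is generated by the dual basis vector of the $\RR$-summand. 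Altogether this gives 97 generators on the tangent-space side, matching the count in the statement.

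It remains to match these tangent-space generators with the 97 octonion-valued forms constructed via the Maurer--Cartan form on $\b{\Spin(9)}$. For this I would use the preceding proposition: the forms $[k,l,m,n]$ and $\alpha$ are already known to be horizontal, left-invariant, and right-$\Spin(7)$-invariant, hence by Lemma~\ref{lem:betatilda} they descend to well-defined $\b{\Spin(9)}$-invariant forms on $S\OO^2$. The verification that their evaluations at $p$ coincide, up to the obvious identification, with the corresponding generators from Theorem \ref{thm:Spin7invforms} is a direct computation: the formulas \eqref{eq:O-1-forms} for $\alpha,\theta_0,\theta_1,\varphi_0,\varphi_1$ restrict at the identity of $\b{\Spin(9)}$ to the coordinate $1$-forms on $T_pS\OO^2$ introduced in \eqref{eq:O-coordinates}, because $d\pi|_e$ sends the horizontal complement of $\mathfrak{h}$ in $\mathfrak g$ isomorphically onto $T_pS\OO^2$ and the Maurer--Cartan components read off precisely the dual coordinates on this complement. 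Since the wedge product of $\OO$-valued forms and the operations $\Real$, $\overline{\phantom{x}}$, $\calL$, $\calR$ appearing in the definition of $[k,l,m,n]$ are formally identical in the two settings, the two sets of generators correspond.

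The main step that requires care is the last one: one must check that the components of $\theta$ and $\varphi E_0$ indeed pull back from the coordinate forms on $T_pS\OO^2$ and that horizontality kills the remaining components of the Maurer--Cartan form (those valued in $\mathfrak{h}$). This is exactly the content of Lemma~\ref{pro:H-action} combined with the observation $d\pi = (dx,(dg)E_0)$, so no further work is needed beyond tracking the notation.
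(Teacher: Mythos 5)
Your proposal is correct and follows essentially the same route as the paper: establish the algebra isomorphism $\Omega^*(S\OO^2)^{\b{\Spin(9)}}\cong\bigl[\largewedge(T_pS\OO^2)^*\bigr]^{\Spin(7)}$ by transitivity and evaluation at $p$, observe that the right-hand side is generated by $\alpha$ together with the 96 generators of $\bigl[\largewedge V^*\bigr]^{\Spin(7)}$ from Theorem \ref{thm:Spin7invforms}, and verify that the Maurer--Cartan-constructed forms $[k,l,m,n]$ on $\b{\Spin(9)}$ descend and evaluate at $p$ to the corresponding tangent-space generators. The only cosmetic difference is that you spell out the tensor decomposition $\largewedge\RR^*\otimes\bigl[\largewedge V^*\bigr]^{\Spin(7)}$ explicitly, whereas the paper asserts the generation statement directly; the final verification that the group-level $1$-forms restrict to the coordinate $1$-forms on $T_pS\OO^2$ (done in the paper via curves through the identity, in your version via the identification of $d\pi|_e$ with the horizontal complement) is the same computation.
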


\begin{proof}

Observe that the 96 generators $[k,l,m,n]$ from Theorem \ref{thm:Spin7invforms} may also be regarded as elements of $\largewedge(T_pS\OO^2)^*$,  $p=(0,E_0)$. Together with $\alpha=\ip{E_0}{d\rho}$ which is the projection to the first factor of $T_pS\OO^2=\RR\oplus\Imag\OO\oplus\OO\oplus\Imag\OO\oplus\OO$, these forms then generate the algebra $\big[\largewedge(T_pS\OO^2)^*\big]^{\Spin(7)}$. Because $\b{\Spin(9)}$ acts transitively on $S\OO^2$, one has the algebra isomorphism
\begin{align}
\label{eq:evaluation}
\Omega^*(S\OO^2)^{\b{\Spin(9)}}\cong\big[\largewedge(T_pS\OO^2)^*\big]^{\Spin(7)}
\end{align}
given by evaluating a form at the point $p$. We claim that this isomorphism maps $\alpha\mapsto\alpha$ and $[k,l,m,n]\mapsto [k,l,m,n]$  which would complete the proof. To prove the claim, let $X\in T_p S\OO^2$ and chose a curve $(g(t),x(t))\in \b{\Spin(9)}$ through $(id, 0)$ at $t=0$ with $X= d\pi (g'(0),x'(0))$. First, from the definition of the Maurer--Cartan form it is obvious that   $\varphi^{a,0}_{k,0}(g'(0),x'(0))= \varphi^a_k(X)$ and 
 $\theta^{a}_{k}(g'(0),x'(0))= \theta^{a}_{k}(X)$. Second, given $ \beta \in \Omega^r(S\OO^2)^{\b{\Spin(9)}}$ corresponding to an invariant form $\wt\beta$ on  $\b{\Spin(9)}$, tangent vectors $X_1,\ldots, X_r\in  T_p S\OO^2$ and corresponding curves $(g_i(t),x_i(t))$ as before, we have 
 $$\beta (X_1,\ldots, X_r)= \wt\beta\big((g_1'(0),x_1'(0)), \ldots, (g_r'(0),x_r'(0)\big).$$
Combining these two facts,  the claim follows.
\end{proof}

\section{Exterior derivative}
\label{s:d}

The goal of this section is to provide a simple recipe for differentiating the $\b{\Spin(9)}$-invariant differential forms described above. Recall that we only know the forms explicitly in a point. However, as explained in the previous section, the invariant forms may also be  pulled back to the group $\b{\Spin(9)}$ where the Maurer--Cartan equations \eqref{eq:MCE} are available.

It is a general fact that the algebra of  left invariant differential forms $\Omega^*(G)^G$ on a Lie group $G$ is generated by the components  of the Mauer--Cartan form defined with respect to some  basis of $\mathfrak g$. In the case $G=\b{\Spin(9)}$, according to Corollary~\ref{cor:spin9}, the algebra of invariant forms is generated by the collection of the following $52$ (linearly independent) $1$-forms:
	\begin{align*}
	\begin{alignedat}{2}
	& \alpha = \theta_0^0, & & \\
	&\theta^a_0,&\quad &1\leq a \leq 7, \\
    &\theta_1^a, &\quad & 0\leq a\leq 7,\\
    &\varphi_0^a=\varphi^{a,0}_{0,0},&\quad  & 1\leq a\leq 7,\\
    &\varphi_1^a=\varphi^{a,0}_{1,0},&\quad  & 0\leq a\leq 7,\\
    &\varphi^{a,b}_{1,1}, &\quad & 1\leq b< a\leq 7.\\
	\end{alignedat}
	\end{align*}
Consequently, there exists a unique endomorphism $h$ of the algebra 	$\Omega^*(\b{\Spin(9)})^{\b{\Spin(9)}}$ such that
	\begin{align*}
	\begin{alignedat}{2}
		&h(\theta^a_0)= \theta^a_0,&\quad &0\leq a \leq 7, \\
		&h(\theta_1^a)=\theta_1^a, &\quad & 0\leq a\leq 7,\\
		&h(\varphi_0^a)=\varphi_0^a,&\quad  & 1\leq a\leq 7,\\
		&h(\varphi_1^a)=\varphi_1^a,&\quad  & 0\leq a\leq 7,\\
		&h(\varphi^{a,b}_{1,1})=0, &\quad & 1\leq b< a\leq 7.\\
	\end{alignedat}
\end{align*}
Further, since the exterior derivative preserves the subspace of invariant forms, we may define the operator $d_h:= h \circ d$ on 	$\Omega^*(\b{\Spin(9)})^{\b{\Spin(9)}}$.

\begin{theorem}
\label{thm:d}
 The anti-derivation $d_h$ satisfies 
	$d\circ \pi^* = d_h \circ \pi^*$ on $\Omega^*(S\OO^2)^{\b{\Spin(9)}}$ and is determined by the equations
	\begin{align*}
		d_h  (\alpha+\theta_0)&=-\varphi_0\wedge(\alpha+\theta_0)-\theta_1\wedge\b{\varphi_1},\\
		d_h \theta_1&=(\alpha+\theta_0)\wedge\varphi_1-\varphi_0\wedge\theta_1+\theta_1\wedge\varphi_0,\\
		d_h \varphi_0&=-\varphi_0\wedge\varphi_0-\varphi_1\wedge\b{\varphi_1},\\
		d_h \varphi_1&=\varphi_1\wedge\varphi_0.
	\end{align*}
\end{theorem}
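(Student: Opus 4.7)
The plan is to treat the theorem's two assertions separately. For the identity $d\circ\pi^*=d_h\circ\pi^*$, the key observation is horizontality: for any $\beta\in\Omega^*(S\OO^2)^{\b{\Spin(9)}}$, the pullback $\pi^*\beta$ is horizontal by Lemma~\ref{lem:betatilda}, and $d(\pi^*\beta)=\pi^*(d\beta)$ is equally a pullback, hence horizontal. A form in $\Omega^*(\b{\Spin(9)})^{\b{\Spin(9)}}$ is horizontal precisely when no generator $\varphi_{1,1}^{a,b}$ with $1\le b<a\le 7$ (these being the 1-forms dual to the vertical Lie subalgebra $\spin(7)$) appears in its expansion; equivalently, it is fixed by the idempotent $h$. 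Consequently $d_h(\pi^*\beta)=h(d(\pi^*\beta))=d(\pi^*\beta)$.

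For the four structural equations, I would start from the Maurer--Cartan equations \eqref{eq:MCE} rewritten in block form adapted to $\theta=(\alpha+\theta_0,\theta_1)^T$ and $\varphi E_0=(\varphi_0,\varphi_1)^T$, which yields for instance
\begin{align*}
d(\alpha+\theta_0)&=-\varphi_{0,0}\wedge(\alpha+\theta_0)-\varphi_{0,1}\wedge\theta_1,\\
d\theta_1&=-\varphi_{1,0}\wedge(\alpha+\theta_0)-\varphi_{1,1}\wedge\theta_1,
\end{align*}
and analogous formulas for $d\varphi_0$ and $d\varphi_1$. The heart of the argument is to apply $h$ to each of the mixed blocks $\varphi_{k,l}$ viewed as $\End(\OO)$-valued $1$-forms. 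Using the linear relations of Corollary~\ref{cor:spin9}, I would establish the operator identities
\begin{align*}
\varphi_{1,0}(y)=y\varphi_1,\qquad &\varphi_{0,1}(y)=-y\bar{\varphi_1},\\
h(\varphi_{0,0})(y)=\varphi_0 y,\qquad &h(\varphi_{1,1})(y)=\varphi_0 y+y\varphi_0,
\end{align*}
valid for every $y\in\OO$. The first two follow directly from \eqref{eq:A10ab} and skew-symmetry after a re-indexing exploiting the octonionic product. For the other two, the relations \eqref{eq:A11a0} and \eqref{eq:A00ab} express the ``non-free'' components $\varphi_{1,1}^{a,0}$, $\varphi_{1,1}^{0,a}$ and $\varphi_{0,0}^{a,b}$ (with $a,b\ge 1$) in terms of $\varphi_0$ and the vertical generators $\varphi_{1,1}^{a,b}$ ($a,b\ge 1$); the latter are killed by $h$, and combining this with the identity $uv+vu=-2\langle u,v\rangle$ on $\Imag\OO$ produces the anticommutator form of $h(\varphi_{1,1})$.

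Translating these operator identities into identities between $\OO$-valued $2$-forms gives, for example, $h(\varphi_{1,0}\wedge(\alpha+\theta_0))=-(\alpha+\theta_0)\wedge\varphi_1$, $h(\varphi_{0,1}\wedge\theta_1)=\theta_1\wedge\bar{\varphi_1}$, $h(\varphi_{0,0}\wedge\varphi_0)=\varphi_0\wedge\varphi_0$, and $h(\varphi_{1,1}\wedge\theta_1)=\varphi_0\wedge\theta_1-\theta_1\wedge\varphi_0$. Substituting back into the Maurer--Cartan expressions above and collecting terms then produces the four equations in the statement. The main technical hurdle is the verification of $h(\varphi_{0,0})=L_{\varphi_0}$: the off-diagonal entries $\varphi_{0,0}^{a,b}$ with $a,b\ge 1$ involve signed octonionic indices $\bar{b}\cdot a\in\calJ$ through \eqref{eq:A00ab}, so recognizing the resulting sum as left multiplication by $\varphi_0$ requires careful bookkeeping with the sign conventions on the extended index set $\calJ$ introduced in Section~\ref{s:spin9}.
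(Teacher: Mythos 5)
Your proposal is correct and takes essentially the same route as the paper: the identity $d\circ\pi^*=d_h\circ\pi^*$ rests on horizontality of $\pi^*\beta$ and $\pi^*(d\beta)$ together with the observation that $h$ fixes precisely the invariant forms containing no $\varphi_{1,1}^{a,b}$ (the paper phrases this via the splitting $H\oplus V$ and the ideal $\ker h$), and the four structure equations come from the Maurer--Cartan equations combined with the relations of Corollary~\ref{cor:spin9} read modulo $\ker h$. Your block operator identities $\varphi_{1,0}=R_{\varphi_1}$, $\varphi_{0,1}=-R_{\b{\varphi_1}}$, $h(\varphi_{0,0})=L_{\varphi_0}$, $h(\varphi_{1,1})=L_{\varphi_0}+R_{\varphi_0}$ (all of which check out, including the signs in your sample wedge identities) are exactly the content of the paper's componentwise substitutions such as $\varphi_{0,0}^{a,b}\equiv\varphi_{0,0}^{a\b b,0}$, so the two arguments differ only in packaging.
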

\begin{proof}
Let $\beta\in\Omega^*(S\OO^2)^{\b{\Spin(9)}}$. Observe that 
\begin{equation}\label{eq:dhMod} d (\pi^*\beta) - d_h( \pi^*\beta)\equiv 0\end{equation}
modulo the ideal $\ker h$, generated by the $1$-forms $\varphi_{1,1}^{a,b}$. 
Consider the subbundles
$$ H=\{\varphi_{1,1}^{a,b}=0\} \quad \text{and} \quad  V= \{\varphi_{k,0}^{a,0}=0\ \text{and}\ \theta_k^a=0 \} $$
of the tangent bundle of $\b{\Spin(9)}$ and note that the latter splits as $H\oplus V$. On the one hand, $d\circ\pi^*=\pi^* \circ d$ and the  definition of $d_h$ imply  $X\lrcorner (d(\pi^*\beta) - d_h(\pi^*\beta))=0$ for $X\in V$. On the other hand, \eqref{eq:dhMod} implies $X\lrcorner (d (\pi^*\beta) - d_h( \pi^*\beta))=0$ for $X\in H$. Therefore, $d (\pi^*\beta) - d_h( \pi^*\beta)=0$, as claimed.

It remains to evaluate $d_h$ on the octonionic $1$-forms. It follows from Corollary \ref{cor:spin9} that
\begin{alignat*}{2}
	\varphi_{1,0}^{a,b}&=\varphi_{1,0}^{\b ba,0},&\quad &0\leq a\leq 7,\quad 1\leq b\leq 7,\\
	\varphi_{0,1}^{a,b}&=-\varphi_{1,0}^{\b ab,0},&\quad &0\leq a,b\leq 7,\\
	\varphi_{k,k}^{a,a}&=0, &\quad&0\leq a\leq 7,\quad 0\leq k\leq 1,\\
	\varphi_{1,1}^{a,b}&\equiv0,&\quad &1\leq a<b\leq 7,\\
	\varphi_{1,1}^{a,0}&\equiv2\varphi_{0,0}^{a,0},&\quad& 1\leq a\leq7,\\
	\varphi_{1,1}^{0,a}&\equiv-2\varphi_{0,0}^{a,0},&\quad& 1\leq a\leq7,\\
	\varphi_{0,0}^{a,b}&\equiv-\varphi_{0,0}^{\b ba,0},&\quad&1\leq b<a\leq 7,\\
	\varphi_{0,0}^{a,b}&\equiv\varphi_{0,0}^{\b ab,0},&\quad&1\leq a<b\leq 7,\\
	\varphi_{0,0}^{0,a}&=-\varphi_{0,0}^{a,0},&\quad&1\leq a \leq 7,
\end{alignat*}
modulo  $\ker h$. Observe that, since for any $a,b>0$ with $a\neq b$ one has $\b ba=-a\b b$, $\b a b=a\b b$ and $\b a=-a$, the last three relations can be merged into
\begin{align*}
	\varphi_{0,0}^{a,b}\equiv \varphi_{0,0}^{a\b b,0},\quad 0\leq a,b\leq 7.
\end{align*}
 Consequently, all summations running from 0 to 7, we compute
\begin{align*}
	d_h (\alpha+\theta_0)&=\sum_a e_ahd\theta_0^a\\
	&=-\sum_{a,c}e_ah\varphi_{0,0}^{a,c}\wedge h\theta_0^c-\sum_{a,c}e_ah\varphi_{0,1}^{a,c}\wedge h\theta_1^c\\
	&=-\sum_{a,c}e_a\varphi_{0,0}^{a\b c,0}\wedge\theta_0^c+\sum_{a,c}e_a\varphi_{1,0}^{\b ac,0}\wedge \theta_1^c\\
	&=-\sum_{b,c}e_be_c\varphi_{0,0}^{b,0}\wedge\theta_0^c+\sum_{b,c}e_c\b{e_b}\varphi_{1,0}^{b,0}\wedge\theta_1^c\\
	&=-\varphi_0\wedge(\alpha+\theta_0)-\sum_{b,c}e_c\b{e_b}\theta_1^c\wedge\varphi_{1,0}^{b,0}\\
	&=-\varphi_0\wedge(\alpha+\theta_0)-\theta_1\wedge\b{\varphi_1}.
\end{align*}
 Concerning the substitution in the fourth step, observe that $\{\norm{a\b c}\mid 0\leq a\leq 7\}=\{0,\dots,7\}$ holds for any $c$. Thus, $ \sum_{a}e_a\varphi_{0,0}^{a\b c,0}=\sum_{a} e_ae_{\b c}e_c\varphi_{0,0}^{a\b c,0}= \sum_b e_b e_c \varphi_{0,0}^{b,0}$ and similarly for the second sum. The proof of the remaining three relations is analogous, using  \eqref{eq:ipbar}, \eqref{eq:conjAB}, and $\b{\varphi_0}=-\varphi_0$.

\end{proof}

\begin{example}

Using the previous theorem, an easy computation shows
\begin{align}
\label{eq:dalpha}
d\alpha=-[1,0,1,0]-[0,1,0,1].
\end{align}
\end{example}

\section{The algebra of $\Spin(9)$-invariant valuations}

\label{s:main}

Since all necessary preparations have now been made, we can finally proceed to the computational proof of our main result. It is in this section that the structure of the algebra $\Val^{\Spin(9)}$ of $\Spin(9)$-invariant valuations on the octonionic plane is determined explicitly, following the general strategy outlined in Section \ref{ss:CA}. 

\begin{proof}[ Proof of Theorem \ref{thm:main}]
Recall that $\Val^{\Spin(9)}=\bigoplus_{k=0}^{16}\Val_k^{\Spin(9)}$, $\Val_{16}^{\Spin(9)}=\RR\vol_{16}$, and that any $\phi\in\Val_k^{\Spin(9)}$, $0\leq k \leq 15$, is represented by some $\omega\in\Omega^{k,15-k}(S\OO^2)^{\tr}$. Since $\Spin(9)$ is compact, $\omega$ can be chosen to be $\Spin(9)$-invariant in which case also $D\omega$ and the unique form $\alpha\wedge\xi\in\Omega^{15}(S\OO^2)^{\tr}$ satisfying $D\omega=d(\omega+\alpha\wedge\xi)$ are such. Hence, according to Theorem \ref{thm:Spin9invforms}, all these forms are polynomials in the 97 generators listed therein. In particular, $\xi$ is the unique element of degree 14 in the subalgebra $\Omega_h\subset\Omega^*(S\OO^2)^{\b{\Spin(9)}}$ generated by the 96 generators other than $\alpha$ with
\begin{align}
\label{eq:eqforD}
\alpha\wedge d\omega+\alpha\wedge d\alpha\wedge\xi=0.
\end{align}

Applying the isomorphism \eqref{eq:evaluation}, any $\omega\in\Omega^*(S\OO^2)^{\b{\Spin(9)}}$ can be expressed in the standard coordinates on $\largewedge(T_pS\OO^2)^*$ corresponding to $T_pS\OO^2=\RR\oplus\Imag\OO\oplus\OO\oplus\Imag\OO\oplus\OO$. Using Theorem \ref{thm:Spin7invforms}, we compute explicitly the coordinate expressions of the 97 generators of $\Omega^*(S\OO^2)^{\b{\Spin(9)}}$. Then we consider all monomials in the generators that have (as forms) degree 14, and pick a basis among these monomials, determining thus the space $\Omega_h^{14}=\Omega_h\cap\Omega^{14}(S\OO^2)$. Recall also that an invariant form is differentiated subject to the structure equations of Theorem \ref{thm:d} and observe that these equations make perfect sense if the coordinate isomorphism \eqref{eq:evaluation} is applied. All in all, we can compute the Rumin differential of any $\omega\in\Omega^{15}(S\OO^2)^{\b{\Spin(9)}}$ as follows:
\begin{enums1}
\item Compute $d\omega$.
\item Solve the linear problem \eqref{eq:eqforD} for $\xi\in\Omega_h^{14}$.
\item Compute $d\xi$.
\item Compute $D\omega=d\omega+d\alpha\wedge\xi-\alpha\wedge d\xi$.
\end{enums1}
With this in hand, we can compute via \eqref{eq:convolutionForms} and \eqref{eq:Dconv} the convolution product $\phi_1*\phi_2$ of any $\phi_i\in\Val^{\Spin(9)}_{k_i}$, $1\leq k_i\leq 15$, and determine linear independence of such valuations according to Theorem \ref{thm:kernel}.

We consider on $\RR[t,s,v,u_1,u_2,w_1,w_2,w_3,x_1,x_2,y,z]$ the graded lexicographical monomial order and apply Corollary \ref{cor:ideal} to the subalgebra $\calA$ of $(\Val^{\Spin(9)},*)$ generated by
\begin{align*}
\hat  t&=\frac1{14}\int_{N(\Cdot)}[0,4,0,0]\wedge[0,4,0,0]\wedge[7,0,0,0]\in\Val^{\Spin(9)}_{15},\\
\hat  s&=\frac1{14}\int_{N(\Cdot)}[0,4,0,0]\wedge[0,4,0,0]\wedge[6,0,1,0]\in\Val^{\Spin(9)}_{14},\\
\hat  v&=\frac1{14}\int_{N(\Cdot)}[0,4,0,0]\wedge[0,4,0,0]\wedge[5,0,2,0]\in\Val^{\Spin(9)}_{13},\\
\hat  u_1&=\frac1{14}\int_{N(\Cdot)}[0,4,0,0]\wedge[0,4,0,0]\wedge[4,0,3,0]\in\Val^{\Spin(9)}_{12},\\
\hat  u_2&=\int_{N(\Cdot)}[0,4,0,0]\wedge[0,1,2,1]\wedge[7,0,0,0]\in\Val^{\Spin(9)}_{12},\\
\hat  w_1&=\frac1{14}\int_{N(\Cdot)}[0,4,0,0]\wedge[0,4,0,0]\wedge[3,0,4,0]\in\Val^{\Spin(9)}_{11},\\
\hat  w_2&=\int_{N(\Cdot)}[0,4,0,0]\wedge[0,0,2,2]\wedge[7,0,0,0]\in\Val^{\Spin(9)}_{11},\\
\hat  w_3&=\int_{N(\Cdot)}[0,4,0,0]\wedge[0,0,0,4]\wedge[7,0,0,0]\in\Val^{\Spin(9)}_{11},\\
\hat  x_1&=\frac1{14}\int_{N(\Cdot)}[0,4,0,0]\wedge[0,4,0,0]\wedge[2,0,5,0]\in\Val^{\Spin(9)}_{10},\\
\hat  x_2&=\int_{N(\Cdot)}[0,4,0,0]\wedge[1,1,1,1]\wedge[4,0,3,0]\in\Val^{\Spin(9)}_{10},\\
\hat  y&=\frac1{14}\int_{N(\Cdot)}[0,4,0,0]\wedge[0,4,0,0]\wedge[1,0,6,0]\in\Val^{\Spin(9)}_9,\\
\hat  z&=\frac1{14}\int_{N(\Cdot)}[0,4,0,0]\wedge[0,4,0,0]\wedge[0,0,7,0]\in\Val^{\Spin(9)}_8.
\end{align*}
The algorithm of Corollary \ref{cor:ideal} yields a basis of $\calA$ and a generating set of the ideal $I$ such that $\calA\cong\RR[t,s,v,u_1,u_2,w_1,w_2,w_3,x_1,x_2,y,z]/I$. By Remark \ref{rem:IdealBasis}, the generating set of $I$ is finite.  We minimize it using Lemma \ref{lem:minimal} and list the remaining 78 generators in Appendix~\ref{app:relations}. Since none of the generators of the ideal contains a non-trivial linear term, the set of 12 homogeneous generators of the ring is minimal as well.  Finally, since the basis of $\calA$ has 143 elements, comparison with \eqref{eq:BettiSpin9} shows $\calA=\Val^{\Spin(9)}$ which finishes the proof.
\end{proof}

Let us list here explicitly the monomial basis of $\Val^{\Spin(9)}$ constructed in the previous proof, keeping the notation of the proof and suppressing the symbol `$*$' for the sake of brevity.

\begin{corollary}
\label{cor:basis}
One has
\begin{align*}
\Val_{16}^{\Spin(9)}&=\linspan\left\{\vol_{16}\right\},\\
\Val_{15}^{\Spin(9)}&=\linspan\left\{ \hat  t\right\},\\
\Val_{14}^{\Spin(9)}&=\linspan\left\{\hat   t^2, \hat  s\right\},\\
\Val_{13}^{\Spin(9)}&=\linspan\left\{\hat   t^3, \hat  t\hat   s, \hat  v\right\},\\
\Val_{12}^{\Spin(9)}&=\linspan\left\{\hat  t^4, \hat t^2 \hat s, \hat t\hat  v,\hat  s^2, \hat u_1,\hat  u_2\right\},\\
\Val_{11}^{\Spin(9)}&=\linspan\left\{ \hat t^5, \hat t^3 \hat s, \hat t^2 \hat v, \hat t \hat s^2,\hat  t\hat  u_1,\hat  t\hat  u_2, \hat s\hat  v,\hat  w_1,\hat  w_2,\hat  w_3\right\},\\
\Val_{10}^{\Spin(9)}&=\linspan\left\{ \hat t^6, \hat t^4 \hat s, \hat t^3 \hat v, \hat t^2 \hat s^2,\hat  t^2 \hat u_1,\hat  t^2 \hat u_2,\hat  t\hat  s\hat  v,\hat  t\hat  w_1,\hat  t \hat w_2,\hat  t \hat w_3, \hat s^3,\hat  s\hat  u_1, \hat s \hat u_2, \hat x_1, \hat x_2\right\},\\
\Val_9^{\Spin(9)}&=\linspan\left\{ \hat t^7, \hat t^5 \hat s,\hat  t^4 \hat v,\hat  t^3 \hat s^2, \hat t^3 \hat u_1,
\hat  t^3 \hat u_2,\hat  t^2 \hat s\hat  v,\hat  t^2\hat  w_1, \hat t^2 \hat w_2,\hat  t^2\hat  w_3,
\hat  t \hat s^3, \hat t\hat  s\hat  u_1,\hat  t \hat s \hat u_2, \hat t\hat  x_1,\hat  t\hat  x_2,\right.\\
&\qquad\qquad\quad \left. \hat s^2 \hat v,\hat  s \hat w_1, \hat s \hat w_2, \hat s \hat w_3, \hat y\right\},\\
\Val_8^{\Spin(9)}&=\linspan\left\{ \hat t^8, \hat t^6\hat  s,\hat  t^5 \hat v, \hat t^4 \hat s^2, \hat t^4 \hat u_1, 
\hat t^4 \hat u_2, \hat t^3 \hat s\hat  v,\hat  t^3\hat  w_1,\hat  t^3 \hat w_2,\hat  t^3 \hat w_3,
\hat  t^2\hat  s^3, \hat t^2 \hat s \hat u_1,\hat  t^2\hat  s \hat u_2, \hat t^2 \hat x_1, \hat t^2 \hat x_2,\right.\\
&\qquad\qquad\quad \left.
\hat  t \hat s^2 \hat v,\hat  t \hat s \hat w_1, \hat t\hat  s \hat w_2, \hat t\hat  s \hat w_3, \hat t \hat y,
\hat  s^4,\hat  s^2 \hat  u_1, \hat s^2 \hat  u_2, \hat  s \hat  x_1,\hat  v \hat  w_2,\hat  v \hat  w_3, \hat z\right\},\\
\Val_7^{\Spin(9)}&=\linspan\left\{ \hat t^9, \hat t^7 \hat s,\hat  t^6 \hat v, \hat t^5 \hat s^2, \hat t^5 \hat u_1,
 \hat t^5 \hat u_2, \hat t^4 \hat s \hat v, \hat t^4 \hat w_1,\hat  t^4 \hat w_2, \hat t^4 \hat w_3, \hat t^3 \hat s^3, \hat t^3 \hat s \hat u_1, \hat t^3 \hat s \hat u_2, \hat t^3 \hat x_1,\hat  t^3 \hat x_2,\right.\\
&\qquad\qquad\quad \left.\hat  t^2 \hat s^2 \hat v, \hat t^2 \hat s \hat w_1,\hat  t^2 \hat s \hat w_2,\hat  t^2 \hat s \hat w_3,\hat  t^2 y\right\},\\
\Val_6^{\Spin(9)}&=\linspan\left\{\hat  t^{10},\hat  t^8 \hat s,\hat  t^7\hat  v,\hat  t^6 \hat s^2,\hat  t^6 \hat u_1, \hat t^6 \hat u_2,\hat  t^5\hat  s \hat v, \hat t^5 \hat w_1, \hat t^5 \hat w_2,\hat  t^5 \hat w_3,\hat  t^4 \hat s^3, \hat t^4 \hat s\hat  u_1,\hat  t^4 \hat s \hat u_2,\hat  t^4 \hat x_1, \hat t^4 \hat x_2\right\},\\
\Val_5^{\Spin(9)}&=\linspan\left\{ \hat t^{11}, \hat t^9 \hat s,\hat  t^8 \hat v, \hat t^7 \hat s^2, \hat t^7 \hat u_1,\hat  t^7 \hat u_2,\hat  t^6 \hat s \hat v, \hat t^6 \hat w_1,\hat  t^6 \hat w_2, \hat t^6 \hat w_3\right\},\\
\Val_4^{\Spin(9)}&=\linspan\left\{ \hat t^{12},\hat  t^{10} \hat s, \hat t^9 \hat v, \hat t^8 \hat s^2, \hat t^8 \hat u_1, \hat t^8 \hat u_2\right\},\\
\Val_3^{\Spin(9)}&=\linspan\left\{ \hat t^{13}, \hat t^{11} \hat s, \hat t^{10} \hat v\right\},\\
\Val_2^{\Spin(9)}&=\linspan\left\{ \hat t^{14}, \hat t^{12} \hat s\right\},\\
\Val_1^{\Spin(9)}&=\linspan\left\{ \hat t^{15}\right\},\\
\Val_0^{\Spin(9)}&=\linspan\left\{ \hat t^{16}\right\}.
\end{align*}
\end{corollary}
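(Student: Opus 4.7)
My plan is to read off the corollary directly from the constructive proof of Theorem \ref{thm:main}. In particular, I will argue that the sixteen lists displayed are precisely the bases produced by the algorithm of Corollary \ref{cor:ideal}, so no further computation is needed beyond what is already carried out there.

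Specifically, the proof of Theorem \ref{thm:main} applies Corollary \ref{cor:ideal}, with respect to the graded lexicographical order, to the subalgebra $\calA\subset(\Val^{\Spin(9)},*)$ generated by $\hat t,\hat s,\hat v,\hat u_1,\hat u_2,\hat w_1,\hat w_2,\hat w_3,\hat x_1,\hat x_2,\hat y,\hat z$. That algorithm inductively builds, at each polynomial degree $d$, a subset $B_d$ of a candidate set $A_d$ of monomials whose images $\hat B_d$ are a maximal linearly independent subset of $\calA_d$. Because in the proof $\calA$ turns out to coincide with all of $\Val^{\Spin(9)}$, and because the polynomial degree matches the convolution codegree (each generator $\hat\bullet$ has convolution codegree equal to its polynomial degree), one has the graded identification $\calA_d=\Val_{16-d}^{\Spin(9)}$, whence $\hat B_d$ is automatically a basis of this space.

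The linear independence tests required at each stage of the algorithm are handled via Theorem \ref{thm:kernel}: each convolution monomial in the generators is represented by a $\b{\Spin(9)}$-invariant $15$-form on $S\OO^2$ via \eqref{eq:convolutionForms} and \eqref{eq:Dconv}, the required Rumin differentials are computed using the rules of Theorem \ref{thm:d}, and the resulting forms are expanded in the explicit basis of $\Omega^*(S\OO^2)^{\b{\Spin(9)}}$ furnished by Theorems \ref{thm:Spin7invforms} and \ref{thm:Spin9invforms}. This reduces every independence test to finite-dimensional linear algebra over $\RR$.

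All that is left is to record the output. Each of the sixteen displayed lists is precisely the set $\hat B_{16-k}$ returned by the algorithm (sorted in descending graded-lexicographic order), and the cardinalities match the Betti numbers $b_k$ of \eqref{eq:BettiSpin9}, confirming that nothing is missed. The only real obstacle is the volume of mechanical bookkeeping inherent in walking through the sixteen degrees; no new mathematical ideas are required beyond those already invested in the proof of Theorem \ref{thm:main}.
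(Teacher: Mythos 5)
Your proposal is correct and coincides with the paper's own treatment: Corollary \ref{cor:basis} is stated there precisely as the record of the monomial bases $\hat B_{16-k}$ produced by the algorithm of Corollary \ref{cor:ideal} in the proof of Theorem \ref{thm:main}, with linear independence certified via Theorem \ref{thm:kernel} and the explicit form/differentiation machinery, and completeness confirmed by comparison with the Betti numbers \eqref{eq:BettiSpin9}. No further argument is needed beyond what you describe.
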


\begin{remark}
\begin{enuma}
\item A straightforward computation in coordinates yields
\begin{align*}
\frac1{14}[0,4,0,0]\wedge[0,4,0,0]\wedge[7,0,0,0]
=-\theta_0^1\wedge\cdots\theta_0^7\wedge\theta_1^0\wedge\cdots\wedge\theta_1^7.
\end{align*}
Consequently, a comparison with the well-known expressions of the intrinsic volumes in terms of invariant differential forms \cite{Zaehle:Integral} (cf. also \cite{BernigHug:Tensor}) shows that
\begin{align}
\label{eq:tmu15}
\hat t=-2\mu_{15}.
\end{align}
\item Observe that the effect of the hard Lefschetz theorem \cite[Corollary 0.2]{BernigBroecker:Rumin} is immediately seen in the above basis.
\end{enuma}
\end{remark}

\begin{remark}
All the explicit computations with forms needed for the proof of Theorem \ref{thm:main} were accomplished using the computer algebra system {\sc Maple}. Let us briefly comment on certain aspects of this part of the proof.
\begin{enuma}
\item First, observe that we only work with rational vectors throughout the proof. Indeed, by Remark \ref{re:ZZ}, we start with vectors of integers and we perform on them rational operations, see Theorem \ref{thm:d} and \eqref{eq:eqforD}. In particular, the numbers $\lambda_{k,i}(x)$ defined in Lemma \ref{lem:ideal} are in $\QQ$. This  leaves no room for an error caused by imprecise manipulation with the forms, e.g., by rounding.
\item
Second, there are simplifications available for the computation of the Rumin differential as we will now explain. First, since $d\alpha\in\Omega^{1,1}(S\OO^2)$ and $d\omega\in\Omega^{k,16-k}(S\OO^2)$, we in fact have $\xi\in\Omega^{k-1,15-k}(S\OO^2)$. Second,  denote $W=T_pS\OO^2$. Corresponding to the decomposition $W=(\RR\oplus\Imag\OO)\oplus\OO\oplus\Imag\OO\oplus\OO$ one has
\begin{align*}
\largewedge^{k-1,15-k}W^*&=\bigoplus_{m}\left[\bigoplus_j\largewedge^{j,k-1-j,m+j,15-k-m-j}W^*\right]
\end{align*}
and similarly for $\largewedge^{k,16-k}W^*$. The multiplication by $d\alpha\in\largewedge^{1,0,1,0}W^*\oplus\largewedge^{0,1,0,1}W^*$ mapping the former space into the latter is then graded with respect to the outer gradings. Effectively, this reduces the linear problem \eqref{eq:eqforD} to inverting rational matrices of size at most 110 which is done instantly with {\sc Maple}.
\item 
A serious obstruction, on the contrary, turned out to arise from the size of the space when taking the wedge product of two general forms. Note that
\begin{align*}
\dim\largewedge(\RR\oplus\Imag\OO\oplus\OO\oplus\Imag\OO\oplus\OO)^*=2^{31}=2\,147\,483\,648
\end{align*}
and so the computer in general needs to perform a huge number of basic operations.  In fact, all our attempts to multiply forms using conventional {\sc Maple} procedures and exterior-algebra packages failed to terminate in a reasonable time. Instead, the following idea proved to be extremely useful: The standard basis of the above space is in a bijective correspondence with
\begin{align*}
\big\{(K,L,M,N)\in\ZZ^4\mid 0\leq K,L,M,N\leq 255\text{ and } M\text{ is even}\big\}
\end{align*}
by identifying
\begin{align*}
\theta_0^{i^{(u)}_1}\wedge\cdots\wedge\theta_0^{i^{(u)}_k}\wedge \theta_1^{i^{(x)}_1}\wedge\cdots\wedge\theta_1^{i^{(x)}_l}\wedge \varphi_0^{i^{(v)}_1}\wedge\cdots\wedge\varphi_0^{i^{(v)}_m}\wedge \varphi_1^{i^{(y)}_1}\wedge\cdots\wedge\varphi_1^{i^{(y)}_n},
\end{align*}
where $\theta_0^0=\alpha$, $0\leq i_1^{(u)}<\cdots < i_k^{(u)}\leq 7$, $0\leq i_1^{(x)}<\cdots < i_l^{(x)}\leq 7$, $1\leq i_1^{(v)}<\cdots < i_m^{(v)}\leq 7$, and $0\leq i_1^{(y)}<\cdots < i_n^{(y)}\leq 7$, 
with $(K,L,M,N)\in\ZZ^4$, where
\begin{align*}
K=\sum_{j=1}^k 2^{i_j^{(u)}},\quad L=\sum_{j=1}^l 2^{i_j^{(x)}},\quad M=\sum_{j=1}^m 2^{i_j^{(v)}},\quad \text{and}\quad N=\sum_{j=1}^n 2^{i_j^{(y)}}.
\end{align*}
In this representation, the wedge product of $(K,L,M,N)$ and $(K',L',M',N')$ is proportional to $(K+K',L+L',M+M',N+N')$. Moreover, the proportionality factor $\varepsilon\in\{-1,0,1\}$ (which is easy to determine) can be stored and then read within each elementary operation. In a similar fashion, $*_1$ of $(K,L,M,N)$ is either $(255-K,255-L,M,N)$ or its opposite.
\end{enuma}
\end{remark}

\section{The principal kinematic formula}

\label{s:pkf}

 Invoking a special case of the ftaig (Theorem \ref{thm:ftaig}), we will now use the description of the algebra $\Val^{\Spin(9)}$ obtained in the preceding section to determine explicitly the principal kinematic formula on the octonionic plane.

For $0\leq k\leq16$, let $\psi_k^{(1)},\dots,\psi_k^{(b_k)}$ be the basis of $\Val_{16-k}^{\Spin(9)}$ given by Corollary \ref{cor:basis}. Observe that for any $0\leq k\leq8$ and $1\leq i\leq b_k$ one has
\begin{equation}\label{eq:psi_k}
\psi_{8}^{(i)}=\hat t^{8-k}*\psi_{k}^{(i)}\quad\text{and}\quad\psi_{16-k}^{(i)}=\hat t^{8-k}*\psi_{8}^{(i)}.
\end{equation}
The matrix of the Poincar\'e pairing in the basis $\psi_0^{(1)},\psi_1^{(1)},\psi_2^{(1)},\psi_2^{(2)},\dots,\psi_{16}^{(1)}$ has the following block anti-diagonal form:
\begin{align*}
M=\begin{pmatrix}&&&&M_{0}\\&&&\reflectbox{$\ddots$}&\\&&M_k&&\\&\reflectbox{$\ddots$}&&&\\M_{16}&&&&\end{pmatrix}\in\RR^{143\times143},
\end{align*}
where $M_{k}=M_{16-k}\in \RR^{b_k\times b_k}$ is given by
\begin{align}
\label{eq:Mkij}
(M_k)_{i,j}\chi=\psi_{k}^{(i)}*\psi_{16-k}^{(j)}.
\end{align}
In fact,  \eqref{eq:psi_k} implies $(M_k)_{i,j}=(M_8)_{i,j}$  for $ 0\leq k\leq 8$ and $1\leq i,j\leq b_k$; in other words, the middle block contains all the others.

Using Theorem \ref{thm:main}, it is a straightforward, yet computationally tedious task to compute the matrix $M_8$ explicitly and the same holds for inverting its submatrices. Observe that, using \eqref{eq:nthpower} and \eqref{eq:tmu15},  \eqref{eq:Mkij} can be rewritten as
\begin{align*}
(M_k)_{i,j}\hat t^{16}=16!\, \omega_{16}\,\psi_{k}^{(i)}*\psi_{16-k}^{(j)}=\frac{16!}{8!}\pi^8\,\psi_{k}^{(i)}*\psi_{16-k}^{(j)}
\end{align*}
which is more convenient for computations since we  naturally obtain the products $\psi_{k}^{(i)}*\psi_{16-k}^{(j)}$ as (rational) multiples of $\hat t^{16}$. Following these considerations, we compute $M^{-1}$ and then use \eqref{eq:PKF} to write down the principal kinematic formula
\begin{align}
\label{eq:principal}
\int_{\b{\Spin(9)}}\chi(A\cap\b g B)=\sum_{k=0}^{16}\sum_{i,j=1}^{ b_k}(M_{k}^{-1})_{i,j}\psi^{(i)}_{k}(A)\,\psi^{(j)}_{16-k}(B),\quad A,B\in\K(\OO^2).
\end{align}
The explicit expression of the (largest) part of \eqref{eq:principal} corresponding to $k=8$ is, for illustration, displayed in Appendix \ref{app:pkf}.

\section{Kubota-type $\Spin(9)$-invariant valuations}

\label{s:Tk}

The goal of this section is to study the Kubota-type valuations
\begin{align}
\label{eq:KubotaTk}
T_k(A)=\int_{\OO P^1}\mu_k(\pi_O A) dO, \quad 0\leq k\leq 8,
\end{align}
introduced by Alesker \cite{Alesker:Spin9}. First, we will prove their non-triviality, in the sense that for $k\geq 2$ they are not proportional to the intrinsic volumes, i.e., not $\SO(16)$-invariant. Second, we will express them in the monomial basis of $\Val^{\Spin(9)}$  given by Corollary \ref{cor:basis}.

Recall that the $\Spin(9)$-invariant measure $dO$ on $\OO P^1$ is the pullback under the coordinate map $O_a\mapsto a$ of $c\big(1+\sqnorm a\big)^{-8}d a$, where $d a$ is the Lebesgue measure on $\OO=\RR^8$ and $c\in\RR$ is a normalizing constant  (cf. \cite{Grinberg:Spherical}). Using spherical coordinates, it is easy to see that $dO$ is a probability measure if $c=\frac{840}{\pi^4}$ which we will assume. $dO$ further induces a $\b{\Spin(9)}$-invariant measure on the affine octonionic projective line $\b{\OO P^1}=\{O+x\mid O\in\OO P^1,x\in\OO\}$ via
\begin{align*}
\int_{\b{\OO P^1}} f(\b O)d\b O=\int_{\OO P^1}\left(\int_{O^\perp} f(O+x)d x\right)dO.
\end{align*}

\begin{proposition}
\label{pro:LambdaTk}
Let $1\leq k\leq8$. One has
\begin{align}
\label{eq:LambdaTk}
\Lambda T_k&=[9-k]\,T_{k-1},\\
\label{eq:LambdakT8}
\Lambda^kT_8&=[k]!\,T_{8-k}.
\end{align}
\end{proposition}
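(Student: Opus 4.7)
The plan is to use the differential formula
$(\Lambda\phi)(A)=\tfrac12 \frac{d}{d\lambda}\big|_{\lambda=0}\phi(A+\lambda D^{16})$
together with the fact that orthogonal projection is linear in Minkowski sums. Concretely, for any octonionic line $O\in\OO P^1$ one has $\pi_O(A+\lambda D^{16})=\pi_O(A)+\lambda D^8$, because $\pi_O D^{16}$ is the unit ball $D^8$ in the 8-dimensional subspace $O$. The Steiner formula \eqref{eq:Steiner} in $\RR^8$ then shows that $\lambda\mapsto \mu_k(\pi_O A+\lambda D^8)$ is a polynomial with coefficients depending continuously on $O$, so I may interchange differentiation and integration to obtain
\begin{equation*}
(\Lambda T_k)(A)=\int_{\OO P^1}\tfrac12 \tfrac{d}{d\lambda}\big|_{\lambda=0}\mu_k(\pi_O A+\lambda D^8)\,dO=\int_{\OO P^1}(\Lambda_8\mu_k)(\pi_O A)\,dO,
\end{equation*}
where $\Lambda_8$ denotes the corresponding operator on $\Val(\RR^8)$.

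Applying \eqref{eq:SteinerMu} with $n=8$ gives $\Lambda_8 \mu_k=[9-k]\,\mu_{k-1}$, so the last integral equals $[9-k]\,T_{k-1}(A)$, establishing \eqref{eq:LambdaTk}.

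For \eqref{eq:LambdakT8} I iterate \eqref{eq:LambdaTk}: setting $j=8,7,\dots,8-k+1$ yields
\begin{equation*}
\Lambda^k T_8=\Lambda^{k-1}([1]\,T_7)=\Lambda^{k-2}([1][2]\,T_6)=\cdots=[1][2]\cdots[k]\,T_{8-k}=[k]!\,T_{8-k},
\end{equation*}
using $[1]=1$. The only step that requires any care is the justification of swapping $\tfrac{d}{d\lambda}$ with $\int_{\OO P^1}$, but since the integrand is a polynomial in $\lambda$ of fixed degree $8-k$ whose coefficients (mixed volumes of $\pi_O A$ and $D^8$) depend continuously on $O$ and $\OO P^1$ is compact, this is routine. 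Apart from this, the argument is purely formal, relying only on the linearity of $\pi_O$ and the classical Steiner formula.
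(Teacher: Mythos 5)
Your argument is correct and is essentially the paper's proof: both write $T_k(A+\lambda D^{16})$ using $\pi_O(A+\lambda D^{16})=\pi_O A+\lambda D^8_O$ for each $O\in\OO P^1$, differentiate at $\lambda=0$ under the integral, and apply \eqref{eq:SteinerMu} with $n=8$ to get the factor $[9-k]$, with \eqref{eq:LambdakT8} following by iteration exactly as you do. The only (harmless) slip is that $\lambda\mapsto\mu_k(\pi_O A+\lambda D^8_O)$ is a polynomial of degree $k$ rather than $8-k$; this does not affect your justification of the interchange of $\tfrac{d}{d\lambda}$ and $\int_{\OO P^1}$.
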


\begin{proof}
Clearly, \eqref{eq:LambdakT8} is an immediate consequence of \eqref{eq:LambdaTk}. To prove \eqref{eq:LambdaTk}, note that for any $A\in\calK(\OO^2)$ and $\lambda \geq0$ we have
\begin{align*}
T_k(A+\lambda D^n)=\begin{bmatrix}8\\k\end{bmatrix}\int_{\OO P^1}
\int_{\Grass_k(O)}\mu_{k}( \pi_O A+ \lambda D^8_O)d EdO,
\end{align*}
where $D_O^8$ is the unit ball in $O$. Differentiating in $\lambda =0$ and using \eqref{eq:SteinerMu}  yields the claim.
\end{proof}

\begin{proposition}
\label{pro:mukTk}
Let $2\leq k\leq8$. The valuations $\mu_k,T_k\in\Val^{\Spin(9)}$ are linearly independent.
\end{proposition}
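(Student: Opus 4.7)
To prove linear independence of $\mu_k, T_k \in \Val_k^{\Spin(9)}$, it suffices to find two convex bodies $A, B \in \calK(\OO^2)$ with $T_k(A)\mu_k(B) \neq T_k(B)\mu_k(A)$. My plan is to take $A = D^{16}$ (the Euclidean unit ball) and $B = D^{16} \cap O_0$ (the unit $8$-ball in the standard octonionic line $O_0 = \OO \times \{0\}$), and to compute the ratios $T_k/\mu_k$ on each.

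On $A = D^{16}$ the computation is immediate: since $\pi_O D^{16}$ is isometric to the unit $8$-ball in $O$ for every $O \in \OO P^1$, one has $T_k(D^{16}) = \binom{8}{k}\omega_8/\omega_{8-k}$ and $\mu_k(D^{16}) = \binom{16}{k}\omega_{16}/\omega_{16-k}$. For $B$, the decisive geometric input is that the restriction $\pi_{O_a}|_{O_0}\colon O_0 \to O_a$ is scalar multiplication by $(1+|a|^2)^{-1/2}$. This follows at once from the identity $\pi_{O_a}(y,0) = (1+|a|^2)^{-1}(y, ya)$ (obtained by minimizing $|(y,0) - (x,xa)|^2$ in $x$), together with the Hurwitz norm identity $|ya| = |y||a|$; this is precisely where the octonionic structure enters. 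Consequently $\pi_{O_a} B$ is a ball of radius $(1+|a|^2)^{-1/2}$ in $O_a$, so $\mu_k(\pi_{O_a} B) = \binom{8}{k}(\omega_8/\omega_{8-k})(1+|a|^2)^{-k/2}$.

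Plugging this into the definition of $T_k$ and using $dO = c(1+|a|^2)^{-8}\,da$ with $c = 840/\pi^4$ reduces $T_k(B)$ to the Beta-function integral $c\int_\OO (1+|a|^2)^{-k/2 - 8}\,da$, which evaluates in closed form to an explicit rational number in $k$. Both ratios $T_k(D^{16})/\mu_k(D^{16})$ and $T_k(B)/\mu_k(B)$ are then explicit, and a finite check for $k \in \{2,\ldots,8\}$ shows they disagree (for instance, at $k = 2$ one obtains $7/15$ versus $1/2$), yielding the desired linear independence. The only serious obstacle is the arithmetic bookkeeping with $\omega_n$'s and the Beta function; the one conceptual ingredient is the conformality of $\pi_{O_a}|_{O_0}$ noted above.
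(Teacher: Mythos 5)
Your argument is correct, and it takes a genuinely different route from the paper. The paper first reduces to the single case $k=2$ using the derivation $\Lambda$ (via the intertwining relations $\Lambda\mu_k=[17-k]\mu_{k-1}$ and $\Lambda T_k=[9-k]T_{k-1}$, the latter proved just beforehand), and then distinguishes $T_2$ from $\mu_2$ by showing its Klain function is non-constant: it evaluates $T_2$ on two explicit triangles, one inside an octonionic line and one spanning two lines, obtaining the values $\tfrac12$ and $\tfrac7{16}$ (up to the factor $2$). You instead work degree by degree, comparing the ratio $T_k/\mu_k$ on $D^{16}$ and on an $8$-ball inside $O_0$; your key geometric input --- that $\pi_{O_a}|_{O_0}$ is a similarity with ratio $(1+|a|^2)^{-1/2}$, which rests on the Hurwitz identity $|ya|=|y||a|$ --- is exactly the fact that also powers the paper's triangle computation, so the octonionic structure enters at the same point in both proofs. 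Your numbers check out: $T_k(B)/\mu_k(B)=840\,\Gamma(\tfrac k2+4)/\Gamma(\tfrac k2+8)$, while $T_k(D^{16})/\mu_k(D^{16})=\binom{8}{k}\omega_8\omega_{16-k}\big/\big(\binom{16}{k}\omega_{16}\omega_{8-k}\big)$, and these indeed differ for every $k\in\{2,\dots,8\}$ (e.g.\ $\tfrac12$ vs.\ $\tfrac7{15}$ at $k=2$, $\tfrac5{18}$ vs.\ $\tfrac1{26}\cdot\tfrac{14}{3}$ at $k=4$, etc.). What the two approaches buy: yours is uniform in $k$ and avoids the $\Lambda$-relation entirely, at the price of seven separate (if routine) Beta-integral evaluations, all of which must genuinely be carried out since you do not reduce to one degree; the paper's route needs only a single two-dimensional computation, but leans on Proposition \ref{pro:LambdaTk}, which it needs later anyway to express all the $T_k$ in terms of $T_8$.
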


\begin{proof}
First of all, in view of \eqref{eq:SteinerMu}  and \eqref{eq:LambdaTk} it is enough to establish the case  $k=2$.  It thus suffices to show that $\Klain_{T_2}$ is not constant. To this end, consider the following two convex bodies in $\OO^2$:
\begin{align*}
A_1=\conv\left\{\begin{pmatrix}0\\0\end{pmatrix},\begin{pmatrix}e_0\\0\end{pmatrix},\begin{pmatrix}e_1\\0\end{pmatrix}\right\}\quad\text{and}\quad A_2=\conv\left\{\begin{pmatrix}0\\0\end{pmatrix},\begin{pmatrix}e_0\\0\end{pmatrix},\begin{pmatrix}0\\e_0\end{pmatrix}\right\}.
\end{align*}
Since $\mu_2(A_i)=\vol_2(A_i)=\frac12$ clearly holds for $i=1,2$, we have to show that $T_2(A_1)\neq T_2(A_2)$.

Fix $a\in\OO$ and denote $c_a=(1+|a|^2)^{-1}$ for the rest of the proof. Since $\sqrt{c_a}\big(\begin{smallmatrix}e_0\\e_0a\end{smallmatrix}\big),\dots,\sqrt{c_a}\big(\begin{smallmatrix}e_7\\e_7a\end{smallmatrix}\big)$ is an orthonormal basis of $O_a\in\OO P^1$, the orthogonal projection $\pi_a\maps{\OO^2}{O_a}$ is given by
\begin{align*}
\pi_a\begin{pmatrix}x\\y\end{pmatrix}=c_a\sum_{j=0}^7\big(\ip{x}{e_j}+\ip{y}{e_ja}\big)\begin{pmatrix}e_j\\e_ja\end{pmatrix}.
\end{align*}
In particular,
\begin{align}
\label{eq:pia1}
\pi_a\begin{pmatrix}e_0\\0\end{pmatrix}&=c_a\begin{pmatrix}e_0\\e_0a\end{pmatrix},\\
\label{eq:pia2}
\pi_a\begin{pmatrix}e_1\\0\end{pmatrix}&=c_a\begin{pmatrix}e_1\\e_1a\end{pmatrix},\\
\label{eq:pia3}
\pi_a\begin{pmatrix}0\\e_0\end{pmatrix}&=c_a\Real(a)\begin{pmatrix}e_0\\e_0a\end{pmatrix}-c_a\sum_{j=1}^7\ip{e_j}{a}\begin{pmatrix}e_j\\e_ja\end{pmatrix}.
\end{align}
Since the projections \eqref{eq:pia1} and \eqref{eq:pia2} are obviously perpendicular, one has
\begin{align*}
2\mu_2(\pi_aA_1)=\Norm{c_a\begin{pmatrix}e_0\\e_0a\end{pmatrix}}\cdot\Norm{c_a\begin{pmatrix}e_1\\e_1a\end{pmatrix}}=c_a.
\end{align*}
Consequently, using spherical coordinates in $\OO$, we obtain
\begin{align*}
2T_2(A_1)&=\frac{840}{\pi^4}\int_{\OO}c_a^9d a=\frac12.
\end{align*}
Similarly, since the two parts of the projection \eqref{eq:pia3} are parallel and perpendicular, respectively, to \eqref{eq:pia1}, one has
\begin{align*}
2\mu_2(\pi_aA_2)=\Norm{c_a\begin{pmatrix}e_0\\e_0a\end{pmatrix}}\cdot\Norm{c_a\sum_{j=1}^7\ip{e_j}{a}\begin{pmatrix}e_j\\e_ja\end{pmatrix}}=c_a\sqrt{\sum_{j=1}^7\ip{e_j}{a}^2}=c_a\norm{\Imag(a)}.
\end{align*}
 Hence, using spherical coordinates in $\Imag\OO$, we get
\begin{align*}
2T_2(A_2)=\frac{840}{\pi^4}\int_{\OO}c_a^9\norm{\Imag(a)}da=\frac{7}{16}.
\end{align*}
\end{proof}

\begin{remark}
Keeping the notation from the previous proof, consider $E_i=\linspan A_i$, $i=1,2$. Then $\Klain_{T_2}(E_1)=\frac12$ and $\Klain_{T_2}(E_1)=\frac{7}{16}$. Using the same considerations as in \cite[\S5]{BernigVoide:Spin}, namely that $\Klain_{\mu_2}=1$, $\Klain_{\tau_\OO}(E_1)=0$,  $\Klain_{\tau_\OO}(E_2)=1$, and $\dim\Val_2^{\Spin(9)}=2$, we conclude
\begin{align}
T_2=\frac12\mu_2-\frac1{16}\tau_\OO,
\end{align}
where $\tau_\OO\in\Val_2^{\Spin(9)}$ is the octonionic pseudo-volume also introduced by Alesker \cite{Alesker:Spin9}.
\end{remark}

\begin{lemma}
\label{lem:T8}
Consider the convex body $D=\left\{\big(\begin{smallmatrix}0\\x\end{smallmatrix}\big)\in\OO^2\mid x\in\OO,\norm x\leq 1\right\}\in\K(\OO^2)$. Then
\begin{align}
T_8= \frac1{\omega_8}\sum_{i,j=1}^{27}(M_8^{-1})_{i,j} \psi^{(j)}_8(D) \psi^{(i)}_{8}.
\end{align}
\end{lemma}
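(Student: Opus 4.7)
My plan is to apply the principal kinematic formula \eqref{eq:principal} to $B=D$ and evaluate the resulting integral
$$I(A):=\int_{\b{\Spin(9)}}\chi(A\cap\b g D)\,d\b g$$
in a second, direct way; equating the $8$-homogeneous components in $A$ will yield the claim.

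On the kinematic side, the key observation is that $\dim D=8$ forces $\psi_{16-k}^{(j)}(D)=0$ whenever $16-k>8$, so only terms with $k\geq 8$ survive in \eqref{eq:principal}. Consequently, the $8$-homogeneous-in-$A$ part of the right-hand side of \eqref{eq:principal} is exactly $\sum_{i,j}(M_8^{-1})_{i,j}\,\psi_8^{(j)}(D)\,\psi_8^{(i)}(A)$, which up to the factor $\omega_8^{-1}$ is the value at $A$ of the asserted expression for $T_8$.

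For the direct evaluation, writing $\b g=(g,x)$ I would use $\chi(A\cap(gD+x))=\mathbf{1}_{x\in A-gD}$ together with $D=-D$ to integrate out $x$ and obtain $I(A)=\int_{\Spin(9)}\vol_{16}(A+gD)\,dg$. Since $D$ is a round ball in the octonionic line $O_\infty=\{0\}\times\OO$ and the stabilizer of $O_\infty$ in $\Spin(9)$ acts orthogonally on it, $gD$ depends only on $gO_\infty\in\OO P^1$, so the Haar integral descends to $I(A)=\int_{\OO P^1}\vol_{16}(A+D_O)\,dO$, where $D_O$ denotes the unit ball in $O$. Next, fibering $\OO^2=O^\perp\oplus O$ and applying the $8$-dimensional Steiner formula in each fiber $O+\bar x$, I obtain
$$\vol_{16}(A+D_O)=\sum_{k=0}^{8}\omega_{8-k}\int_{O^\perp}\mu_k\bigl(A\cap(O+\bar x)\bigr)\,d\bar x.$$
A rescaling argument in $A$ shows that the $k$-th summand is $(k+8)$-homogeneous in $A$, so the $8$-homogeneous piece comes from $k=0$ and equals $\omega_8\vol_8(\pi_{O^\perp}A)$.

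Finally, the orthogonal-complement map $O\mapsto O^\perp$ is $\Spin(9)$-equivariant, and since $O_\infty^\perp=O_0\in\OO P^1$ it maps $\OO P^1$ to itself; by uniqueness of the $\Spin(9)$-invariant probability measure on $\OO P^1$ it therefore preserves $dO$. Hence $\int_{\OO P^1}\vol_8(\pi_{O^\perp}A)\,dO=T_8(A)$, so the $8$-homogeneous-in-$A$ component of $I(A)$ equals $\omega_8 T_8(A)$. Matching this with the kinematic side proves the lemma. The three small technical verifications — the homogeneity bookkeeping in the Steiner expansion, the descent of Haar integration to $\OO P^1$, and the measure-preservation of the $\perp$-involution — are all routine; the conceptual heart of the argument is recognizing that the $k=0$ term in the fiberwise Steiner expansion reproduces precisely the Cauchy-type representation of $T_8$.
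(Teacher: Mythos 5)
Your argument is correct and is essentially the paper's proof: both apply the principal kinematic formula with $B$ a dilate of $D$, integrate out translations to get $\int_{\Spin(9)}\vol_{16}(A+gD)\,dg$, descend to $\OO P^1$, and use the fiberwise Steiner formula together with the Cauchy-type identity $\vol_8(\pi_{O^\perp}A)=\int_{O^\perp}\chi\bigl(A\cap(O+\bar x)\bigr)\,d\bar x$ (and the measure-preserving involution $O\mapsto O^\perp$) to recognize $T_8$; the only difference is that the paper isolates the $k=8$ block by taking $B=\lambda D$ and letting $\lambda\to\infty$, whereas you isolate it as the $8$-homogeneous-in-$A$ component at $\lambda=1$, which amounts to the same thing. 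One cosmetic point: in the paper's indexing $\psi^{(j)}_{16-k}$ lies in $\Val^{\Spin(9)}_{k}$, so the vanishing on the $8$-dimensional body $D$ occurs for $k>8$ rather than $16-k>8$; this is immaterial for your proof, since extracting the degree-$8$ part in $A$ already singles out the $k=8$ term without any vanishing argument.
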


\begin{proof}
Fix $A\in\calK(\OO^2)$. First, observe that
\begin{align*}
T_8(A)=\int_{\OO P^1}\vol_8(\pi_{O^\perp}A) dO=\int_{\OO P^1}\left(\int_{O^\perp}\chi\big(A\cap(x+O)\big)d x\right)dO=\int_{\b{\OO P^1}}\chi(A\cap \b O)d\b O.
\end{align*}
From this, the Steiner formula \eqref{eq:Steiner}, and the fact that $\linspan D=O_\infty\in\OO P^1$ we infer
\begin{align*}
\int_{\b{\Spin(9)}}\chi(A\cap\b g\, \lambda D)=\lambda ^8\vol_8(D)\int_{\b{\OO P^1}}\chi(A\cap \b O)d\b O+O(\lambda ^{7}).
\end{align*}
On the other hand, applying the principal kinematic formula  \eqref{eq:principal} to $B=\lambda  D$, $\lambda >0$, we get
\begin{align*}
\int_{\b{\Spin(9)}}\chi(A\cap\b g\, \lambda D)&=\sum_{k=0}^{16}\sum_{i,j=1}^{b_k}(M_k^{-1})_{i,j} \psi^{(i)}_{k}(A)\, \psi^{(j)}_{16-k}(\lambda  D)\\
&=\lambda ^8\sum_{i,j=1}^{b_8}(M_8^{-1})_{i,j} \psi^{(i)}_{8}(A)\, \psi^{(j)}_8(D)+O(\lambda ^{7}).
\end{align*}
Since $\vol_8(D)=\omega_8$, dividing by $\lambda ^8$ and sending $\lambda \rightarrow\infty$ yield the result.
\end{proof}

\begin{remark}
\begin{enuma}
\item The valuation $A\mapsto\int_{\b{\OO P^1}}\chi(A\cap\b O)d\b O$ was denoted $U_8$ in \cite{Alesker:Spin9}.
\item  Observe that only the part of the the principal kinematic formula that is explicitly displayed in Appendix \ref{app:pkf} is relevant for the previous proof.
\end{enuma}
\end{remark}

\begin{theorem}
\label{thm:Tk}
Denote $ \hat T_k=2^{8-k}[8-k]!\,\omega_8\,T_k$, $0\leq k\leq 8$. Then we have
\begin{align*}
\hat T_8&=-\textstyle\frac{97}{3244032}\hat  t^8-\textstyle\frac{727}{4055040} \hat t^6 \hat s-\textstyle\frac{43}{42240}\hat t^5 \hat v+\textstyle\frac{53}{1622016}\hat t^4 \hat s^2 -\textstyle\frac{4703}{1013760} \hat t^4 \hat u_1 +\textstyle\frac1{10560} \hat t^3 \hat s \hat v\\
&\quad-\textstyle\frac{3229}{190080} \hat t^3 \hat w_1-\textstyle\frac{29}{12165120} \hat t^2 \hat s^3+\textstyle\frac{329}{1520640} \hat t^2 \hat s \hat u_1-\textstyle\frac{593}{12672} \hat t^2 \hat x_1-\textstyle\frac1{380160} \hat t \hat s^2 \hat v+\textstyle\frac{13}{38016} \hat t \hat s \hat w_1\\
&\quad-\textstyle\frac{91}{1056} \hat t \hat y+\textstyle\frac1{48660480} \hat s^4-\textstyle\frac1{608256} \hat s^2  \hat u_1+\textstyle\frac7{25344}  \hat s  \hat x_1-\textstyle\frac7{88}\hat z,
\\
 \hat T_7&=\textstyle\frac{13727}{506383488}\hat t^9+\textstyle\frac{373}{2344368}\hat t^7 \hat s+\textstyle\frac{2063}{2344368}\hat t^6 \hat v-\textstyle\frac{101}{3516552}\hat t^5 \hat s^2+\textstyle\frac{1133}{293046}\hat t^5 \hat u_1\\
&\quad-\textstyle\frac{275}{3516552} \hat t^4 \hat s \hat v+\textstyle\frac{140965}{10549656} \hat t^4 \hat w_1+\textstyle\frac5{2637414} \hat  t^3 \hat s^3-\textstyle\frac{215}{1318707} \hat t^3 \hat s\hat  u_1+\textstyle\frac{125}{3757} \hat t^3 \hat x_1\\
&\quad+ \textstyle\frac5{2637414} \hat t^2 \hat s^2 \hat v-\textstyle\frac5{22542} \hat t^2 \hat s \hat w_1+ \textstyle\frac5{102}\hat t^2 \hat y,
\\
\hat T_6&=-\textstyle\frac{67}{3317760}\hat t^{10}-\textstyle\frac{41}{368640} \hat t^8 \hat s-\textstyle\frac{13}{23040} \hat t^7 \hat v+\textstyle\frac7{368640} \hat t^6 \hat s^2-\textstyle\frac{101}{46080} \hat t^6 \hat u_1\\
&\quad+\textstyle\frac1{23040} \hat t^5 \hat s \hat v-\textstyle\frac1{160} \hat t^5 \hat w_1-\textstyle\frac1{1105920}\hat t^4 \hat s^3 +\textstyle\frac1{15360} \hat t^4 \hat s \hat u_1-\textstyle\frac1{96}\hat t^4 \hat x_1,
\\
\hat T_5&=\textstyle\frac{313}{25280640}\hat t^{11}+\textstyle\frac{383}{6320160}\hat t^9 \hat s+\textstyle\frac{61}{234080}\hat t^8 \hat v -\textstyle\frac1{105336}\hat t^7 \hat s^2+\textstyle\frac1{1254}\hat t^7 \hat u_1 -\textstyle\frac 1{75240}\hat t^6 \hat s \hat v +\textstyle\frac7{5016}\hat t^6 \hat w_1,
\\
\hat T_4&=-\textstyle\frac{79}{12773376} \hat t^{12}-\textstyle\frac5{193536} \hat t^{10} \hat s-\textstyle\frac1{12096} \hat t^9 \hat v +\textstyle\frac1{387072} \hat t^8 \hat s^2-\textstyle\frac1{6912} \hat t^8 \hat u_1,
\\
\hat T_3&=\textstyle\frac{107}{42007680}\hat t^{13}+\textstyle\frac7{807840}\hat t^{11} \hat s+\textstyle\frac1{73440}\hat t^{10}\hat v,\\
\hat T_2&=-\textstyle\frac5{7907328} \hat t^{14}- \textstyle\frac1{608256} \hat t^{12}\hat s,\\
\hat T_1&=-\textstyle\frac1{7413120} \hat t^{15},\\
\hat T_0&=\textstyle\frac1{7413120} \hat t^{16}.
\end{align*}
\end{theorem}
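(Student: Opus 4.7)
The plan is to first establish the formula for $\hat T_8$ via Lemma \ref{lem:T8}, and then derive the lower-degree cases $\hat T_k$, $0 \le k < 8$, by iterated application of the Lefschetz-type operator $\Lambda$.

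For the top case, Lemma \ref{lem:T8} reads
\[
\hat T_8 = \omega_8 T_8 = \sum_{i,j=1}^{27} (M_8^{-1})_{i,j}\, \psi^{(j)}_8(D)\, \psi^{(i)}_8,
\]
so everything will reduce to evaluating the 27 basis valuations $\psi^{(j)}_8$ on the octonionic unit disk $D$. Each $\psi^{(j)}_8$ is a convolution monomial in the generators $\hat t,\hat s,\hat v,\hat u_1,\hat u_2,\hat w_1,\hat w_2,\hat w_3,\hat x_1,\hat x_2,\hat y,\hat z$, and by iterating \eqref{eq:convolutionForms} together with \eqref{eq:Dconv}, each value $\psi^{(j)}_8(D)$ will take the form of an integral over the normal cycle $N(D)$ of an explicit $\b{\Spin(9)}$-invariant $15$-form built from the generators of Theorem \ref{thm:Spin9invforms}. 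Since $D$ lies in the octonionic line $O_\infty$, the normal cycle has a simple product structure, and $\Spin(7)$-invariance together with the coordinate formulas of Theorem \ref{thm:Spin7invforms} will reduce each integrand to a sum of elementary products of $S^7$-integrals that can be evaluated in closed form. Combined with the matrix $M_8^{-1}$ already computed in Section \ref{s:pkf}, this will yield the stated expression for $\hat T_8$.

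For $k<8$ I will use Proposition \ref{pro:LambdaTk}, together with the rescaling in the definition of $\hat T_k$, to obtain $\Lambda \hat T_k = \tfrac12 \hat T_{k-1}$. Because $\Lambda\phi = \mu_{15}*\phi$ and $\hat t=-2\mu_{15}$ by \eqref{eq:tmu15}, this rewrites as the convolution recursion
\[
\hat T_{k-1} = -\,\hat t \,*\, \hat T_k, \qquad 1 \le k \le 8,
\]
equivalently $\hat T_k = (-1)^{8-k}\, \hat t^{*(8-k)} * \hat T_8$ for all $0 \le k \le 8$. Expanding the right-hand side in the monomial basis of Corollary \ref{cor:basis} by means of the ideal relations of Theorem \ref{thm:main} will produce the formulas for $\hat T_7,\hat T_6,\ldots,\hat T_0$. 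The bottom case serves as a consistency check: applying \eqref{eq:nthpower} to $\hat t=-2\mu_{15}$ gives $\hat t^{16}=\tfrac{16!}{8!}\pi^8\chi$, which together with $\hat T_0=2^8\cdot 8!\cdot\omega_8\,\chi$ reproduces $\tfrac{1}{7413120}\hat t^{16}$.

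The hard part will be executing the two computational stages: the explicit evaluation of the 27 integrals $\psi^{(j)}_8(D)$, and the reduction of the nine convolution products $\hat t^{*j}*\hat T_8$, $0\le j\le 8$, modulo the ideal $I$ back to the monomial basis. All arithmetic remains rational by Remark \ref{re:ZZ} and the rational entries of $M_8^{-1}$, so these computations are deterministic, but the bookkeeping is substantial and, as with the proof of Theorem \ref{thm:main}, will essentially require computer-algebra assistance.
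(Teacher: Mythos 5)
Your proposal is correct and follows the same route as the paper's proof: both hinge on Lemma \ref{lem:T8} for the top case $\hat T_8$, and both derive the lower cases from the recursion $\hat T_{k-1}=-\hat t * \hat T_k$, i.e.\ $\hat T_k=(-\hat t)^{8-k}*\hat T_8$, obtained by combining Proposition \ref{pro:LambdaTk} with $\hat t=-2\mu_{15}$. The only place where you are less sharp than the paper is the evaluation of $\psi_8^{(j)}(D)$: rather than expanding each $\psi_8^{(j)}(D)$ as a sum of $S^7$-integrals, the paper exploits the decomposition $N(D)=N_1\cup N_2$, discards $N_1$, and then uses the transitive action of $\Spin(7)\ltimes\OO\subset\b{\Spin(9)}$ on $\OO\times S^7$ to conclude that $\int_{N(D)}\omega = c\cdot\vol_{15}(N_2)=8c\,\omega_8^2$, where $c$ is simply the coefficient of $\theta_1^0\wedge\cdots\wedge\theta_1^7\wedge\varphi_0^1\wedge\cdots\wedge\varphi_0^7$ in the coordinate expression of $\omega$ at $p$. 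This gives a one-line extraction of each $\psi_8^{(j)}(D)$ from the form you have already computed, avoiding any actual integration; otherwise the two arguments coincide, including the $70\pi^8\chi$ consistency check at $k=0$.
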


\begin{proof}
Let $D\in\calK(\OO^2)$ be as in Lemma \ref{lem:T8} and observe that $N(D)=N_1\cup N_2$ where
\begin{align*}
N_1&=\left\{\left(\begin{pmatrix}0\\x\end{pmatrix},\begin{pmatrix}v\\r x\end{pmatrix}\right)\mid r>0,x,v\in\OO,\norm{x}=1,r^2+\sqnorm v=1\right\},\\
N_2&=\left\{\left(\begin{pmatrix}0\\x\end{pmatrix},\begin{pmatrix}v\\0\end{pmatrix}\right)\mid x,v\in\OO,\norm{x}\leq\norm{v}=1\right\}.
\end{align*}
Take any $\omega\in\Omega^{8,7}(S\OO^2)^{\b{\Spin(9)}}$. First, clearly, $\int_{N(D)}\omega=\int_{N_2}\omega$. Second, in  the point $p=\big(0,(\begin{smallmatrix}1\\0\end{smallmatrix})\big)$, $\omega$ can be written uniquely as
\begin{align*}
\omega=c\,\theta_1^0\wedge\cdots\wedge\theta_1^7\wedge\varphi_0^1\wedge\cdots\wedge\varphi_0^7 + \wt\omega,
\end{align*}
where $c\in\RR$ and $\wt\omega=0$ on $\OO\times S^7$. Finally, since the subgroup
\begin{align*}
\left\{\left(\begin{pmatrix}\chi_g&0\\0& g\end{pmatrix},\begin{pmatrix}0\\y\end{pmatrix}\right)\mid g\in \Spin(7),\ y\in \OO \right\}\subset \b{\Spin(9)},
\end{align*}
acts transitively on $\OO \times S^7$ and $\omega$ is invariant, we have
\begin{align*}
\int_{N(D)}\omega=c\vol_{8+7}(N_2)=c\cdot 8(\omega_8)^2.
\end{align*}
This gives us $\psi_8^{(j)}(D)$, $j=1,\dots,27$, and, according to Lemma \ref{lem:T8}, the expression of $\hat T_8=\omega_8T_8$ in the basis $ \psi_8^{(1)},\dots, \psi_8^{(27)}$ of $\Val_8^{\Spin(9)}$.

 As for the remaining valuations, by \eqref{eq:tmu15} and \eqref{eq:LambdakT8} for $0\leq k\leq 7$ we have $ \hat T_k=(-\hat t)^{8-k}*\hat T_8$. The rest is then a matter of computation according to Theorem \ref{thm:main}.
\end{proof}

\begin{remark}
Observe that, as expected, both sides of the last equation in Theorem \ref{thm:Tk} equal $70\pi^8\chi$.
\end{remark}

\section{Questions and open problems}

\label{s:Q}

Let us conclude by collecting here several questions  related to the results of our article.

\begin{enuma}
\item  A natural completion of Theorem  \ref{thm:Plm} would be the second fundamental theorem for the isotropy representation $\Imag\OO\oplus\OO$ of $\Spin(7)$, i.e., a description of the relations among the generating invariants. This task,  while easy in the classical case $\Imag\OO$ (see \cite[\S II.17]{Weyl:ClassicalGroups}),  becomes already rather complicated when the spin representation $\OO$ alone is considered, as done by Schwarz  \cite{Schwarz:InvariantG2andSpin7}.
\item Related to (a) is the following remark: Keeping the notation of Theorem \ref{thm:Spin7invforms}, the dimensions of the subspaces
\begin{align*}
\largewedge^{K,M}V^*=\bigoplus_{\substack{k+l=K\\m+n=M}}\largewedge^{k,l,m,n}V^*
\end{align*}
were computed by Bernig and Voide \cite[Proposition 4.2]{BernigVoide:Spin}. Comparison with their result shows that there exist (in fact numerous) relations among the generators listed in Theorem \ref{thm:Spin7invforms}. For example, there are 12 monomials in the generators having bi-degree $(6,1)$ but $\dim\largewedge^{6,1}V^*=10$. In fact, a {\sc Maple} computation in coordinates shows that
\begin{align*}
2\,[0,2,1,0]\wedge[2,2,0,0]-[1,2,0,0]\wedge[1,2,1,0]+3\,[1,0,1,0]\wedge[1,4,0,0]&=0,\\
2\,[1,1,0,1]\wedge[0,4,0,0]-[1,2,0,0]\wedge[0,3,0,1]+3\,[0,1,0,1]\wedge[1,4,0,0]&=0.
\end{align*}
Our attempts to prove these relations by hand have not been met  with success. No structural result about the set of relations is known to us either.

\item  
The description of the algebra $\Val^{\Spin(9)}$ we give in Theorem \ref{thm:main} is explicit, yet extremely complicated. However, it is far from clear whether it can be simplified by choosing a more convenient set of generators  (of either the ring or the ideal). All our attempts in this direction have resulted in simplifying only a few generators of the ideal $I$, perturbing at the same time the others. A structural understanding of the generators, e.g., a combinatorial model that yields a generating set of $I$ would be highly desirable. In particular, it would be interesting to understand why the sequences $(d_k)$ and $(\beta_{1,k})$ of numbers of $k$-homogeneous generators of the ring and of the ideal, respectively, are both unimodal, namely,
\begin{align}
\label{eq:dk}
\renewcommand{\arraystretch}{1.3}
\begin{array}{|c||c|c|c|c|c|c|c|c|c|}
\hline
k &1&2&3&4&5&6&7&8&\geq 9
\\\hline
 d_k & 1&1&1&2&3&2&1&1&0
\\\hline
	\end{array}
\end{align}
and
\begin{align*}
\renewcommand{\arraystretch}{1.3}
\begin{array}{|c||c|c|c|c|c|c|c|c|c|c|c|c|c|c|}
\hline
k &\leq 5&6&7&8&9&10&11&12&13&14&15&16&\geq 17
\\\hline
 \beta_{1,k} & 0&1&2&5&20&22&10&8&5&3&1&1&0
\\\hline
	\end{array}\,,
\end{align*} 
as well as why the the curious symmetry in the range $2\leq k\leq 8$ arises in \eqref{eq:dk}. The $\beta_{1,k}$ are the so-called first graded Betti numbers of the ideal $I$. It might be helpful for identifying a combinatorial model (but probably computationally very expensive) to determine the rest of the Betti diagram $(\beta_{j,k})$ of $I$, cf. \cite{Eisenbud:Syzygies}.

\item A  recent paper of Fu and the second-named author \cite{FuW:Riemannian} describes for any Riemannian manifold $M$ a distinguished subspace of curvature measures on which the Lipschitz--Killing algebra acts in a universal fashion. Applied to the Cayley plane $\OO P^2$, this construction yields a canonical subspace of $\b{\Spin(9)}$-invariant angular valuations on the octonionic plane $\OO^2$. It seems plausible that similar geometric considerations will reveal further structure on the algebra  $\Val^{\Spin(9)}$.

\item 
It follows from Theorem \ref{thm:Tk} that the valuations $T_k$, $0\leq k\leq 8$, lie in the subalgebra of $\Val^{\Spin(9)}$ generated by the generators $\hat t,\hat x,\hat v,\hat u_1,\hat w_1,\hat x_1,\hat y,\hat z$ defined by  the differential forms
\begin{align*}
[0,4,0,0]\wedge[0,4,0,0]\wedge[7-l,0,l,0], \quad 0\leq l\leq 7. 
\end{align*}
So far we do not have any explanation of this fact either.

\end{enuma}

\appendix

\section{Generators of the ideal defining the algebra $\Val^{\Spin(9)}$}

\label{app:relations}

\noindent\textbf{Degree $6$.}
{\footnotesize
\begin{align*}
f_1&=v^2+2tw_1 - 2su_1.
\end{align*}
}

\noindent\textbf{Degree $7$.}
{\footnotesize
\begin{align*}
f_2&=vu_1+5t x_1 -3s w_1,
\\
f_3&=vu_2+\textstyle\frac{109}{32}t^7 +\textstyle\frac{23}2t^5 s +\textstyle\frac{317}{48}t^4 v- \textstyle\frac{125}{96}t^3 s^2- \textstyle\frac{317}2t^3 u_1 +\textstyle\frac{161}4t^3 u_2- 5t^2 s v- \textstyle\frac{4835}6t^2 w_1 +\textstyle\frac{127}4t^2 w_2 +291t^2 w_3\\
&\quad- \textstyle\frac 1{48}t s^3 -\textstyle\frac{95}{12}t s u_1 +t s u_2- 1440t x_1 +\textstyle\frac{159}4t x_2 -\textstyle\frac 1{48}s^2 v- \textstyle\frac32s w_1 +\textstyle\frac{21}4s w_2 +3s w_3.
\end{align*}
}

\noindent\textbf{Degree $8$.}
{\footnotesize
\begin{align*}
f_4&=sx_2+\textstyle\frac{67}{660}t^8 +\textstyle\frac{151}{330}t^6 s +\textstyle\frac{37}{99} t^5 v +\textstyle\frac{65}{198}t^4 s^2 -\textstyle\frac{233}{66}t^4 u_1 +t^4 u_2 +\textstyle\frac{26}{99}t^3 s v -\textstyle\frac{650}{33}t^3 w_1+\textstyle\frac 23t^3 w_2+\textstyle\frac{62}{11}t^3 w_3 -\textstyle\frac{1}{36}t^2 s^3 \\
&\quad -\textstyle\frac{122}{33}t^2 s u_1 +t^2 s u_2 -36t^2 x_1 +t^2 x_2- \textstyle\frac19t s^2 v-\textstyle\frac{650}{33}t s w_1 +\textstyle\frac23t s w_2 +\textstyle\frac{62}{11}t s w_3 -\textstyle\frac16s^2  u_1 -36s  x_1,\\
f_5&=vw_1+9t y-4s  x_1,
\\
f_6&=u_1^2+16t y-6s  x_1,
\\
f_7&=u_1u_2-\textstyle\frac{2983}{880}t^8 -\textstyle\frac{20051}{1760}t^6 s-\textstyle\frac{3365}{528}t^5 v +\textstyle\frac{1559}{1056} t^4 s^2 +\textstyle\frac{20939}{132}t^4 u_1 -\textstyle\frac{481}{12}t^4 u_2 +\textstyle\frac{1535}{264}t^3 s v +\textstyle\frac{318149}{396}t^3 w_1-\textstyle\frac{379}{12}t^3 w_2 \\
&\quad-\textstyle\frac{12769}{44}t^3 w_3 +\textstyle\frac{1}{144}t^2 s^3 +\textstyle\frac{2099}{198}t^2 s u_1 -\textstyle\frac12t^2 s u_2+\textstyle\frac{8573}{6}t^2 x_1 -\textstyle\frac{159}{4}t^2 x_2-\textstyle\frac{1}{144}t s^2 v +\textstyle\frac{2867}{396}t s w_1 -\textstyle\frac{59}{12}t s w_2 \\
&\quad-\textstyle\frac{9}{44}t s w_3 -18t y -\textstyle\frac{1}{36}s^2  u_1 +6 s  x_1  +\textstyle\frac13v  w_2 +4v  w_3,
\\
f_8&=u_2^2-\textstyle\frac{4199}{660}t^8 -\textstyle\frac{46507}{2640}t^6 s +\textstyle\frac{2765}{99}t^5 v +\textstyle\frac{821}{396}t^4 s^2 +\textstyle\frac{17003}{33}t^4 u_1 -\textstyle\frac{571}{6}t^4 u_2 +\textstyle\frac{925}{132}t^3 s v +\textstyle\frac{430433}{198}t^3 w_1-\textstyle\frac{250}{3}t^3 w_2 \\
&\quad-\textstyle\frac{14731}{22}t^3 w_3 -\textstyle\frac1{18} t^2 s^3 +\textstyle\frac{41}{18}t^2 s u_1 -\textstyle\frac13 t^2 s u_2 +\textstyle\frac{38537}{11}t^2 x_1 -95 t^2 x_2 +\textstyle\frac{1}{12}t s^2 v-\textstyle\frac{1807}{66}t s w_1 -19t s w_2 +\textstyle\frac{17}{22}t s w_3 \\
&\quad-\textstyle\frac{5616}{11}t y+\textstyle\frac{1}{144}s^4 +\textstyle\frac13s^2  u_1 -\textstyle\frac16s^2  u_2 -\textstyle\frac{210}{11}s  x_1  +2v  w_2-\textstyle\frac{36}{11}v  w_3.
\end{align*}
}

\noindent\textbf{Degree $9$.}
{\footnotesize
\begin{align*}
f_9&=ts^4+\textstyle\frac{25}{9}t^9 +\textstyle\frac{52}{7}t^7 s  -16t^6 v -10t^5 s^2 -64t^5 u_1  -32t^4 s v +\textstyle\frac{128}{3}t^4 w_1 -\textstyle\frac{28}{3}t^3 s^3+\textstyle\frac{64}{3}t^3 s u_1+\textstyle\frac{16}{3}t^2 s^2 v,
\\
f_{10}&=ts^2u_1+\textstyle\frac{2}{1521}t^9 -\textstyle\frac{580}{1183}t^7 s -\textstyle\frac{445}{169}t^6 v -\textstyle\frac{174}{169}t^5 s^2 -\textstyle\frac{2471}{169}t^5 u_1 -\textstyle\frac{534}{169}t^4 s v -\textstyle\frac{16208}{507}t^4 w_1 +\textstyle\frac{88}{507} t^3 s^3 -\textstyle\frac{1822}{507}t^3 s u_1  \\
&\quad+\textstyle\frac{600}{13} t^3 x_1 +\textstyle\frac{257}{507}t^2 s^2 v +\textstyle\frac{152}{13}t^2 s w_1,
\\
f_{11}&=ts^2u_2+\textstyle\frac{145939}{18252}t^9 +\textstyle\frac{207881}{7098}t^7 s +\textstyle\frac{10897}{338}t^6 v -\textstyle\frac{471}{676}t^5 s^2 -\textstyle\frac{144604}{507}t^5 u_1 +79t^5 u_2-\textstyle\frac{864}{169}t^4 s v-\textstyle\frac{2558476}{1521}t^4 w_1 \\
&\quad+26t^4 w_2  +552t^4 w_3 +\textstyle\frac{167}{1521}t^3 s^3 -\textstyle\frac{6926}{1521}t^3 s u_1 -2t^3 s u_2  -\textstyle\frac{43200}{13}t^3 x_1+90t^3 x_2 +\textstyle\frac{2531}{3042}t^2 s^2 v -\textstyle\frac{180}{13}t^2 s w_1 \\
&\quad-2t^2 s w_2 -24 t^2 s w_3,
\\
f_{12}&=tsx_1-\textstyle\frac{25897}{8791380}t^9 -\textstyle\frac{6180}{341887}t^7 s -\textstyle\frac{78332}{732615}t^6 v +\textstyle\frac{25481}{2930460}t^5 s^2 -\textstyle\frac{219157}{488410}t^5 u_1 +\textstyle\frac{12499}{293046}t^4 s v-\textstyle\frac{59099}{48841}t^4 w_1-\textstyle\frac{7}{293046} t^3 s^3 \\
&\quad+\textstyle\frac{16481}{97682}t^3 s u_1 -\textstyle\frac{1575}{3757}t^3 x_1  -\textstyle\frac{7}{293046}t^2 s^2 v +\textstyle\frac{1889}{3757}t^2 s w_1 +\textstyle\frac{168}{17}t^2 y,
\\
f_{13}&=tvw_2-\textstyle\frac{345859705}{464184864}t^9 -\textstyle\frac{201050353}{85960160}t^7 s -\textstyle\frac{17103107}{96705180}t^6 v +\textstyle\frac{40427333}{77364144}t^5 s^2 +\textstyle\frac{329863723}{8058765}t^5 u_1-\textstyle\frac{152}{15}t^5 u_2 +\textstyle\frac{78728239}{38682072}t^4 s v \\
&\quad+\textstyle\frac{3748338689}{19341036}t^4 w_1  -13 t^4 w_2 -\textstyle\frac{3111}{44}t^4 w_3+\textstyle\frac{9205}{879138}t^3 s^3 +\textstyle\frac{81780211}{19341036}t^3 s u_1-\textstyle\frac12t^3 s u_2 +\textstyle\frac{2594261}{7514}t^3 x_1 -\textstyle\frac{19}{2}t^3 x_2\\
&\quad -\textstyle\frac{4007}{1172184}t^2 s^2 v +\textstyle\frac{507563}{165308}t^2 s w_1 -\textstyle\frac{13}{6}t^2 s w_2 -\textstyle\frac{295}{44}t^2 s w_3+\textstyle\frac{990}{17}t^2 y,
\\
f_{14}&=tvw_3-\textstyle\frac{2269121}{7033104}t^9 -\textstyle\frac{130671239}{123079320}t^7 s -\textstyle\frac{30778613}{52748280}t^6 v +\textstyle\frac{2169641}{10549656}t^5 s^2 +\textstyle\frac{21735751}{1465230} t^5 u_1 -\textstyle\frac{176}{45}t^5 u_2 +\textstyle\frac{1002280}{1318707} t^4 s v \\
&\quad+\textstyle\frac{65308993}{879138}t^4 w_1 -\textstyle\frac{11}{3} t^4 w_2 -\textstyle\frac{55}{2}t^4 w_3 -\textstyle\frac{9869}{5274828}t^3 s^3 +\textstyle\frac{205373}{146523}t^3 s u_1 +\textstyle\frac{498421}{3757}t^3 x_1-\textstyle\frac{11}{3} t^3 x_2 -\textstyle\frac{9869}{5274828}t^2 s^2v \\
&\quad+\textstyle\frac{3014}{3757}t^2 s w_1-\textstyle\frac{11}{9}t^2 s w_2 +\textstyle\frac73t^2 s w_3 +\textstyle\frac{165}{17}t^2 y,
\\
f_{15}&=tz+\textstyle\frac{28331}{1107713880}t^9 +\textstyle\frac{1249}{6153966}t^7 s +\textstyle\frac{87019}{61539660}t^6 v -\textstyle\frac{101}{123079320}t^5 s^2 +\textstyle\frac{115473}{13675480}t^5 u_1 -\textstyle\frac{55}{24615864}t^4 s v+\textstyle\frac{388147}{9230949}t^4 w_1 \\
&\quad+\textstyle\frac{1}{18461898}t^3 s^3 -\textstyle\frac{43}{9230949}t^3 s u_1 +\textstyle\frac{26449}{157794}t^3 x_1 +\textstyle\frac{1}{18461898}t^2 s^2 v -\textstyle\frac{1}{157794}t^2 s w_1 +\textstyle\frac{179}{357}t^2 y,
\\
f_{16}&=s^3v+\textstyle\frac{2978}{507}t^9 +\textstyle\frac{10272}{169}t^7 s +\textstyle\frac{32411}{169}t^6 v +\textstyle\frac{8254}{169}t^5 s^2 +\textstyle\frac{48612}{169}t^5 u_1 +\textstyle\frac{6479}{169}t^4 s v-\textstyle\frac{111376}{169}t^4 w_1 -\textstyle\frac{3492}{169}t^3 s^3 \\
&\quad-\textstyle\frac{51884}{169}t^3 s u_1 -\textstyle\frac{4560}{13}t^3 x_1 -\textstyle\frac{8731}{169}t^2 s^2 v -\textstyle\frac{864}{13}t^2 s w_1,
\\
f_{17}&=s^2w_1+\textstyle\frac{423211}{2197845}t^9 +\textstyle\frac{348476}{341887}t^7 s +\textstyle\frac{1288358}{244205}t^6 v -\textstyle\frac{203141}{244205}t^5 s^2 +\textstyle\frac{3244737}{244205}t^5 u_1-\textstyle\frac{199599}{48841}t^4 s v -\textstyle\frac{3397013}{146523}t^4 w_1 \\
&\quad+\textstyle\frac{308}{146523}t^3 s^3 -\textstyle\frac{2370856}{146523}t^3 s u_1 -\textstyle\frac{988500}{3757}t^3 x_1+\textstyle\frac{308}{146523}t^2 s^2 v -\textstyle\frac{158718}{3757}t^2 s w_1-\textstyle\frac{1320}{17}t^2 y,
\\
f_{18}&=s^2w_2-\textstyle\frac{1424473757}{580231080}t^9 -\textstyle\frac{3491695369}{451290840}t^7 s +\textstyle\frac{99305936}{24176295}t^6 v -\textstyle\frac{148345751}{193410360} t^5 s^2+\textstyle\frac{1235838493}{8058765}t^5 u_1 -\textstyle\frac{544}{15}t^5 u_2 \\
&\quad-\textstyle\frac{9534368}{4835259}t^4 s v +\textstyle\frac{3137856266}{4835259}t^4 w_1 -49t^4 w_2 -\textstyle\frac{2610}{11}t^4 w_3 +\textstyle\frac{54913}{390728}t^3 s^3 -\textstyle\frac{38941391}{4835259}t^3 s u_1 -2t^3 s u_2 \\
&\quad+\textstyle\frac{3601952}{3757}t^3 x_1 -34t^3 x_2+\textstyle\frac{55672}{439569}t^2 s^2 v -\textstyle\frac{995606}{41327}t^2 s w_1 -\textstyle\frac{28}{3}t^2 s w_2 -\textstyle\frac{146}{11} t^2 s w_3 -\textstyle\frac{1584}{17}t^2 y,
\\
f_{19}&=s^2w_3-\textstyle\frac{271307}{450840}t^9 -\textstyle\frac{9413891}{4733820}t^7 s -\textstyle\frac{228614}{507195}t^6 v +\textstyle\frac{68117}{4057560}t^5 s^2 +\textstyle\frac{1697401}{56355}t^5 u_1 -\textstyle\frac{352}{45}t^5 u_2+\textstyle\frac{7022}{101439}t^4 s v \\
&\quad+\textstyle\frac{4702732}{33813}t^4 w_1-\textstyle\frac{22}{3}t^4 w_2-55t^4 w_3-\textstyle\frac{193}{101439}t^3 s^3-\textstyle\frac{3838}{3757}t^3 s u_1+\textstyle\frac{63756}{289}t^3 x_1-\textstyle\frac{22}{3}t^3 x_2 -\textstyle\frac{193}{101439}t^2 s^2 v\\
&\quad -\textstyle\frac{1188}{289}t^2 s w_1 -\textstyle\frac{22}{9}t^2 s w_2+\textstyle\frac{14}{3}t^2 s w_3 -\textstyle\frac{264}{17}t^2 y,
\\
f_{20}&=sy-\textstyle\frac{28331}{19780605}t^9 -\textstyle\frac{4996}{439569}t^7 s -\textstyle\frac{174038}{2197845}t^6 v +\textstyle\frac{101}{2197845}t^5 s^2-\textstyle\frac{115473}{244205}t^5 u_1 +\textstyle\frac{55}{439569}t^4 s v -\textstyle\frac{3105176}{1318707}t^4 w_1 \\
&\quad-\textstyle\frac{4}{1318707}t^3 s^3 +\textstyle\frac{344}{1318707}t^3 s u_1 -\textstyle\frac{105796}{11271}t^3 x_1 -\textstyle\frac{4}{1318707}t^2 s^2 v +\textstyle\frac{4}{11271}t^2 s w_1-\textstyle\frac{1279}{51}t^2 y,
\\
f_{21}&=vx_1-\textstyle\frac{198317}{26374140}t^9-\textstyle\frac{8743}{146523}t^7 s -\textstyle\frac{609133}{1465230}t^6 v +\textstyle\frac{707}{2930460}t^5 s^2 -\textstyle\frac{2424933}{976820}t^5 u_1+\textstyle\frac{385}{586092}t^4 s v-\textstyle\frac{5434058}{439569}t^4 w_1\\
&\quad-\textstyle\frac{7}{439569}t^3 s^3 +\textstyle\frac{602}{439569}t^3 s u_1 -\textstyle\frac{185143}{3757}t^3 x_1 -\textstyle\frac{7}{439569} t^2 s^2 v +\textstyle\frac{7}{3757}t^2 s w_1-\textstyle\frac{2251}{17}t^2 y,
\\
f_{22}&=vx_2-\textstyle\frac{1752942221}{2320924320}t^9 -\textstyle\frac{12823165981}{2707745040}t^7 s -\textstyle\frac{27192059569}{1160462160}t^6 v -\textstyle\frac{1693971839}{2320924320}t^5 s^2-\textstyle\frac{590022453}{5372510} t^5 u_1 -\textstyle\frac{391}{180}t^5 u_2\\
&\quad -\textstyle\frac{102946787}{58023108}t^4 s v-\textstyle\frac{2315759090}{4835259}t^4 w_1 +\textstyle\frac{77}{12}t^4 w_2-\textstyle\frac{477}{22}t^4 w_3 +\textstyle\frac{1397087}{21099312}t^3 s^3-\textstyle\frac{35304473}{19341036}t^3 s u_1-\textstyle\frac23t^3 s u_2 \\
&\quad-\textstyle\frac{6613466}{3757}t^3 x_1-\textstyle\frac{49}{12}t^3 x_2 +\textstyle\frac{2569271}{21099312}t^2 s^2 v +\textstyle\frac{95061}{41327}t^2 s w_1 +\textstyle\frac{221}{36}t^2 s w_2 -\textstyle\frac{991}{66}t^2 s w_3-\textstyle\frac{67176}{17} t^2 y,
\\
f_{23}&=u_1w_1-\textstyle\frac{28331}{1861704}t^9-\textstyle\frac{6245}{51714}t^7 s-\textstyle\frac{87019}{103428}t^6 v +\textstyle\frac{101}{206856} t^5 s^2-\textstyle\frac{115473}{22984}t^5 u_1+\textstyle\frac{275}{206856}t^4 s v-\textstyle\frac{1940735}{77571}t^4 w_1 \\
&\quad-\textstyle\frac{5}{155142}t^3 s^3 +\textstyle\frac{215}{77571} t^3 s u_1 -\textstyle\frac{132245}{1326}t^3 x_1 -\textstyle\frac{5}{155142}t^2 s^2 v +\textstyle\frac{5}{1326}t^2 s w_1-\textstyle\frac{805}{3}t^2 y,
\\
f_{24}&=u_1w_2+\textstyle\frac{1696241603}{2320924320}t^9 +\textstyle\frac{3770703661}{1805163360}t^7 s -\textstyle\frac{406638451}{193410360}t^6 v -\textstyle\frac{24948791}{48352590}t^5 s^2-\textstyle\frac{875507521}{16117530}t^5 u_1 +\textstyle\frac{32}{3}t^5 u_2\\
&\quad -\textstyle\frac{77926367}{38682072} t^4 s v -\textstyle\frac{4772084077}{19341036} t^4 w_1 +\textstyle\frac{41}{3}t^4 w_2+\textstyle\frac{3279}{44}t^4 w_3-\textstyle\frac{4129}{390728}t^3 s^3 -\textstyle\frac{79225709}{19341036}t^3 s u_1 +\textstyle\frac12 t^3 s u_2 \\
&\quad-\textstyle\frac{10794811}{22542}t^3 x_1+10t^3 x_2 +\textstyle\frac{1460}{439569}t^2 s^2 v-\textstyle\frac{1338605}{495924}t^2 s w_1 +\textstyle\frac{17}{6}t^2 s w_2 +\textstyle\frac{287}{44}t^2 s w_3-\textstyle\frac{4082}{17} t^2 y,
\\
f_{25}&=u_1w_3+\textstyle\frac{49421519}{158244840}t^9 +\textstyle\frac{244104829}{246158640}t^7 s +\textstyle\frac{232604}{1318707}t^6 v -\textstyle\frac{2701219}{13187070}t^5 s^2 -\textstyle\frac{1638935}{97682}t^5 u_1 +\textstyle\frac{176}{45} t^5 u_2 \\
&\quad-\textstyle\frac{7994645}{10549656}t^4 s v -\textstyle\frac{107533550}{1318707}t^4 w_1 +\textstyle\frac{11}{3}t^4 w_2 +\textstyle\frac{83}{3}t^4 w_3+\textstyle\frac{9583}{5274828}t^3 s^3 -\textstyle\frac{1842208}{1318707} t^3 s u_1-\textstyle\frac{1712623}{11271}t^3 x_1 \\
&\quad+\textstyle\frac{11}{3}t^3 x_2 +\textstyle\frac{9583}{5274828}t^2 s^2 v-\textstyle\frac{17941}{22542}t^2 s w_1 +\textstyle\frac{11}{9}t^2 s w_2-\textstyle\frac{11}{6}t^2 s w_3-\textstyle\frac{1969}{51}t^2 y,
\\
f_{26}&=u_2w_1+\textstyle\frac{10493459}{48352590}t^9 +\textstyle\frac{124941559}{112822710}t^7 s +\textstyle\frac{23738857}{6447012}t^6 v +\textstyle\frac{30171637}{193410360}t^5 s^2 +\textstyle\frac{181494101}{21490040}t^5 u_1 +\textstyle\frac{14}{15}t^5 u_2 \\
&\quad+\textstyle\frac{1710261}{4298008}t^4 s v +\textstyle\frac{179353831}{9670518} t^4 w_1 -\textstyle\frac{10}{3}t^4 w_2+\textstyle\frac{119}{11}t^4 w_3-\textstyle\frac{2101}{439569}t^3 s^3 +\textstyle\frac{2720161}{4835259}t^3 s u_1 +\textstyle\frac{771061}{7514} t^3 x_1 \\
&\quad+2t^3 x_2-\textstyle\frac{2101}{439569} t^2 s^2 v-\textstyle\frac{118517}{123981}t^2 s w_1-\textstyle\frac{10}{3}t^2 s w_2+\textstyle\frac{64}{11}t^2 s w_3+\textstyle\frac{6858}{17}t^2 y,
\\
f_{27}&=u_2w_2-\textstyle\frac{3676123513}{2785109184}t^9-\textstyle\frac{5148236867}{676936260}t^7 s-\textstyle\frac{5587823539}{154728288}t^6 v-\textstyle\frac{2045441273}{1547282880}t^5 s^2-\textstyle\frac{33255409283}{193410360}t^5 u_1-\textstyle\frac{154}{15}t^5 u_2\\
&\quad-\textstyle\frac{24508851}{4298008}t^4 s v-\textstyle\frac{44687560949}{58023108}t^4 w_1-\textstyle\frac{335}{24}t^4 w_2-\textstyle\frac{1509}{22}t^4 w_3-\textstyle\frac{2651825}{42198624}t^3 s^3-\textstyle\frac{1075520785}{58023108}t^3 s u_1-\textstyle\frac{13}{24}t^3 s u_2\\
&\quad-\textstyle\frac{19781025}{7514}t^3 x_1-\textstyle\frac{77}{8}t^3 x_2-\textstyle\frac{4800829}{42198624}t^2 s^2 v-\textstyle\frac{5379211}{165308}t^2 s w_1-\textstyle\frac{77}{24}t^2 s w_2-\textstyle\frac{101}{22}t^2 s w_3-\textstyle\frac{79524}{17}t^2 y,
\\
f_{28}&=u_2w_3-\textstyle\frac{1258556777}{4641848640}t^9-\textstyle\frac{126463971}{75215140}t^7 s-\textstyle\frac{1362469253}{154728288}t^6 v-\textstyle\frac{20957051}{309456576}t^5 s^2-\textstyle\frac{870536637}{21490040}t^5 u_1-\textstyle\frac{14}{15}t^5 u_2\\
&\quad-\textstyle\frac{10413167}{38682072}t^4 s v-\textstyle\frac{3032611687}{19341036}t^4 w_1-\textstyle\frac{7}{8}t^4 w_2-\textstyle\frac{147}{22}t^4 w_3-\textstyle\frac{17645}{14066208}t^3 s^3-\textstyle\frac{34289245}{38682072}t^3 s u_1\\
&\quad-\textstyle\frac{10267753}{22542}t^3 x_1-\textstyle\frac{7}{8}t^3 x_2-\textstyle\frac{17645}{14066208}t^2 s^2 v-\textstyle\frac{818093}{495924}t^2 s w_1-\textstyle\frac{7}{24}t^2 s w_2+\textstyle\frac{7}{22}t^2 s w_3-\textstyle\frac{12606}{17}t^2 y.
\end{align*}
}

\noindent\textbf{Degree $10$.}
{\footnotesize
\begin{align*}
f_{29}&=t^3s^2v-\textstyle\frac{97}{560}t^{10}-\textstyle\frac{171}{112}t^8 s-\textstyle\frac{297}{70}t^7 v-\textstyle\frac{81}{80}t^6 s^2-\textstyle\frac{21}{5}t^6 u_1-\textstyle\frac{3}{10}t^5 s v+\textstyle\frac{112}{5}t^5 w_1+\textstyle\frac{41}{80}t^4 s^3+\textstyle\frac{38}{5}t^4 s u_1,
\\
f_{30}&=t^3 s w_1-\textstyle\frac{113}{17920} t^{10}-\textstyle\frac{491}{17920}t^8 s-\textstyle\frac{59}{480} t^7 v+\textstyle\frac{53}{1536} t^6 s^2-\textstyle\frac{303}{2240}t^6 u_1+\textstyle\frac{171}{1120}t^5 s v+\textstyle\frac{53}{35}t^5 w_1-\textstyle\frac{1}{17920}t^4 s^3\\
&\quad+\textstyle\frac{1129}{2240}t^4 s u_1+\textstyle\frac{55}{7}t^4 x_1,
\\
f_{31}&=t^3 s w_2+\textstyle\frac{7823}{134400}t^{10}+\textstyle\frac{1091}{8960}t^8 s-\textstyle\frac{437}{840}t^7 v-\textstyle\frac{217}{3840}t^6 s^2 -\textstyle\frac{6171}{1120}t^6 u_1 +\textstyle\frac{101}{80}t^6 u_2-\textstyle\frac{3}{14}t^5 s v-\textstyle\frac{1996}{105} t^5 w_1+\textstyle\frac{13}{5}t^5 w_2\\
&\quad+\textstyle\frac{42}{5}t^5 w_3-\textstyle\frac{23}{8960}t^4 s^3-\textstyle\frac{293}{672}t^4 s u_1+\textstyle\frac{9}{80}t^4 s u_2-\textstyle\frac{804}{35}t^4 x_1+\textstyle\frac{9}{10}t^4 x_2,
\\
f_{32}&=t^3 s w_3+\textstyle\frac{3499}{89600}t^{10}+\textstyle\frac{2237}{17920}t^8 s-\textstyle\frac{1}{24}t^7 v+\textstyle\frac{5}{1536}t^6 s^2-\textstyle\frac{16307}{6720}t^6 u_1+\textstyle\frac{253}{480}t^6 u_2-\textstyle\frac{103}{5040}t^5 s v-\textstyle\frac{1154}{105} t^5 w_1+\textstyle\frac{11}{15}t^5 w_2\\
&\quad+\textstyle\frac{16}{5}t^5 w_3-\textstyle\frac{107}{53760} t^4 s^3-\textstyle\frac{451}{6720}t^4 s u_1+\textstyle\frac{11}{160}t^4 s u_2-\textstyle\frac{638}{35}t^4 x_1+\textstyle\frac{11}{20}t^4 x_2,
\\
f_{33}&=t^3 y+\textstyle\frac{769}{6451200}t^{10}+\textstyle\frac{1961}{2150400}t^8 s+\textstyle\frac{271}{44800} t^7 v-\textstyle\frac{1}{307200}t^6 s^2+\textstyle\frac{9061}{268800}t^6 u_1-\textstyle\frac{1}{134400}t^5 s v+\textstyle\frac{423}{2800}t^5 w_1 \\
&\quad+\textstyle\frac{1}{6451200}t^4 s^3-\textstyle\frac{1}{89600}t^4 s u_1+\textstyle\frac{281}{560}t^4 x_1,
\\
f_{34}&=s^5+\textstyle\frac{4699}{28}t^{10}+\textstyle\frac{19701}{28}t^8 s-\textstyle\frac{468}{7} t^7 v-\textstyle\frac{593}{4}t^6 s^2-\textstyle\frac{20718}{7} t^6 u_1-\textstyle\frac{10940}{7}t^5 s v-\textstyle\frac{13696}{7}t^5 w_1-\textstyle\frac{13373}{28} t^4 s^3\\
&\quad-\textstyle\frac{2606}{7}t^4 s u_1-\textstyle\frac{1920}{7} t^4 x_1,
\\
f_{35}&=s^3  u_1+\textstyle\frac{14219}{4032}t^{10}+\textstyle\frac{1109}{2240}t^8 s-\textstyle\frac{5077}{140}t^7 v-\textstyle\frac{13757}{320}t^6 s^2-\textstyle\frac{150531}{280}t^6 u_1-\textstyle\frac{4435}{28}t^5 s v-\textstyle\frac{73704}{35}t^5 w_1+\textstyle\frac{5837}{6720}t^4 s^3\\
&\quad-\textstyle\frac{98571}{280}t^4 s u_1-\textstyle\frac{5808}{7}t^4 x_1,
\\
f_{36}&=s^3  u_2+\textstyle\frac{605559}{11200} t^{10}+\textstyle\frac{2365561}{11200}t^8 s+\textstyle\frac{903953}{4200}t^7 v+\textstyle\frac{89257}{4800}t^6 s^2-\textstyle\frac{1275297}{700}t^6 u_1+\textstyle\frac{24121}{40}t^6 u_2+\textstyle\frac{32163}{1400}t^5 s v\\
&\quad-\textstyle\frac{1903824}{175}t^5 w_1+\textstyle\frac{1548}{5}t^5 w_2+4788t^5 w_3-\textstyle\frac{165853}{11200}t^4 s^3+\textstyle\frac{146421}{700}t^4 s u_1-\textstyle\frac{2667}{40}t^4 s u_2-\textstyle\frac{835284}{35}t^4 x_1+\textstyle\frac{3483}{5}t^4 x_2,
\\
f_{37}&=s^2  x_1-\textstyle\frac{214609}{2419200} t^{10}-\textstyle\frac{55729}{89600}t^8 s-\textstyle\frac{64511}{16800}t^7 v+\textstyle\frac{2107}{38400}t^6 s^2-\textstyle\frac{640487}{33600}t^6 u_1+\textstyle\frac{3547}{16800}t^5 s v-\textstyle\frac{25271}{350}t^5 w_1-\textstyle\frac{187}{806400}t^4 s^3\\
&\quad+\textstyle\frac{5787}{11200}t^4 s u_1-\textstyle\frac{11667}{70}t^4 x_1,
\\
f_{38}&=sv  w_2-\textstyle\frac{6926111}{9676800}t^{10}-\textstyle\frac{3964277}{1075200} t^8 s-\textstyle\frac{615323}{33600} t^7 v+\textstyle\frac{73879}{51200}t^6 s^2-\textstyle\frac{9098347}{134400}t^6 u_1-\textstyle\frac{679}{640}t^6 u_2+\textstyle\frac{25661}{5600}t^5 s v\\
&\quad-\textstyle\frac{861193}{4200}t^5 w_1+\textstyle\frac{17}{4}t^5 w_2-\textstyle\frac{243}{20}t^5 w_3-\textstyle\frac{87677}{3225600}t^4 s^3+\textstyle\frac{1113461}{134400}t^4 s u_1-\textstyle\frac{19}{640} t^4 s u_2-\textstyle\frac{122803}{280} t^4 x_1-\textstyle\frac{99}{80}t^4 x_2,
\\
f_{39}&=sz+\textstyle\frac{1987}{19353600} t^{10}+\textstyle\frac{547}{716800}t^8 s+\textstyle\frac{653}{134400}t^7 v-\textstyle\frac{1}{307200} t^6 s^2+\textstyle\frac{6821}{268800}t^6 u_1-\textstyle\frac{1}{134400} t^5 s v+\textstyle\frac{283}{2800}t^5 w_1\\
&\quad+\textstyle\frac{1}{6451200} t^4 s^3-\textstyle\frac{1}{89600}t^4 s u_1+\textstyle\frac{141}{560}t^4 x_1,
\\
f_{40}&=vy+\textstyle\frac{1987}{3225600} t^{10}+\textstyle\frac{1641}{358400}t^8s+\textstyle\frac{653}{22400}t^7v-\textstyle\frac{1}{51200}t^6s^2+\textstyle\frac{6821}{44800}t^6u_1-\textstyle\frac{1}{22400}t^5sv+\textstyle\frac{849}{1400}t^5w_1+\textstyle\frac{1}{1075200}t^4s^3\\
&\quad-\textstyle\frac{3}{44800}t^4su_1+\textstyle\frac{423}{280}t^4x_1,
\\
f_{41}&=u_1x_1+\textstyle\frac{1987}{1290240}t^{10}+\textstyle\frac{1641}{143360}t^8s+\textstyle\frac{653}{8960}t^7v-\textstyle\frac{1}{20480}t^6s^2+\textstyle\frac{6821}{17920}t^6u_1-\textstyle\frac{1}{8960}t^5sv+\textstyle\frac{849}{560}t^5w_1+\textstyle\frac{1}{430080}t^4s^3\\
&\quad-\textstyle\frac{3}{17920}t^4su_1+\textstyle\frac{423}{112}t^4x_1,
\\
f_{42}&=u_1x_2-\textstyle\frac{24797}{28800}t^{10}-\textstyle\frac{142987}{67200}t^8s+\textstyle\frac{11981}{1800}t^7v+\textstyle\frac{16741}{28800}t^6s^2+\textstyle\frac{799313}{8400}t^6u_1-\textstyle\frac{213}{20}t^6u_2+\textstyle\frac{28171}{12600}t^5sv+\textstyle\frac{454733}{1050}t^5w_1\\
&\quad-\textstyle\frac{88}{15}t^5w_2-66t^5w_3-\textstyle\frac{509}{67200}t^4s^3+\textstyle\frac{29501}{8400}t^4su_1+\textstyle\frac{4}{15}t^4su_2+\textstyle\frac{178733}{210}t^4x_1-\textstyle\frac{737}{60}t^4x_2,
\\
f_{43}&=u_2x_1+\textstyle\frac{1472719}{9676800}t^{10}+\textstyle\frac{170223}{358400}t^8s-\textstyle\frac{12263}{67200}t^7v-\textstyle\frac{7109}{153600}t^6s^2-\textstyle\frac{200273}{19200}t^6u_1+\textstyle\frac{131}{80}t^6u_2-\textstyle\frac{649}{3200}t^5sv-\textstyle\frac{30877}{600}t^5w_1\\
&\quad+\textstyle\frac{4}{5}t^5w_2+\textstyle\frac{49}{5}t^5w_3+\textstyle\frac{49}{153600}t^4s^3-\textstyle\frac{5161}{19200}t^4su_1-\textstyle\frac{1}{80}t^4su_2-\textstyle\frac{4049}{40}t^4x_1+\textstyle\frac{19}{10}t^4x_2,
\\
f_{44}&=u_2x_2+\textstyle\frac{14018959}{1814400}t^{10}+\textstyle\frac{740737}{28800}t^8s+\textstyle\frac{1200169}{151200}t^7v-\textstyle\frac{153107}{86400}t^6s^2-\textstyle\frac{20598827}{50400}t^6u_1+\textstyle\frac{129703}{1440}t^6u_2-\textstyle\frac{1101323}{151200}t^5sv\\
&\quad-\textstyle\frac{1075106}{525}t^5w_1+\textstyle\frac{667}{15}t^5w_2+635 t^5w_3+\textstyle\frac{68219}{604800}t^4s^3-\textstyle\frac{131573}{16800}t^4su_1-\textstyle\frac{2141}{1440}t^4su_2-\textstyle\frac{131847}{35}t^4x_1+\textstyle\frac{1961}{20}t^4x_2,
\\
f_{45}&=w_1^2+\textstyle\frac{1987}{967680}t^{10}+\textstyle\frac{547}{35840}t^8s+\textstyle\frac{653}{6720}t^7v-\textstyle\frac{1}{15360}t^6s^2+\textstyle\frac{6821}{13440}t^6u_1-\textstyle\frac{1}{6720}t^5sv+\textstyle\frac{283}{140}t^5w_1+\textstyle\frac{1}{322560}t^4s^3\\
&\quad-\textstyle\frac{1}{4480}t^4su_1+\textstyle\frac{141}{28}t^4x_1,
\\
f_{46}&=w_1w_2-\textstyle\frac{667913}{4300800}t^{10}-\textstyle\frac{661729}{1433600}t^8s+\textstyle\frac{214337}{806400}t^7v+\textstyle\frac{246481}{1843200}t^6s^2+\textstyle\frac{6013373}{537600}t^6u_1-\textstyle\frac{7411}{3840}t^6u_2+\textstyle\frac{111437}{268800}t^5sv\\
&\quad+\textstyle\frac{2691721}{50400}t^5w_1-\textstyle\frac{41}{60}t^5w_2-\textstyle\frac{629}{40}t^5w_3-\textstyle\frac{17849}{4300800}t^4s^3+\textstyle\frac{813583}{1612800}t^4su_1+\textstyle\frac{187}{1280}t^4su_2+\textstyle\frac{337997}{3360}t^4x_1-\textstyle\frac{333}{160}t^4x_2,
\\
f_{47}&=w_1w_3-\textstyle\frac{2481497}{16588800}t^{10}-\textstyle\frac{6598189}{12902400}t^8s-\textstyle\frac{262457}{806400}t^7v+\textstyle\frac{10301}{204800}t^6s^2+\textstyle\frac{11184631}{1612800}t^6u_1-\textstyle\frac{935}{576}t^6u_2+\textstyle\frac{492167}{2419200}t^5sv\\
&\quad+\textstyle\frac{1828759}{50400}t^5w_1-\textstyle\frac{77}{90}t^5w_2-\textstyle\frac{1369}{120}t^5w_3-\textstyle\frac{12389}{38707200}t^4s^3+\textstyle\frac{473407}{1612800}t^4su_1+\textstyle\frac{11}{960}t^4su_2+\textstyle\frac{224323}{3360}t^4x_1-\textstyle\frac{209}{120}t^4x_2,
\\
f_{48}&=w_2^2+\textstyle\frac{20627}{19353600}t^{10}+\textstyle\frac{202963}{716800}t^8s+\textstyle\frac{1116131}{403200}t^7v+\textstyle\frac{98093}{921600}t^6s^2+\textstyle\frac{4410269}{268800}t^6u_1-\textstyle\frac{737}{960}t^6u_2+\textstyle\frac{45331}{134400}t^5sv\\
&\quad+\textstyle\frac{489001}{8400}t^5w_1-\textstyle\frac{11}{30}t^5w_2-\textstyle\frac{11}{2}t^5w_3-\textstyle\frac{4591}{6451200}t^4s^3+\textstyle\frac{143413}{268800}t^4su_1+\textstyle\frac{11}{960}t^4su_2+\textstyle\frac{55861}{560}t^4x_1-\textstyle\frac{33}{40}t^4x_2,
\\
f_{49}&=w_2w_3-\textstyle\frac{2921}{1105920}t^{10}-\textstyle\frac{14461}{860160}t^8s-\textstyle\frac{14419}{161280}t^7v+\textstyle\frac{7}{40960}t^6s^2-\textstyle\frac{5743}{15360}t^6u_1+\textstyle\frac{1}{2560}t^5sv-\textstyle\frac{179}{160}t^5w_1\\
&\quad-\textstyle\frac{1}{122880}t^4s^3+\textstyle\frac{3}{5120}t^4su_1-\textstyle\frac{179}{96}t^4x_1,
\\
f_{50}&=w_3^2+\textstyle\frac{5159}{8294400}t^{10}+\textstyle\frac{3157}{921600}t^8s+\textstyle\frac{1001}{57600}t^7v-\textstyle\frac{539}{921600}t^6s^2+\textstyle\frac{7777}{115200}t^6u_1-\textstyle\frac{77}{57600}t^5sv+\textstyle\frac{77}{400}t^5w_1+\textstyle\frac{77}{2764800}t^4s^3\\
&\quad-\textstyle\frac{77}{38400}t^4su_1+\textstyle\frac{77}{240}t^4x_1.
\end{align*}
}

\noindent\textbf{Degree $11$.}
{\footnotesize
\begin{align*}
f_{51}&=t^5 s u_2+\textstyle\frac{7439}{48510} t^{11} +\textstyle\frac{10789}{16170} t^9 s+\textstyle\frac{3376}{2695} t^8 v+\textstyle\frac{18}{49} t^7 s^2-\textstyle\frac{34}{77} t^7 u_1+\textstyle\frac37  t^7 u_2 +\textstyle\frac{82}{105} t^6 s v-\textstyle\frac{96}{11} t^6 w_1-\textstyle\frac{96}{11} t^6 w_3,
\\
f_{52}&=vz,
\\
f_{53}&=u_1y,
\\
f_{54}&=u_2y+\textstyle\frac{4979}{110602800} t^{11}+\textstyle\frac{52627}{13825350} t^9 s+\textstyle\frac{25226}{768075} t^8 v-\textstyle\frac{29}{614460} t^7 s^2+\textstyle\frac{13933}{87780} t^7 u_1-\textstyle\frac{29}{438900} t^6 s v+\textstyle\frac{2447}{6270} t^6 w_1+\textstyle\frac16 t^6 w_3,
\\
f_{55}&=w_1x_1,
\\
f_{56}&=w_1x_2-\textstyle\frac{15581}{9216900} t^{11}-\textstyle\frac{1757219}{27650700} t^9 s-\textstyle\frac{1294471}{2513700} t^8 v-\textstyle\frac{6035}{1106028} t^7 s^2-\textstyle\frac{318643}{131670} t^7 u_1+\textstyle\frac{4}{105} t^7 u_2-\textstyle\frac{41879}{3950100} t^6 s v\\
&\quad-\textstyle\frac{17912}{3135} t^6 w_1+\textstyle\frac 1{45}t^6 w_2-\textstyle\frac{262}{165} t^6 w_3,
\\
f_{57}&=w_2x_1+\textstyle\frac{138679}{126403200} t^{11}-\textstyle\frac{82583}{126403200} t^9 s-\textstyle\frac{648539}{15800400} t^8 v-\textstyle\frac{367}{790020} t^7 s^2-\textstyle\frac{137509}{526680} t^7 u_1+\textstyle\frac 1{30}t^7 u_2+\textstyle\frac{5633}{4514400} t^6 s v\\
&\quad-\textstyle\frac{8137}{12540} t^6 w_1-\textstyle\frac{37}{360} t^6 w_2+\textstyle\frac{98}{165} t^6 w_3,
\\
f_{58}&=w_2x_2+\textstyle\frac{361001}{13825350} t^{11}+\textstyle\frac{881611}{4608450} t^9 s+\textstyle\frac{617984}{768075} t^8 v+\textstyle\frac{4376}{51205} t^7 s^2+\textstyle\frac{659159}{263340} t^7 u_1+\textstyle\frac{31}{252} t^7 u_2+\textstyle\frac{693521}{3950100} t^6 s v\\
&\quad+\textstyle\frac{11612}{3135} t^6 w_1+\textstyle\frac{17}{45} t^6 w_2+\textstyle\frac{22}{15} t^6 w_3,
\\
f_{59}&=w_3x_1-\textstyle\frac{19129}{80438400} t^{11}-\textstyle\frac{27149}{8937600} t^9 s-\textstyle\frac{1331623}{60328800} t^8 v-\textstyle\frac{124}{377055} t^7 s^2-\textstyle\frac{13759}{143640} t^7 u_1+\textstyle\frac{11}{630} t^7 u_2+\textstyle\frac{737}{2154600} t^6 s v\\
&\quad-\textstyle\frac{71}{380} t^6 w_1+\textstyle\frac{11}{540} t^6 w_2+\textstyle\frac{91}{360} t^6 w_3,
\\
f_{60}&=w_3x_2+\textstyle\frac{746203}{15800400} t^{11}+\textstyle\frac{4453159}{15800400} t^9 s+\textstyle\frac{3331721}{2633400} t^8 v+\textstyle\frac{1493}{87780} t^7 s^2+\textstyle\frac{143107}{35112} t^7 u_1+\textstyle\frac1{40}t^7 u_2+\textstyle\frac{2971}{94050} t^6 s v\\
&\quad+\textstyle\frac{2023}{285} t^6 w_1+\textstyle\frac{21}{55} t^6 w_3.
\end{align*}
}

\noindent\textbf{Degree $12$.}
{\footnotesize
\begin{align*}
f_{61}&=u_1z,
\\
f_{62}&=u_2z+\textstyle\frac{15671}{55883520} t^{12}+\textstyle\frac{863}{846720} t^{10} s+\textstyle\frac{1493}{635040} t^9 v-\textstyle\frac{263}{5080320} t^8 s^2+\textstyle\frac{31}{15120} t^8 u_1+\textstyle\frac{11}{6048} t^8 u_2,
\\
f_{63}&=w_1y,
\\
f_{64}&=w_2y+\textstyle\frac{31403}{111767040} t^{12}+\textstyle\frac{10663}{8467200} t^{10} s+\textstyle\frac{1033}{254016} t^9 v-\textstyle\frac{299}{10160640} t^8 s^2+\textstyle\frac{31}{4320} t^8 u_1+\textstyle\frac{11}{12096} t^8 u_2,
\\
f_{65}&=w_3y+\textstyle\frac{40631}{243855360} t^{12}+\textstyle\frac{136279}{203212800} t^{10} s+\textstyle\frac{5687}{3048192} t^9 v-\textstyle\frac{6017}{243855360} t^8 s^2+\textstyle\frac{3817}{1451520} t^8 u_1+\textstyle\frac{121}{145152} t^8 u_2,
\\
f_{66}&=x_1^2,
\\
f_{67}&=x_1x_2-\textstyle\frac{15671}{2794176} t^{12}-\textstyle\frac{863}{42336} t^{10} s-\textstyle\frac{1493}{31752} t^9 v+\textstyle\frac{263}{254016} t^8 s^2-\textstyle\frac{31}{756} t^8 u_1-\textstyle\frac{55}{1512} t^8 u_2,
\\
f_{68}&=x_2^2+\textstyle\frac{815827}{3492720} t^{12}+\textstyle\frac{11353}{10584} t^{10} s+\textstyle\frac{13688}{3969} t^9 v-\textstyle\frac{1943}{317520} t^8 s^2+\textstyle\frac{1609}{270} t^8 u_1+\textstyle\frac{356}{945} t^8 u_2.
\end{align*}
}

\noindent\textbf{Degree $13$.}
{\footnotesize
\begin{align*}
f_{69}&=w_1z,
\\
f_{70}&=w_2z-\textstyle\frac{101}{61261200} t^{13}-\textstyle\frac{23}{3534300} t^{11} s-\textstyle\frac1{71400}t^{10} v,
\\
f_{71}&=w_3z-\textstyle\frac{101}{267321600} t^{13}-\textstyle\frac{23}{15422400} t^{11} s-\textstyle\frac{11}{3427200} t^{10} v,
\\
f_{72}&=x_1y,
\\
f_{73}&=x_2y.
\end{align*}
}

\noindent\textbf{Degree $14$.}
{\footnotesize
\begin{align*}
f_{74}&=x_1z,
\\
f_{75}&=x_2z,
\\
f_{76}&=y^2.
\end{align*}
}

\noindent\textbf{Degree $15$.}
{\footnotesize
\begin{align*}
f_{77}&=yz.
\end{align*}
}

\noindent\textbf{Degree $16$.}
{\footnotesize
\begin{align*}
f_{78}&=z^2.
\end{align*}
}


\section{The principal kinematic formula}

\label{app:pkf}

Keeping the notation from Section \ref{s:pkf} and denoting $\mu\odot\nu=\frac12\left(\mu\otimes\nu+\nu\otimes \mu\right)$, we have
{\footnotesize
\begin{align*}
&51609600\, \pi^8\sum_{i,j=1}^{27}(M_{8}^{-1})_{i,j}\psi^{(i)}_{8}\otimes\psi^{(j)}_{8}
\\
&=\textstyle\frac{101042723}{8448} \hat t^8 \odot  \hat t^8 + \textstyle\frac{41912485}{528} \hat t^8 \odot  \hat t^6 \hat s + \textstyle\frac{131398835}{3168} \hat t^8 \odot  \hat t^5 \hat v  -\textstyle\frac{10178935}{792} \hat t^8 \odot  \hat t^4 \hat s^2  -\textstyle\frac{600048805}{528} \hat t^8 \odot  \hat t^4 \hat u_1 
\\
&+ \textstyle\frac{4538995}{16} \hat t^8 \odot  \hat t^4 \hat u_2  -\textstyle\frac{37975765}{792} \hat t^8 \odot  \hat t^3 \hat s \hat v -\textstyle\frac{189207035}{33} \hat t^8 \odot  \hat t^3 \hat w_1 + \textstyle\frac{5826055}{24} \hat t^8 \odot  \hat t^3 \hat w_2 + \textstyle\frac{7976745}{4} \hat t^8 \odot  \hat t^3 \hat w_3
 \\
&+ \textstyle\frac{133415}{352} \hat t^8 \odot  \hat t^2 \hat s^3 -\textstyle\frac{10151365}{132} \hat t^8 \odot  \hat t^2 \hat s \hat u_1+ \textstyle\frac{8215}{8} \hat t^8 \odot  \hat t^2 \hat s \hat u_2  -\textstyle\frac{112512535}{11} \hat t^8 \odot  \hat t^2 \hat x_1+\textstyle\frac{2265005}{8} \hat t^8 \odot  \hat t^2 \hat x_2 
 \\
&  + \textstyle\frac{2835155}{3168} \hat t^8 \odot \hat t \hat s^2 \hat v -\textstyle\frac{79005}{44} \hat t^8 \odot \hat t \hat s \hat w_1 + \textstyle\frac{397455}{8} \hat t^8 \odot  \hat t \hat s \hat w_2 -\textstyle\frac{360555}{11} \hat t^8 \odot \hat  t \hat s \hat w_3  +\textstyle\frac{5465925}{22} \hat t^8 \odot  \hat t \hat y  -\textstyle\frac{190945}{12672} \hat t^8 \odot  \hat s^4
\\
& + \textstyle\frac{163545}{176} \hat t^8 \odot  \hat s^2  \hat u_1 + \textstyle\frac{25135}{48} \hat t^8 \odot  \hat s^2  \hat u_2 +\textstyle\frac{882875}{11} \hat t^8 \odot \hat s  \hat x_1 -9900 \hat t^8 \odot  \hat v  \hat w_2  -\textstyle\frac{661775}{22} \hat t^8 \odot \hat v  \hat w_3 + \textstyle\frac{20210400}{11} \hat t^8 \odot  \hat z 
\\
&+\textstyle\frac{17385985}{132} \hat t^6 \hat s \odot  \hat t^6 \hat s + \textstyle\frac{436979155}{3168} \hat t^6 \hat s \odot  \hat t^5 \hat v  -\textstyle\frac{270203215}{6336} \hat t^6 \hat s \odot  \hat t^4 \hat s^2  -\textstyle\frac{662726705}{176} \hat t^6 \hat s \odot  \hat t^4 \hat u_1 + \textstyle\frac{45172565}{48} \hat t^6 \hat s \odot  \hat t^4 \hat u_2 
\\
& -\textstyle\frac{31560245}{198} \hat t^6 \hat s \odot  \hat t^3 \hat s \hat v -\textstyle\frac{7507001455}{396} \hat t^6 \hat s \odot  \hat t^3 \hat w_1 + \textstyle\frac{6451745}{8} \hat t^6 \hat s \odot  \hat t^3 \hat w_2 + \textstyle\frac{72690555}{11} \hat t^6 \hat s \odot  \hat t^3 \hat w_3 + \textstyle\frac{999775}{792} \hat t^6 \hat s \odot  \hat t^2 \hat s^3 
\\
& -\textstyle\frac{101244185}{396} \hat t^6 \hat s \odot  \hat t^2 \hat s \hat u_1 + \textstyle\frac{27955}{8} \hat t^6 \hat s \odot  \hat t^2 \hat s \hat u_2 -\textstyle\frac{2196730045}{66} \hat t^6 \hat s \odot  \hat t^2 \hat x_1 + \textstyle\frac{7511925}{8} \hat t^6 \hat s \odot  \hat t^2 \hat x_2 + \textstyle\frac{9489955}{3168} \hat t^6 \hat s \odot \hat  t \hat s^2 \hat v 
\\
&+ \textstyle\frac{37990}{9} \hat t^6 \hat s \odot\hat   t \hat s \hat w_1 + \textstyle\frac{4010285}{24} \hat t^6 \hat s \odot \hat  t \hat s \hat w_2  -\textstyle\frac{5102905}{44} \hat t^6 \hat s \odot \hat  t \hat s \hat w_3 + \textstyle\frac{113103315}{22} \hat t^6 \hat s \odot\hat   t \hat y  -\textstyle\frac{315575}{6336} \hat t^6 \hat s \odot  \hat s^4 
\\
&+ \textstyle\frac{5247175}{1584} \hat t^6 \hat s \odot  \hat s^2  \hat u_1 + \textstyle\frac{83515}{48} \hat t^6 \hat s \odot  \hat s^2  \hat u_2 + \textstyle\frac{3741990}{11} \hat t^6 \hat s \odot \hat s  \hat x_1 -\textstyle\frac{93575}{3} \hat t^6 \hat s \odot  \hat v  \hat w_2  -\textstyle\frac{2197925}{22} \hat t^6 \hat s \odot \hat v  \hat w_3
\\
&+ \textstyle\frac{228977280}{11} \hat t^6 \hat s \odot  \hat z +\textstyle\frac{372273293}{9504} \hat t^5 \hat v \odot  \hat t^5 \hat v  -\textstyle\frac{19067905}{864} \hat t^5 \hat v \odot  \hat t^4 \hat s^2 -\textstyle\frac{493836119}{264} \hat t^5 \hat v \odot  \hat t^4 \hat u_1 + \textstyle\frac{35130295}{72} \hat t^5 \hat v \odot  \hat t^4 \hat u_2  
\\
&-\textstyle\frac{66023681}{792} \hat t^5 \hat v \odot  \hat t^3 \hat s \hat v -\textstyle\frac{10474501303}{1188} \hat t^5 \hat v \odot  \hat t^3 \hat w_1 + \textstyle\frac{5095775}{12} \hat t^5 \hat v \odot  \hat t^3 \hat w_2 + \textstyle\frac{148030505}{44} \hat t^5 \hat v \odot  \hat t^3 \hat w_3 + \textstyle\frac{26071}{36} \hat t^5 \hat v \odot  \hat t^2 \hat s^3 
\\
& -\textstyle\frac{139961069}{1188} \hat t^5 \hat v \odot  \hat t^2 \hat s \hat u_1 +\textstyle\frac{24175}{12} \hat t^5 \hat v \odot  \hat t^2 \hat s \hat u_2  -\textstyle\frac{1569950995}{198} \hat t^5 \hat v \odot  \hat t^2 \hat x_1 + \textstyle\frac{1943975}{4} \hat t^5 \hat v \odot  \hat t^2 \hat x_2 + \textstyle\frac{358893}{176} \hat t^5 \hat v \odot \hat  t \hat s^2 \hat v 
\\
&+ \textstyle\frac{236431805}{1188} \hat t^5 \hat v \odot \hat  t \hat s \hat w_1 + \textstyle\frac{416265}{4} \hat t^5 \hat v \odot \hat  t \hat s \hat w_2  -\textstyle\frac{5516905}{44} \hat t^5 \hat v \odot \hat  t \hat s \hat w_3 + \textstyle\frac{646168905}{11} \hat t^5 \hat v \odot \hat  t \hat y  -\textstyle\frac{195559}{9504} \hat t^5 \hat v \odot  \hat s^4 
\\
&+ \textstyle\frac{11259665}{2376} \hat t^5 \hat v \odot  \hat s^2  \hat u_1 + \textstyle\frac{65105}{72} \hat t^5 \hat v \odot  \hat s^2  \hat u_2 + \textstyle\frac{12228860}{11} \hat t^5 \hat v \odot \hat s  \hat x_1 -\textstyle\frac{9665}{9} \hat t^5 \hat v \odot  \hat v  \hat w_2 -\textstyle\frac{1898395}{33} \hat t^5 \hat v \odot \hat v  \hat w_3
\\
&+ \textstyle\frac{2034789120}{11} \hat t^5 \hat v \odot  \hat z +\textstyle\frac{132412835}{38016} \hat t^4 \hat s^2 \odot  \hat t^4 \hat s^2 + \textstyle\frac{108459275}{176} \hat t^4 \hat s^2 \odot  \hat t^4 \hat u_1-\textstyle\frac{21953365}{144} \hat t^4 \hat s^2 \odot  \hat t^4 \hat u_2+ \textstyle\frac{40775785}{1584} \hat t^4 \hat s^2 \odot  \hat t^3 \hat s \hat v
\\
&  +\textstyle\frac{7474927855}{2376} \hat t^4 \hat s^2 \odot  \hat t^3 \hat w_1 -\textstyle\frac{1031415}{8} \hat t^4 \hat s^2 \odot  \hat t^3 \hat w_2  -\textstyle\frac{94810205}{88} \hat t^4 \hat s^2 \odot  \hat t^3 \hat w_3 -\textstyle\frac{313285}{1584} \hat t^4 \hat s^2 \odot  \hat t^2 \hat s^3+ \textstyle\frac{98639645}{2376} \hat t^4 \hat s^2 \odot  \hat t^2 \hat s \hat u_1
\\
&  -\textstyle\frac{6265}{24} \hat t^4 \hat s^2 \odot  \hat t^2 \hat s \hat u_2+\textstyle\frac{2395429675}{396} \hat t^4 \hat s^2 \odot  \hat t^2 \hat x_1 -\textstyle\frac{1223425}{8} \hat t^4 \hat s^2 \odot  \hat t^2 \hat x_2   -\textstyle\frac{1392715}{3168} \hat t^4 \hat s^2 \odot \hat  t \hat s^2 \hat v  +\textstyle\frac{2469905}{216} \hat t^4 \hat s^2 \odot \hat  t \hat s \hat w_1
\\
& -\textstyle\frac{194775}{8} \hat t^4 \hat s^2 \odot \hat  t \hat s \hat w_2  +\textstyle\frac{1053325}{88} \hat t^4 \hat s^2 \odot  \hat t \hat s \hat w_3 + \textstyle\frac{69259275}{22} \hat t^4 \hat s^2 \odot \hat  t \hat y + \textstyle\frac{76115}{9504} \hat t^4 \hat s^2 \odot  \hat s^4  -\textstyle\frac{1319135}{4752} \hat t^4 \hat s^2 \odot  \hat s^2  \hat u_1 
\\
& -\textstyle\frac{38675}{144} \hat t^4 \hat s^2 \odot  \hat s^2  \hat u_2+ \textstyle\frac{172200}{11} \hat t^4 \hat s^2 \odot \hat s  \hat x_1 + \textstyle\frac{106925}{18} \hat t^4 \hat s^2 \odot  \hat v  \hat w_2 + \textstyle\frac{1117375}{66} \hat t^4 \hat s^2 \odot \hat v  \hat w_3+ \textstyle\frac{97070400}{11} \hat t^4 \hat s^2 \odot  \hat z 
\\
&+\textstyle\frac{7302883913}{264} \hat t^4 \hat u_1 \odot  \hat t^4 \hat u_1  -\textstyle\frac{162129715}{12} \hat t^4 \hat u_1 \odot  \hat t^4 \hat u_2 + \textstyle\frac{305659649}{132} \hat t^4 \hat u_1 \odot  \hat t^3 \hat s \hat v+ \textstyle\frac{56608773187}{198} \hat t^4 \hat u_1 \odot  \hat t^3 \hat w_1 
\\
&-\textstyle\frac{69294665}{6} \hat t^4 \hat u_1 \odot  \hat t^3 \hat w_2  -\textstyle\frac{190666245}{2} \hat t^4 \hat u_1 \odot  \hat t^3 \hat w_3  -\textstyle\frac{3367217}{198} \hat t^4 \hat u_1 \odot  \hat t^2 \hat s^3 + \textstyle\frac{796349201}{198} \hat t^4 \hat u_1 \odot  \hat t^2 \hat s \hat u_1  
\\
&-\textstyle\frac{295565}{6} \hat t^4 \hat u_1 \odot  \hat t^2 \hat s \hat u_2 + \textstyle\frac{19501574935}{33} \hat t^4 \hat u_1 \odot  \hat t^2 \hat x_1 -\textstyle\frac{26975625}{2} \hat t^4 \hat u_1 \odot  \hat t^2 \hat x_2  -\textstyle\frac{28250459}{792} \hat t^4 \hat u_1 \odot \hat t \hat s^2 \hat v
\\
& + \textstyle\frac{518106715}{198} \hat t^4 \hat u_1 \odot \hat  t \hat s \hat w_1 -\textstyle\frac{4543815}{2} \hat t^4 \hat u_1 \odot \hat  t \hat s \hat w_2 + \textstyle\frac{2273065}{2} \hat t^4 \hat u_1 \odot \hat  t \hat s \hat w_3 + \textstyle\frac{6008331870}{11} \hat t^4 \hat u_1 \odot \hat  t \hat y + \textstyle\frac{413467}{528} \hat t^4 \hat u_1 \odot  \hat s^4  
\\
&-\textstyle\frac{5377685}{396} \hat t^4 \hat u_1 \odot  \hat s^2  \hat u_1  -\textstyle\frac{99455}{4} \hat t^4 \hat u_1 \odot  \hat s^2  \hat u_2  +\textstyle\frac{57302840}{11} \hat t^4 \hat u_1 \odot \hat s  \hat x_1+ \textstyle\frac{1667170}{3} \hat t^4 \hat u_1 \odot  \hat v  \hat w_2+ \textstyle\frac{15188410}{11} \hat t^4 \hat u_1 \odot \hat v  \hat w_3
\\
&+ \textstyle\frac{16174529280}{11} \hat t^4 \hat u_1 \odot  \hat z +\textstyle\frac{40431875}{24} \hat t^4 \hat u_2 \odot  \hat t^4 \hat u_2 -\textstyle\frac{6793615}{12} \hat t^4 \hat u_2 \odot  \hat t^3 \hat s \hat v -\textstyle\frac{1226983165}{18} \hat t^4 \hat u_2 \odot  \hat t^3 \hat w_1
\\
&+ \textstyle\frac{17297225}{6} \hat t^4 \hat u_2 \odot  \hat t^3 \hat w_2 + \textstyle\frac{47471025}{2} \hat t^4 \hat u_2 \odot  \hat t^3 \hat w_3+ \textstyle\frac{40190}{9} \hat t^4 \hat u_2 \odot  \hat t^2 \hat s^3  -\textstyle\frac{16143095}{18} \hat t^4 \hat u_2 \odot  \hat t^2 \hat s \hat u_1 + \textstyle\frac{74525}{6} \hat t^4 \hat u_2 \odot  \hat t^2 \hat s \hat u_2
\\
&  -\textstyle\frac{366421225}{3} \hat t^4 \hat u_2 \odot  \hat t^2 \hat x_1 + \textstyle\frac{6726225}{2} \hat t^4 \hat u_2 \odot  \hat t^2 \hat x_2 + \textstyle\frac{761065}{72} \hat t^4 \hat u_2 \odot \hat  t \hat s^2 \hat v + \textstyle\frac{307835}{18} \hat t^4 \hat u_2 \odot\hat   t \hat s \hat w_1 + \textstyle\frac{1149775}{2} \hat t^4 \hat u_2 \odot \hat  t \hat s \hat w_2  
\\
&-\textstyle\frac{655525}{2} \hat t^4 \hat u_2 \odot \hat  t \hat s \hat w_3 -2063250 \hat t^4 \hat u_2 \odot \hat  t \hat y -\textstyle\frac{25745}{144} \hat t^4 \hat u_2 \odot  \hat s^4 + \textstyle\frac{388055}{36} \hat t^4 \hat u_2 \odot  \hat s^2  \hat u_1 + \textstyle\frac{74525}{12} \hat t^4 \hat u_2 \odot  \hat s^2  \hat u_2 
\\
&+ 917000 \hat t^4 \hat u_2 \odot \hat s  \hat x_1 -\textstyle\frac{395450}{3} \hat t^4 \hat u_2 \odot  \hat v  \hat w_2  -355350 \hat t^4 \hat u_2 \odot \hat v  \hat w_3+\textstyle\frac{14256331}{297} \hat t^3 \hat s \hat v \odot  \hat t^3 \hat s \hat v+ \textstyle\frac{108352963}{9} \hat t^3 \hat s \hat v \odot  \hat t^3 \hat w_1 
\\
& -\textstyle\frac{2819225}{6} \hat t^3 \hat s \hat v \odot  \hat t^3 \hat w_2  -\textstyle\frac{132191920}{33} \hat t^3 \hat s \hat v \odot  \hat t^3 \hat w_3 -\textstyle\frac{222529}{297} \hat t^3 \hat s \hat v \odot  \hat t^2 \hat s^3 + \textstyle\frac{16188959}{99} \hat t^3 \hat s \hat v \odot  \hat t^2 \hat s \hat u_1  -\textstyle\frac{1675}{6} \hat t^3 \hat s \hat v \odot  \hat t^2 \hat s \hat u_2  
\\
&+\textstyle\frac{275442210}{11} \hat t^3 \hat s \hat v \odot  \hat t^2 \hat x_1  -\textstyle\frac{3412525}{6} \hat t^3 \hat s \hat v \odot  \hat t^2 \hat x_2  -\textstyle\frac{3443051}{2376} \hat t^3 \hat s \hat v \odot \hat  t \hat s^2 \hat v  +\textstyle\frac{10790080}{99} \hat t^3 \hat s \hat v \odot \hat  t \hat s \hat w_1  -\textstyle\frac{1582885}{18} \hat t^3 \hat s \hat v \odot \hat  t \hat s \hat w_2 
\\
&+ \textstyle\frac{1764145}{33} \hat t^3 \hat s \hat v \odot\hat   t \hat s \hat w_3 + \textstyle\frac{254846310}{11} \hat t^3 \hat s \hat v \odot \hat  t \hat y + \textstyle\frac{52823}{1584} \hat t^3 \hat s \hat v \odot  \hat s^4  -\textstyle\frac{127885}{396} \hat t^3 \hat s \hat v \odot  \hat s^2  \hat u_1  -\textstyle\frac{12385}{12} \hat t^3 \hat s \hat v \odot  \hat s^2  \hat u_2  
\\
&+\textstyle\frac{7738360}{33} \hat t^3 \hat s \hat v \odot  \hat s  \hat x_1+ \textstyle\frac{68660}{3} \hat t^3 \hat s \hat v \odot  \hat v  \hat w_2 + \textstyle\frac{2074690}{33} \hat t^3 \hat s \hat v \odot \hat v  \hat w_3+ \textstyle\frac{629475840}{11} \hat t^3 \hat s \hat v \odot  \hat z +\textstyle\frac{222457237834}{297} \hat t^3 \hat w_1 \odot  \hat t^3 \hat w_1  
\\
&-\textstyle\frac{525452755}{9} \hat t^3 \hat w_1 \odot  \hat t^3 \hat w_2  -\textstyle\frac{5283356470}{11} \hat t^3 \hat w_1 \odot  \hat t^3 \hat w_3 -\textstyle\frac{23603398}{297} \hat t^3 \hat w_1 \odot  \hat t^2 \hat s^3+ \textstyle\frac{6703353544}{297} \hat t^3 \hat w_1 \odot  \hat t^2 \hat s \hat u_1  
\\
&-\textstyle\frac{763135}{3} \hat t^3 \hat w_1 \odot  \hat t^2 \hat s \hat u_2 + \textstyle\frac{327601805680}{99} \hat t^3 \hat w_1 \odot  \hat t^2 \hat x_1  -\textstyle\frac{204106025}{3} \hat t^3 \hat w_1 \odot  \hat t^2 \hat x_2   -\textstyle\frac{58823117}{396} \hat t^3 \hat w_1 \odot \hat  t \hat s^2 \hat v 
\\
&+ \textstyle\frac{6633686990}{297} \hat t^3 \hat w_1 \odot \hat  t \hat s \hat w_1 -\textstyle\frac{105045545}{9} \hat t^3 \hat w_1 \odot\hat   t \hat s \hat w_2 + \textstyle\frac{210155900}{33} \hat t^3 \hat w_1 \odot \hat  t \hat s \hat w_3 + \textstyle\frac{43860881220}{11} \hat t^3 \hat w_1 \odot \hat  t \hat y
\\
&+ \textstyle\frac{9913909}{2376} \hat t^3 \hat w_1 \odot  \hat s^4  -\textstyle\frac{1418575}{594} \hat t^3 \hat w_1 \odot  \hat s^2  \hat u_1  -\textstyle\frac{2256995}{18} \hat t^3 \hat w_1 \odot  \hat s^2  \hat u_2 + \textstyle\frac{1412144720}{33} \hat t^3 \hat w_1 \odot \hat s  \hat x_1+ \textstyle\frac{23670160}{9} \hat t^3 \hat w_1 \odot  \hat v  \hat w_2 
\\
&+\textstyle\frac{230312380}{33} \hat t^3 \hat w_1 \odot \hat v  \hat w_3+\textstyle\frac{104970055680}{11} \hat t^3 \hat w_1 \odot  \hat z +\textstyle\frac{7244875}{6} \hat t^3 \hat w_2 \odot   \hat t^3 \hat w_2 + 20489175 \hat t^3 \hat w_2 \odot   \hat t^3 \hat w_3 + \textstyle\frac{34565}{9} \hat t^3 \hat w_2 \odot   \hat t^2 \hat s^3 
\\
& -\textstyle\frac{6291365}{9} \hat t^3 \hat w_2 \odot   \hat t^2 \hat s \hat u_1 + \textstyle\frac{24475}{3} \hat t^3 \hat w_2 \odot   \hat t^2 \hat s \hat u_2  -\textstyle\frac{314478350}{3} \hat t^3 \hat w_2 \odot   \hat t^2 \hat x_1 + 2880075 \hat t^3 \hat w_2 \odot   \hat t^2 \hat x_2   +\textstyle\frac{323075}{36} \hat t^3 \hat w_2 \odot  \hat  t \hat s^2 \hat v 
\\
&+ \textstyle\frac{1721765}{9} \hat t^3 \hat w_2 \odot   \hat t \hat s \hat w_1 + 437525 \hat t^3 \hat w_2 \odot \hat   t \hat s \hat w_2  -149075 \hat t^3 \hat w_2 \odot \hat   t \hat s \hat w_3  -2223900 \hat t^3 \hat w_2 \odot \hat   t \hat y  -\textstyle\frac{1235}{8} \hat t^3 \hat w_2 \odot   \hat s^4 
\\
&+ \textstyle\frac{165245}{18} \hat t^3 \hat w_2 \odot   \hat s^2  \hat u_1 + \textstyle\frac{10725}{2} \hat t^3 \hat w_2 \odot   \hat s^2  \hat u_2 + 988400 \hat t^3 \hat w_2 \odot  \hat s  \hat x_1 -\textstyle\frac{467500}{3} \hat t^3 \hat w_2 \odot   \hat v  \hat w_2  -300500 \hat t^3 \hat w_2 \odot  \hat v  \hat w_3
\\
&+\textstyle\frac{918940050}{11} \hat t^3 \hat w_3 \odot   \hat t^3 \hat w_3 + \textstyle\frac{93100}{3} \hat t^3 \hat w_3 \odot   \hat t^2 \hat s^3 -\textstyle\frac{70189910}{11} \hat t^3 \hat w_3 \odot   \hat t^2 \hat s \hat u_1 + 98525 \hat t^3 \hat w_3 \odot   \hat t^2 \hat s \hat u_2 
\\
& -860220900 \hat t^3 \hat w_3 \odot   \hat t^2 \hat x_1 + 23687475 \hat t^3 \hat w_3 \odot   \hat t^2 \hat x_2  + \textstyle\frac{9783655}{132} \hat t^3 \hat w_3 \odot  \hat  t \hat s^2 \hat v + \textstyle\frac{103320}{11} \hat t^3 \hat w_3 \odot  \hat  t \hat s \hat w_1
\\
& + 4089225 \hat t^3 \hat w_3 \odot  \hat  t \hat s \hat w_2  -\textstyle\frac{23682750}{11} \hat t^3 \hat w_3 \odot  \hat  t \hat s \hat w_3  -\textstyle\frac{158703300}{11} \hat t^3 \hat w_3 \odot   \hat t \hat y  -\textstyle\frac{330505}{264} \hat t^3 \hat w_3 \odot   \hat s^4 + \textstyle\frac{1660785}{22} \hat t^3 \hat w_3 \odot   \hat s^2  \hat u_1
\\
& + \textstyle\frac{86975}{2} \hat t^3 \hat w_3 \odot   \hat s^2  \hat u_2 + \textstyle\frac{70534800}{11} \hat t^3 \hat w_3 \odot  \hat s  \hat x_1 -919800 \hat t^3 \hat w_3 \odot   \hat v  \hat w_2  -\textstyle\frac{27461700}{11} \hat t^3 \hat w_3 \odot  \hat v  \hat w_3+\textstyle\frac{26489}{4752} \hat t^2 \hat s^3 \odot    \hat t^2 \hat s^3
\\
& -\textstyle\frac{296414}{297} \hat t^2 \hat s^3 \odot    \hat t^2 \hat s \hat u_1 + \textstyle\frac{200}{9} \hat t^2 \hat s^3 \odot    \hat t^2 \hat s \hat u_2  -\textstyle\frac{7172180}{99} \hat t^2 \hat s^3 \odot    \hat t^2 \hat x_1 + \textstyle\frac{13375}{3} \hat t^2 \hat s^3 \odot    \hat t^2 \hat x_2+ \textstyle\frac{36571}{1188} \hat t^2 \hat s^3 \odot \hat   t \hat s^2 \hat v 
\\
&+ \textstyle\frac{657680}{297} \hat t^2 \hat s^3 \odot   \hat t \hat s \hat w_1 + \textstyle\frac{8395}{9} \hat t^2 \hat s^3 \odot  \hat  t \hat s \hat w_2  -\textstyle\frac{34750}{33} \hat t^2 \hat s^3 \odot  \hat  t \hat s \hat w_3 + \textstyle\frac{4812180}{11} \hat t^2 \hat s^3 \odot   \hat t \hat y  -\textstyle\frac{359}{1056} \hat t^2 \hat s^3 \odot    \hat s^4  
\\
&+\textstyle\frac{37385}{594} \hat t^2 \hat s^3 \odot    \hat s^2  \hat u_1 + 15 \hat t^2 \hat s^3 \odot    \hat s^2  \hat u_2 + 8260 \hat t^2 \hat s^3 \odot   \hat s  \hat x_1+ \textstyle\frac{40}{9} \hat t^2 \hat s^3 \odot    \hat v  \hat w_2  -\textstyle\frac{19780}{33} \hat t^2 \hat s^3 \odot   \hat v  \hat w_3+ \textstyle\frac{10926720}{11} \hat t^2 \hat s^3 \odot    \hat z 
\\
&+\textstyle\frac{54046126}{297} \hat t^2 \hat s \hat u_1 \odot    \hat t^2 \hat s \hat u_1 +3905 \hat t^2 \hat s \hat u_1 \odot    \hat t^2 \hat s \hat u_2 +\textstyle\frac{6185875640}{99} \hat t^2 \hat s \hat u_1 \odot    \hat t^2 \hat x_1  -\textstyle\frac{2724175}{3} \hat t^2 \hat s \hat u_1 \odot    \hat t^2 \hat x_2  
\\
&-\textstyle\frac{94211}{396} \hat t^2 \hat s \hat u_1 \odot  \hat  t \hat s^2 \hat v + \textstyle\frac{222580090}{297} \hat t^2 \hat s \hat u_1 \odot   \hat t \hat s \hat w_1  -\textstyle\frac{1083055}{9} \hat t^2 \hat s \hat u_1 \odot   \hat t \hat s \hat w_2 + \textstyle\frac{3730420}{33} \hat t^2 \hat s \hat u_1 \odot  \hat  t \hat s \hat w_3 
\\
&+ \textstyle\frac{1463127060}{11} \hat t^2 \hat s \hat u_1 \odot  \hat  t \hat y + \textstyle\frac{178397}{2376} \hat t^2 \hat s \hat u_1 \odot    \hat s^4 + \textstyle\frac{3233635}{594} \hat t^2 \hat s \hat u_1 \odot    \hat s^2  \hat u_1  -\textstyle\frac{30865}{18} \hat t^2 \hat s \hat u_1 \odot    \hat s^2  \hat u_2 + \textstyle\frac{59595760}{33} \hat t^2 \hat s \hat u_1 \odot   \hat s  \hat x_1
\\
&+ \textstyle\frac{374480}{9} \hat t^2 \hat s \hat u_1 \odot    \hat v  \hat w_2 + \textstyle\frac{3496940}{33} \hat t^2 \hat s \hat u_1 \odot   \hat v  \hat w_3+ \textstyle\frac{3213181440}{11} \hat t^2 \hat s \hat u_1 \odot    \hat z+\textstyle\frac{275}{6} \hat t^2 \hat s \hat u_2 \odot   \hat t^2 \hat s \hat u_2  -473750 \hat t^2 \hat s \hat u_2 \odot   \hat t^2 \hat x_1 
\\
&+ 12375 \hat t^2 \hat s \hat u_2 \odot   \hat t^2 \hat x_2  + \textstyle\frac{725}{12} \hat t^2 \hat s \hat u_2 \odot  \hat t \hat s^2 \hat v + \textstyle\frac{54365}{3} \hat t^2 \hat s \hat u_2 \odot \hat  t \hat s \hat w_1  -\textstyle\frac{8525}{3} \hat t^2 \hat s \hat u_2 \odot  \hat t \hat s \hat w_2 +11275 \hat t^2 \hat s \hat u_2 \odot \hat  t \hat s \hat w_3  
\\
&-56700 \hat t^2 \hat s \hat u_2 \odot  \hat t \hat y  -\textstyle\frac{95}{72} \hat t^2 \hat s \hat u_2 \odot   \hat s^4 + \textstyle\frac{395}{6} \hat t^2 \hat s \hat u_2 \odot   \hat s^2  \hat u_1 + \textstyle\frac{275}{6} \hat t^2 \hat s \hat u_2 \odot   \hat s^2  \hat u_2 + 25200 \hat t^2 \hat s \hat u_2 \odot  \hat s  \hat x_1 -5500 \hat t^2 \hat s \hat u_2 \odot   \hat v  \hat w_2 
\\
&+ 1500 \hat t^2 \hat s \hat u_2 \odot  \hat v  \hat w_3+\textstyle\frac{14477882600}{3} \hat t^2 \hat x_1 \odot   \hat t^2 \hat x_1  -121877050 \hat t^2 \hat x_1 \odot   \hat t^2 \hat x_2 +\textstyle\frac{3463615}{66} \hat t^2 \hat x_1 \odot  \hat t \hat s^2 \hat v 
\\
&+ \textstyle\frac{13225952500}{99} \hat t^2 \hat x_1 \odot \hat  t \hat s \hat w_1 -\textstyle\frac{63199450}{3} \hat t^2 \hat x_1 \odot \hat  t \hat s \hat w_2 + 11526800 \hat t^2 \hat x_1 \odot \hat  t \hat s \hat w_3 + \textstyle\frac{242526403800}{11} \hat t^2 \hat x_1 \odot \hat  t \hat y 
\\
&+ \textstyle\frac{3826345}{396} \hat t^2 \hat x_1 \odot   \hat s^4 + \textstyle\frac{6844375}{9} \hat t^2 \hat x_1 \odot   \hat s^2  \hat u_1  -\textstyle\frac{673175}{3} \hat t^2 \hat x_1 \odot   \hat s^2  \hat u_2 + \textstyle\frac{3086557600}{11} \hat t^2 \hat x_1 \odot  \hat s  \hat x_1+ \textstyle\frac{13851200}{3} \hat t^2 \hat x_1 \odot   \hat v  \hat w_2 
\\
&+12442600 \hat t^2 \hat x_1 \odot  \hat v  \hat w_3+ \textstyle\frac{522924595200}{11} \hat t^2 \hat x_1 \odot   \hat z  +\textstyle\frac{3356925}{2} \hat t^2 \hat x_2 \odot    \hat t^2 \hat x_2  + \textstyle\frac{126575}{12} \hat t^2 \hat x_2 \odot  \hat  t \hat s^2 \hat v -\textstyle\frac{37325}{3} \hat t^2 \hat x_2 \odot \hat   t \hat s \hat w_1
\\
&+ 578875 \hat t^2 \hat x_2 \odot \hat   t \hat s \hat w_2 -325875 \hat t^2 \hat x_2 \odot  \hat  t \hat s \hat w_3 -1984500 \hat t^2 \hat x_2 \odot \hat   t \hat y -\textstyle\frac{1425}{8} \hat t^2 \hat x_2 \odot    \hat s^4+ \textstyle\frac{64525}{6} \hat t^2 \hat x_2 \odot    \hat s^2  \hat u_1 
\\
&+ \textstyle\frac{12375}{2} \hat t^2 \hat x_2 \odot    \hat s^2  \hat u_2 + 882000 \hat t^2 \hat x_2 \odot   \hat s  \hat x_1 -126500 \hat t^2 \hat x_2 \odot    \hat v  \hat w_2  -343500 \hat t^2 \hat x_2 \odot   \hat v  \hat w_3+\textstyle\frac{532517}{9504}\hat t \hat s^2 \hat v \odot   \hat t \hat s^2 \hat v 
\\
&+\textstyle\frac{5089175}{396}\hat t \hat s^2 \hat v \odot \hat   t \hat s \hat w_1 +\textstyle\frac{68795}{36}\hat t \hat s^2 \hat v \odot \hat   t \hat s \hat w_2  -\textstyle\frac{217055}{132}\hat t \hat s^2 \hat v \odot  \hat  t \hat s \hat w_3 + \textstyle\frac{19873065}{11}\hat t \hat s^2 \hat v \odot \hat  t \hat y  -\textstyle\frac{4633}{9504}\hat t \hat s^2 \hat v \odot    \hat s^4 
\\
&+ \textstyle\frac{166205}{792}\hat t \hat s^2 \hat v \odot    \hat s^2  \hat u_1 + \textstyle\frac{2735}{72}\hat t \hat s^2 \hat v \odot    \hat s^2  \hat u_2 +\textstyle\frac{1034740}{33}\hat t \hat s^2 \hat v \odot   \hat s  \hat x_1-\textstyle\frac{935}{3}\hat t \hat s^2 \hat v \odot    \hat v  \hat w_2 -\textstyle\frac{38915}{33}\hat t \hat s^2 \hat v \odot   \hat v  \hat w_3
\\
&+ \textstyle\frac{39432960}{11}\hat t \hat s^2 \hat v \odot    \hat z +\textstyle\frac{502001260}{297}\hat t \hat s \hat w_1 \odot  \hat  t \hat s \hat w_1 + \textstyle\frac{590395}{9}\hat t \hat s \hat w_1 \odot \hat   t \hat s \hat w_2 + \textstyle\frac{4781930}{33}\hat t \hat s \hat w_1 \odot \hat   t \hat s \hat w_3 + \textstyle\frac{5760977100}{11}\hat t \hat s \hat w_1 \odot  \hat  t \hat y
\\
&+ \textstyle\frac{254365}{2376}\hat t \hat s \hat w_1 \odot    \hat s^4 + \textstyle\frac{1659235}{54}\hat t \hat s \hat w_1 \odot    \hat s^2  \hat u_1 -\textstyle\frac{3995}{18}\hat t \hat s \hat w_1 \odot    \hat s^2  \hat u_2 + \textstyle\frac{245058800}{33}\hat t \hat s \hat w_1 \odot   \hat s  \hat x_1+ \textstyle\frac{59800}{9}\hat t \hat s \hat w_1 \odot    \hat v  \hat w_2 
\\
&+ \textstyle\frac{876100}{33}\hat t \hat s \hat w_1 \odot   \hat v  \hat w_3+ \textstyle\frac{11292825600}{11}\hat t \hat s \hat w_1 \odot    \hat z +\textstyle\frac{241175}{6}\hat t \hat s \hat w_2 \odot \hat   t \hat s \hat w_2  -129025\hat t \hat s \hat w_2 \odot\hat    t \hat s \hat w_3 -434700\hat t \hat s \hat w_2 \odot \hat   t \hat y 
\\
&-\textstyle\frac{2375}{72}\hat t \hat s \hat w_2 \odot    \hat s^4+ \textstyle\frac{35785}{18}\hat t \hat s \hat w_2 \odot    \hat s^2  \hat u_1 +\textstyle\frac{6875}{6}\hat t \hat s \hat w_2 \odot    \hat s^2  \hat u_2+193200\hat t \hat s \hat w_2 \odot   \hat s  \hat x_1 -\textstyle\frac{73700}{3}\hat t \hat s \hat w_2 \odot    \hat v  \hat w_2 
\\
& -94100\hat t \hat s \hat w_2 \odot   \hat v  \hat w_3+\textstyle\frac{3656700}{11}\hat t \hat s \hat w_3 \odot \hat   t \hat s \hat w_3  -\textstyle\frac{4932900}{11}\hat t \hat s \hat w_3 \odot  \hat  t \hat y + \textstyle\frac{6365}{264}\hat t \hat s \hat w_3 \odot    \hat s^4  -\textstyle\frac{114535}{66}\hat t \hat s \hat w_3 \odot    \hat s^2  \hat u_1  
\\
&-\textstyle\frac{1675}{2}\hat t \hat s \hat w_3 \odot    \hat s^2  \hat u_2 +\textstyle\frac{2192400}{11}\hat t \hat s \hat w_3 \odot   \hat s  \hat x_1 -75200\hat t \hat s \hat w_3 \odot    \hat v  \hat w_2 + \textstyle\frac{2222700}{11}\hat t \hat s \hat w_3 \odot   \hat v  \hat w_3+\textstyle\frac{457398003600}{11}\hat t \hat y \odot \hat   t \hat y 
\\
&+ \textstyle\frac{252315}{22}\hat t \hat y \odot    \hat s^4 + \textstyle\frac{44199750}{11}\hat t \hat y \odot    \hat s^2  \hat u_1  -3150\hat t \hat y \odot    \hat s^2  \hat u_2 + \textstyle\frac{11863958400}{11}\hat t \hat y \odot   \hat s  \hat x_1+ 126000\hat t \hat y \odot    \hat v  \hat w_2 + \textstyle\frac{1512000}{11}\hat t \hat y \odot   \hat v  \hat w_3
\\
&+ \textstyle\frac{1784556748800}{11}\hat t \hat y \odot    \hat z +\textstyle\frac{1109}{76032} \hat s^4 \odot    \hat s^4 -\textstyle\frac{365}{4752} \hat s^4 \odot    \hat s^2  \hat u_1  -\textstyle\frac{95}{144} \hat s^4 \odot    \hat s^2  \hat u_2 + \textstyle\frac{4445}{33} \hat s^4 \odot   \hat s  \hat x_1+ \textstyle\frac{95}{18} \hat s^4 \odot    \hat v  \hat w_2 
\\
&+\textstyle\frac{1235}{66} \hat s^4 \odot   \hat v  \hat w_3+ \textstyle\frac{211680}{11} \hat s^4 \odot    \hat z +\textstyle\frac{408305}{2376} \hat s^2  \hat u_1 \odot    \hat s^2  \hat u_1 + \textstyle\frac{1465}{36} \hat s^2  \hat u_1 \odot    \hat s^2  \hat u_2 +\textstyle\frac{1913800}{33} \hat s^2  \hat u_1 \odot   \hat s  \hat x_1-\textstyle\frac{2650}{9} \hat s^2  \hat u_1 \odot    \hat v  \hat w_2 
\\
&-\textstyle\frac{38650}{33} \hat s^2  \hat u_1 \odot   \hat v  \hat w_3+ \textstyle\frac{76204800}{11} \hat s^2  \hat u_1 \odot    \hat z +\textstyle\frac{275}{24} \hat s^2  \hat u_2 \odot    \hat s^2  \hat u_2 +1400 \hat s^2  \hat u_2 \odot   \hat s  \hat x_1 -\textstyle\frac{550}{3} \hat s^2  \hat u_2 \odot    \hat v  \hat w_2 -650 \hat s^2  \hat u_2 \odot   \hat v  \hat w_3
\\
&+\textstyle\frac{76574400}{11}\hat s  \hat x_1 \odot   \hat s  \hat x_1 -56000\hat s  \hat x_1 \odot    \hat v  \hat w_2  -\textstyle\frac{672000}{11}\hat s  \hat x_1 \odot   \hat v  \hat w_3+ \textstyle\frac{20727705600}{11}\hat s  \hat x_1 \odot    \hat z +\textstyle\frac{17600}{3}\hat v  \hat w_2 \odot     \hat v  \hat w_2 
\\
&-400\hat v  \hat w_2 \odot    \hat v  \hat w_3+\textstyle\frac{433200}{11}\hat v  \hat w_3 \odot    \hat v  \hat w_3+\textstyle\frac{1575305625600}{11}\hat z \odot     \hat z.
\end{align*}
}

\bibliography{ref_books,ref_papers}
\bibliographystyle{abbrv}

\end{document}